\documentclass{amsart}
\usepackage[letterpaper,top=3.5cm,bottom=3.5cm,left=3.5cm,right=3.5cm]{geometry}

\usepackage[foot]{amsaddr}
\RequirePackage{amsthm,amsmath,amsfonts,amssymb}
\RequirePackage[numbers]{natbib}
\RequirePackage[colorlinks,linkcolor=,citecolor=blue,urlcolor=blue]{hyperref}
\RequirePackage{graphicx}

\usepackage{mathtools}
\usepackage{xcolor}
\usepackage{hyperref}
\usepackage[capitalize]{cleveref}
\usepackage{physics}
\usepackage{enumerate}

\theoremstyle{plain}
\newtheorem{theorem}{Theorem}
\newtheorem{assumption}{Assumption}
\newtheorem{corollary}{Corollary}
\newtheorem{definition}{Definition}
\newtheorem{proposition}{Proposition}
\newtheorem{itheorem}{Informal Theorem}
\newtheorem{lemma}{Lemma}
\newtheorem*{lemma*}{Lemma}
\newtheorem*{theorem*}{Theorem}
\newtheorem*{proposition*}{Proposition}
\newtheorem{remark}{Remark}
\newtheorem{condition}{Condition}
\crefname{assumption}{assumption}{assumptions}
\crefname{condition}{condition}{conditions}

\usepackage{amsmath,amsfonts,bm}

\def\eqref#1{equation~\ref{#1}}

\def\1{\bm{1}}

\def\rva{{\mathbf{a}}}
\def\rvb{{\mathbf{b}}}

\def\rve{{\mathbf{e}}}

\def\rvg{{\mathbf{g}}}
\def\rvh{{\mathbf{h}}}

\def\rvx{{\mathbf{x}}}
\def\rvy{{\mathbf{y}}}

\def\rmH{{\mathbf{H}}}

\def\rmM{{\mathbf{M}}}

\def\rmX{{\mathbf{X}}}
\def\rmY{{\mathbf{Y}}}
\def\rmZ{{\mathbf{Z}}}

\DeclareMathAlphabet{\mathsfit}{\encodingdefault}{\sfdefault}{m}{sl}
\SetMathAlphabet{\mathsfit}{bold}{\encodingdefault}{\sfdefault}{bx}{n}
\newcommand{\tens}[1]{\bm{\mathsfit{#1}}}

\def\tG{{\tens{G}}}

\def\gC{{\mathcal{C}}}

\def\gE{{\mathcal{E}}}

\def\gJ{{\mathcal{J}}}

\def\gL{{\mathcal{L}}}

\def\gN{{\mathcal{N}}}

\def\gU{{\mathcal{U}}}
\def\gV{{\mathcal{V}}}

\def\gZ{{\mathcal{Z}}}

\newcommand{\R}{\mathbb{R}}

\newcommand{\Var}{\mathrm{Var}}

\DeclareMathOperator{\supp}{supp}
\DeclareMathOperator{\dist}{dist}

\DeclareMathOperator{\diag}{diag}

\begin{document}

\title[Geometric Ergodicity of Affine Invariant Ensemble Langevin]{Geometric Ergodicity of Affine Invariant Ensemble Langevin and its Discrete Time Variants}

\author[Hong Ye Tan]{Hong Ye Tan$^\dagger$}
\address{$^\dagger$ Department of Mathematics, UCLA}
\email{\{hyt35,yifanchen\}@math.ucla.edu}

\author[Yifan Chen]{Yifan Chen$^\dagger$}

\begin{abstract}

Affine-invariant ensemble samplers are widely used in Bayesian applications. However, their quantitative convergence theory, in particular geometric ergodicity, remains a basic open question. We study the affine invariant ensemble Langevin dynamics, an interacting particle system that uses the empirical covariance of the whole ensemble as a preconditioner. While effective in practice, theoretical understanding of this method is not available beyond plain qualitative convergence in total variation; a central difficulty is that the empirical covariance can approach singularity. This paper addresses this challenge. For potentials with bounded Hessian that are strongly convex outside a ball, we prove geometric ergodicity using a novel Lyapunov function that combines an inverse-covariance barrier with a coercive exponential energy. We then show that directly applying the Euler--Maruyama scheme can diverge with positive probability, even for a one-dimensional Gaussian target. This motivates a covariance-trace time regularization. We prove geometric ergodicity of the regularized diffusion and, for sufficiently small step size, of its unadjusted Euler--Maruyama discretization. We also show that the invariant distributions of the discretization converge weakly to the product target distribution as the step size tends to zero.
\end{abstract}

\subjclass{65C05, 60H10, 65C35}
\keywords{Affine invariance, sampling, ensemble method, geometric ergodicity, Foster-Lyapunov}

\maketitle
\section{Introduction}
Let $\pi$ be a target distribution on $\R^d$ with density 
\begin{align}
    \pi(x) \propto \exp(-f(x))
\end{align}
for a sufficiently regular potential function $f:\R^d \rightarrow \R$. A standard method to sample from $\pi$ is to simulate the overdamped Langevin diffusion
\begin{equation}\label{eq:LangevinDiffusion}
    \dd{\rvx_t} = -\nabla f(\rvx_t) \dd{t} + \sqrt{2} \dd{W_t}
\end{equation}
where $W_t$ is a standard $d$-dimensional Wiener process. Under mild conditions, this diffusion has invariant distribution $\pi$. Langevin dynamics has deep roots in statistical physics \cite{rossky1978brownian}; we refer to \cite{pavliotis2014stochastic} for a detailed treatment of the underlying diffusion theory.

This work studies an ensemble variant of the diffusion \labelcref{eq:LangevinDiffusion}. For $N$ particles $\rvx_1,...,\rvx_N \in \R^d$, let $\rmX \in \R^{N \times d}$ denote the row-stacked collection of all particles. Recall that the empirical mean and covariance corresponding to $\rmX$ are
\begin{equation*}
    \bar \rvx \coloneqq \frac{1}{N}  \sum_{i=1}^N \rvx_i,\qquad C(\rmX) \coloneqq \frac{1}{N} \sum_{i=1}^N (\rvx_i - \bar \rvx)(\rvx_i - \bar \rvx)^\top.
\end{equation*}
The \emph{affine invariant ensemble Langevin dynamics} are given by the following SDEs
\begin{equation} \label{eq:aldiDynamics}
    \dd{\rvx_i} = \left[-C(\rmX) \nabla f(\rvx_i) + \frac{d+1}{N} (\rvx_i  - \bar \rvx)\right] \dd{t} + \sqrt{2 C(\rmX)} \dd{W_i},\quad i=1,...,N,
\end{equation}
where $W_i$ are independent $d$-dimensional Wiener processes. The empirical covariance acts as a preconditioner adapted to the ensemble, and the correction term ensures that the product target distribution $\Pi_* \coloneqq \pi^{\otimes N}$ is invariant for a finite ensemble. Covariance preconditioning of this type was developed in \cite{greengard2015ensemblized,leimkuhler2018ensemble}; the finite-ensemble correction appears explicitly in \cite{garbuno2020affine}, where the dynamics are termed ALDI.

\textit{Affine invariance} means that transforming the target and initial ensemble by an invertible affine map produces the correspondingly transformed sampling law along the dynamics. Thus, the algorithm's performance is invariant under invertible affine changes of coordinates, making it robust to anisotropy and linear rescaling. The affine-invariant viewpoint was introduced in ensemble MCMC by Goodman and Weare, where the derivative-free stretch and walk moves use ensemble interactions to adapt proposals to the geometry of the target \cite{goodman2010ensemble}. This method has proved to be successful for example in astrophysics, with the open-source Monte Carlo package \texttt{emcee} helping to popularize this approach \cite{foreman2013emcee}. The dynamics in \labelcref{eq:aldiDynamics} incorporate affine invariance into the first-order gradient-based Langevin dynamics \labelcref{eq:LangevinDiffusion}, and they have rich mathematical connections to gradient flows and ensemble Kalman filters via
derivative-free approximations \cite{garbuno2020interacting, garbuno2020affine, chen2026sampling}.

For the ensemble Langevin dynamics, covariance degeneracy presents a new analytical difficulty with no counterpart in the standard overdamped Langevin dynamics. When $\rmX$ approaches the boundary of the admissible state space $\mathrm{M}\coloneqq\{\rmX\in\R^{N\times d}:\det C(\rmX)>0\}$, the empirical covariance $C(\rmX)$ becomes singular and the noise in \labelcref{eq:aldiDynamics} becomes arbitrarily weak in some directions; controlling this collapse is the central obstacle in the analysis. Under suitable assumptions, Garbuno-I\~nigo, N\"usken, and Reich proved that the continuous-time ensemble dynamics preserve nondegeneracy and converge in total variation to $\Pi_*$, but obtained no quantitative rate \cite{garbuno2020affine}. To the best of our knowledge, geometric ergodicity has remained a basic open problem for affine-invariant ensemble samplers in general, including both the finite-particle dynamics \labelcref{eq:aldiDynamics} and the classical Goodman--Weare stretch and walk moves.

The goal of this paper is to take a first step toward such a theory by studying the affine invariant ensemble Langevin dynamics. We begin by establishing geometric ergodicity of the continuous-time dynamics for a broad class of potentials. We then turn to the discretizations used in practice and show that the direct Euler--Maruyama scheme can diverge even for the one-dimensional standard Gaussian target. We also show that using a leave-one-out empirical covariance for each particle experiences a similar divergence. This failure motivates a covariance-dependent time regularization, and we prove that both the regularized diffusion and its explicit Euler--Maruyama discretization are geometrically ergodic.

\subsection{Main results}
We first establish geometric ergodicity for the affine invariant ensemble dynamics \labelcref{eq:aldiDynamics}.

\begin{itheorem}
 Assume that $f$ has Lipschitz gradient and is strongly convex outside a compact ball, and the number of particles satisfies $N \ge d+3$. Then the ensemble dynamics \labelcref{eq:aldiDynamics} are geometrically ergodic with stationary distribution $\pi^{\otimes N}$. 
\end{itheorem}
Details are given in \Cref{ssec:ctsLyapunov}, and the proof in
\Cref{proof:ctsGeoErgo}. Inspired by \cite{garbuno2020affine}, we employ a Foster--Lyapunov approach. This is done by using an improved Lyapunov function involving the inverse covariance defined by 
\begin{equation*}
    W_a = \exp(\frac{a}{N} \sum_{i=1}^N f(\rvx_i)) \Tr(C(\rmX)^{-1}),
\end{equation*}
for a sufficiently small $a>0$ depending on $f$. The exponential term ensures decay as the ensemble goes to infinity, while the inverse covariance produces a negative drift near degenerate covariance. Compared with the (non-geometric) ergodicity result of \cite{garbuno2020affine}, the condition on the number of particles is mildly strengthened from $N \ge d+2$ to $N \ge d+3$, while the assumptions on $f$ are unchanged.

In discrete time, we show that the direct Euler–Maruyama scheme can diverge with positive probability, highlighting the need to control large empirical covariances.

\begin{itheorem}
    Let $d=1$, $f(x)=x^2/2$, and $N\ge2$. For any positive step size $\eta>0$, if the initial empirical variance $C_0$ is sufficiently large, the forward Euler--Maruyama discretization satisfies $C_k\to\infty$ with positive probability. 
\end{itheorem}
Details are given in \Cref{ssec:transient}. The key is that the empirical covariance makes the drift cubic in the ensemble; with positive probability, the noise remains small enough for this drift to drive
the covariance to infinity.

To control this instability, we introduce a time scaled diffusion based on the trace of the covariance, motivated by the tamed unadjusted Langevin algorithm \cite{brosse2019tamed}. For a fixed $\theta>0$, the proposed dynamics are
\begin{subequations} \label{eqs:scalingDiffusionDefinitionIntro}
\begin{gather}
    \dd{\rvx_i} = \left[-\gamma C \nabla f(\rvx_i) + B(C) (\rvx_i - \bar \rvx)\right] \dd{t} + \sqrt{2 \gamma C} \dd{W_i},\quad i=1,...,N,\\
    \gamma \coloneqq\frac{1}{1+\theta  \Tr(C)},\quad B(C) \coloneqq \frac{1}{N} \left[(d+1)\gamma I - 2 \frac{\theta}{(1+\theta \Tr(C))^2} C\right].
\end{gather}
\end{subequations}

The factor $\gamma=\gamma(\Tr C)$ regularizes the covariance-dependent drift, with a modified correction term $B(C)$ so that $\pi^{\otimes N}$ is an invariant distribution of \labelcref{eqs:scalingDiffusionDefinitionIntro}. For small covariance, the dynamics remain close to \labelcref{eq:aldiDynamics}; for large covariance, $\gamma C$ is uniformly bounded. Near covariance collapse, however, the same degeneracy remains, so the boundary still requires separate control. The regularized dynamics are invariant under translations and orthogonal transformations, but not under general affine maps.

Our final result shows that these controls suffice to restore geometric ergodicity in both continuous and discrete time.
\begin{itheorem}
    Assume that $f$ has Lipschitz gradient and is strongly convex outside a compact ball, that the number of particles satisfies $N \ge d+3$, and fix $\theta >0$. Then the regularized diffusion \labelcref{eqs:scalingDiffusionDefinitionIntro} is geometrically ergodic with invariant distribution $\Pi_* = \pi^{\otimes N}$.

    Moreover, for all sufficiently small step sizes $\eta$, the Euler--Maruyama discretizations are also geometrically ergodic, with stationary distributions $\Pi_{*, \eta}$ satisfying
    \[\Pi_{*, \eta} \rightharpoonup \pi^{\otimes N} \quad\text{weakly  as }\eta \rightarrow 0.\]
\end{itheorem}
These results are stated in \Cref{ssec:adapTime,ssec:discAdapGeo}, and the corresponding proofs are given in \Cref{ssec:CtsTimeScaled,ssec:discreteTimeErgo}.

\subsection{Related work}
Affine-invariant ensemble sampling was introduced by Goodman and Weare through the derivative-free stretch and walk moves \cite{goodman2010ensemble}. Later gradient-based developments include first and second-order ensemble Langevin methods \cite{greengard2015ensemblized,leimkuhler2018ensemble,garbuno2020affine,liu2025second} and ensemble Hamiltonian Monte Carlo \cite{chen2025new}. Affine-invariant sampling has also been formulated at the level of gradient flows \cite{chen2026sampling}.

In the mean field limit, the dynamics \labelcref{eq:aldiDynamics} formally become the covariance preconditioned Langevin equation \cite{garbuno2020affine}
\begin{equation}\label{eq:covLangevin}
    \dd{\rvx_t} = - C(\rho_t) \nabla f(\rvx_t) \dd{t} + \sqrt{2 C(\rho_t)} \dd{W_t},\quad \rho_t = \mathrm{Law}(\rvx_t),
\end{equation}
where $C(\rho_t)$ is the covariance of $\rho_t$. A closely related method for Bayesian inverse problems sharing the same mean field limit (in the Gaussian setting) is the ensemble Kalman sampler \cite{garbuno2020interacting}, for which quantitative mean field convergence is available for linear inverse problems \cite{ding2021ensemble}. In this case, the mean-field equation admits explicit solutions and convergence estimates in $L^1$ and Wasserstein-2 distance \cite{garbuno2020interacting,carrillo2021wasserstein,burger2025covariance}. Second-order ensemble Langevin dynamics have likewise been studied at the mean-field level for linear inverse problems, with a characterization of stationary distributions and local convergence \cite{liu2025second}. We refer to \cite{calvello2025ensemble} for a broader overview of mean-field theory for ensemble Kalman methods.

Geometric ergodicity of Langevin dynamics and of their discretizations is also well studied, under a variety of assumptions on the potential $f$ and by a variety of techniques. Coupling, functional inequalities, and estimates of the error between the diffusion and its discretization give nonasymptotic bounds in total variation, Wasserstein distance, and relative entropy \cite{dalalyan2017theoretical,cheng2018convergence, eberle2016reflection, vempala2019rapid,chewi2025analysis}. Foster--Lyapunov theory combines a drift condition in the tails with local mixing on petite sets, though its rate constants are often less explicit \cite{meyn1993stability,durmus2022geometric,pereyra2016proximal}; see \cite{mattingly2002ergodicity} for locally Lipschitz drifts and hypoellipticity arguments. Our proofs follow the Foster--Lyapunov route, but must in addition control the boundary where the empirical covariance becomes singular.

\subsection{Notation}
Throughout, $\rmX \in \R^{N \times d}$ denotes a collection of particles $\rvx_1,...,\rvx_N \in \R^d$, and without loss of generality $f:\R^d \rightarrow \R_{\ge 0}$ is non-negative. For a square matrix $A$, $\|A\|$ denotes the operator norm, while $\|A\|_F$ denotes the Frobenius norm. For a test function $\varphi(\rmX)$, we let $\nabla_i \varphi$ denote the gradient with respect to $\rvx_i$, and similarly $\nabla_i^2 \varphi$ the Hessian with respect to $\rvx_i$. For two matrices $A,B$ of the same shape, we let the Frobenius inner product be denoted
\begin{equation*}
    A\!:\!B = \Tr(A^\top B) = \sum_{i,j}A_{ij}B_{ij}.
\end{equation*}
We let the open admissible set be denoted $\mathrm{M} = \{\rmX \in \R^{N \times d} \mid \det C(\rmX) >0\}$, with null complement $\mathrm{M}^c$ with respect to the Lebesgue measure for $N \ge d+1$. We further let $\Pi_* \propto \exp(-\sum_i f(\rvx_i))$ denote the target product density, using the same notation between the full Lebesgue density on $\R^{N \times d}$ and restricted to $\mathrm{M}$. We denote by $\gL$ the generator of the continuous-time ensemble dynamics \labelcref{eq:aldiDynamics}.

The rest of the paper is organized as follows. \Cref{sec:ctsGeoErgodic} develops the continuous-time Lyapunov argument. \Cref{sec:discrete} proves divergence of the direct discretization and introduces the regularized diffusion. The proofs of the main results are collected in \Cref{sec:proofs}, with supporting definitions and auxiliary calculations in the appendix.

\section{Continuous time: geometric ergodicity} \label{sec:ctsGeoErgodic}
We first briefly detail the proof strategy for ergodicity in \cite{garbuno2020affine}. A Foster--Lyapunov condition shows non-explosiveness assuming the initial condition lies within the admissible set $\mathrm{M}$ \cite[Sec. 3]{meyn1993stability}. An ellipticity plus irreducibility argument within $\mathrm{M}$ then shows positive (Harris) recurrence \cite[Thm. 4.1]{kliemann1987recurrence} and therefore ergodicity.  Supporting definitions are recalled in \Cref{appsec:defs}.

For the ensemble dynamics, the state space is $\mathrm{M} \subset \R^{N \times d}$. A nonnegative real-valued function $V:\mathrm{M} \rightarrow \R_{\ge0}$ is \emph{norm-like} if $V \rightarrow \infty$ as either $\rmX \rightarrow \infty$ or $\rmX \rightarrow \partial \mathrm{M} = \{\det C = 0\}$. We now recall the existing ergodicity result.

\begin{proposition}[{\cite[Prop. 4.4]{garbuno2020affine}}] \label{prop:ALDIRegErgo}
    Assume that $f \in \gC^2 \cap L^1(\pi)$, and further that there exists a compact set $\mathrm{K} \subset \R^d$ and constants $0 < c_1 < c_2$ such that for all $x \in \R^d \setminus \mathrm{K}$,
    \begin{subequations}
        \begin{gather}
            c_1 \|x\|^2 \le f(x) \le c_2 \|x\|^2,\label{eq:ALDIAsmp1}\\
            c_1 \|x\| \le \|\nabla f(x)\| \le c_2 \|x\|,\label{eq:ALDIAsmp2}\\
            c_1 I_d \preceq \nabla^2 f(x) \preceq c_2 I_d.\label{eq:HessBddAsmp}
        \end{gather}
    \end{subequations}
    Then for $N \ge d+2$, the ensemble dynamics \labelcref{eq:aldiDynamics} are ergodic, converging in total variation to the stationary distribution $\Pi_*$.
\end{proposition}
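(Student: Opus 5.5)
The plan follows the route already outlined at the start of this section. Step one is invariance of $\Pi_*$. Step two is a Foster--Lyapunov condition on $\mathrm{M}$ that gives non-explosion and shows the boundary $\partial\mathrm{M}$ is never reached. Step three is an ellipticity and irreducibility argument that upgrades non-explosion to positive Harris recurrence via \cite[Thm. 4.1]{kliemann1987recurrence}. Convergence in total variation to the unique invariant law $\Pi_*$ then follows.

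For invariance, write the generator as $\gL\varphi=\sum_i[-C\nabla f(\rvx_i)+\frac{d+1}{N}(\rvx_i-\bar\rvx)]\cdot\nabla_i\varphi+\sum_i C:\nabla_i^2\varphi$. Then check $\gL^*\Pi_*=0$. The key identity is the divergence computation $\sum_j\partial_{x_{i,j}}C(\rmX)_{jk}=\frac{d+1}{N}(\rvx_i-\bar\rvx)_k$, which uses $\partial_{\rvx_i}C=\frac1N[(\rvx_i-\bar\rvx)\otimes\cdot+\cdot\otimes(\rvx_i-\bar\rvx)]$ (the mean-derivative terms cancel because $\sum_i(\rvx_i-\bar\rvx)=0$). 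With this identity the drift equals $\nabla_i\cdot(C\,\cdot)$ applied to $\log\Pi_*$. Hence the dynamics is in divergence form, $\gL\varphi=\Pi_*^{-1}\sum_i\nabla_i\cdot(\Pi_* C\nabla_i\varphi)$, which is symmetric in $L^2(\Pi_*)$. The hypothesis $f\in L^1(\pi)$, read together with \eqref{eq:ALDIAsmp1}, makes $\Pi_*$ normalizable.

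For the Lyapunov function I would take $V(\rmX)=\sum_i f(\rvx_i)+\kappa\Tr(C^{-1})$, or possibly $-\log\det C$, and show $\gL V\le c-\lambda V$, or at least $\gL V\le c$ outside a compact subset of $\mathrm{M}$.
\begin{itemize}
\item \emph{Energy part.} We get $\gL\sum f=\sum_i[-\nabla f^\top C\nabla f+\frac{d+1}{N}(\rvx_i-\bar\rvx)\cdot\nabla f+C:\nabla^2 f]$. The Itô term is bounded by $c_2\Tr C$ via \eqref{eq:HessBddAsmp}. The negative term $\sum_i\nabla f(\rvx_i)^\top C\nabla f(\rvx_i)$ dominates when the ensemble is large and spread, using \eqref{eq:ALDIAsmp2} and the fact that $C$ is built from the same particles.
\item \emph{Barrier part.} For $\Tr(C^{-1})$, the dominant terms come from Itô's formula applied to the matrix-valued process $C_t$. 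Its quadratic variation has the Wishart-like form $dC=\dots+\frac{2}{N}\text{(noise)}$ with diffusion coefficients proportional to $C^{3/2}$. One obtains a drift of $\Tr(C^{-1})$ of the form $-\frac{N-d-1}{N}\cdot c\,\Tr(C^{-1})+(\text{terms from }\nabla f)$. This is where $N\ge d+2$ enters: the inverse-Wishart-type Itô correction must not overwhelm the correction drift $\frac{d+1}{N}(\rvx_i-\bar\rvx)$. The $\nabla f$ contribution is $\frac{2}{N}\sum_i\Tr(C^{-1}(\rvx_i-\bar\rvx)\nabla f(\rvx_i)^\top)$, and near the boundary it is controlled by Lipschitz bounds on $\nabla f$ (available from the Hessian bounds globally on compacts), giving $O(\Tr C^{-1}\cdot\text{local constant})$ only at lower order.
\end{itemize}

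Recurrence then needs irreducibility on $\mathrm{M}$. On $\mathrm{M}$ the diffusion matrix $\mathrm{blockdiag}(C,\dots,C)$ is uniformly elliptic on compacts, so the transition kernels have positive continuous densities and every compact set is petite. One also needs $\mathrm{M}$ to be connected, which holds since $N\ge d+2$ leaves room to continuously move configurations through affinely spanning ones. Feeding the drift condition into \cite[Sec. 3]{meyn1993stability} and \cite[Thm. 4.1]{kliemann1987recurrence} yields positive Harris recurrence and TV convergence.

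The main obstacle is the barrier computation. The Itô expansion of $\Tr(C^{-1})$ along \eqref{eq:aldiDynamics} must produce a strictly negative leading coefficient near $\partial\mathrm{M}$, and this negative term has to be shown to dominate the gradient cross terms uniformly as $C$ degenerates while particles stay bounded. I would first carry out the Gaussian case $f=\|x\|^2/2$ exactly, then treat the general case as a perturbation using \eqref{eq:HessBddAsmp}.
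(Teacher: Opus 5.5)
Your overall route (invariance of $\Pi_*$ in divergence form, a norm-like Lyapunov function, then ellipticity, connectedness of $\mathrm{M}$ and Kliemann's theorem) matches the paper's sketch, and your invariance computation is fine. The gap is in the step you single out as the main obstacle: the drift of $\Tr(C^{-1})$ does not have a strictly negative leading coefficient for all $N\ge d+2$. The exact formula is (\Cref{lem:GeneratorJ})
\[
\gL\Tr(C^{-1}) = 2\Tr(KC^{-1}) - \frac{2(N-d-2)}{N}\Tr(C^{-1}),
\]
so the coefficient vanishes at $N=d+2$, not at $N=d+1$ as your factor $N-d-1$ suggests.

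Already in the Gaussian case $f=\|x\|^2/2$, which you planned to do first, you have $K=C$. Hence at $N=d+2$ one gets $\gL\Tr(C^{-1})=2d>0$. Meanwhile $\gL\sum_i f(\rvx_i)=-N\Tr(C^2)-N\bar\rvx^\top C\bar\rvx+(N+d+1)\Tr(C)\to 0$ for bounded ensembles with $C\to 0$. So $\gL\bigl(\sum_i f+\kappa\Tr(C^{-1})\bigr)\to 2d\kappa>0$ along a sequence leaving every compact subset of $\mathrm{M}$. In the borderline case the statement covers, you therefore get neither $\gL V\le c-\lambda V$ nor a negative drift off a compact set. For non-quadratic $f$ it is worse: the cross term is only bounded by $2L\sqrt{\Tr(C)\Tr(C^{-1})}$ and need not stay bounded near $\partial\mathrm{M}$.

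A negative boundary drift requires $N\ge d+3$, which is exactly why \Cref{thm:ctsGeoErgo} has that threshold. Even then, the additive $\sum_i f+\kappa\Tr(C^{-1})$ cannot satisfy $\gL V\le c-\lambda V$: a tight cluster drifting to infinity gives $\gL V/V\to 0$. This is why the paper uses the product $e^{aE}\Tr(C^{-1})$. Your account of where $N\ge d+2$ enters is also wrong. It enters only through path-connectedness of $\mathrm{M}$, which you do invoke separately; for $N=d+1$, $\mathrm{M}$ splits into two orientation classes.

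The fix is to ask less of the Lyapunov function, as the paper's sketch does. Non-explosion only needs a norm-like $V$ with $\gL V\le c(1+V)$ \cite[Thm.~2.1]{meyn1993stability}. The paper's $\gV=\sum_i f(\rvx_i)-\frac{d+1}{2}\log\det C$ satisfies
\[
\gL\gV=-ND+2(d+1)\Tr K+\sum_i C\!:\!\nabla^2 f(\rvx_i)-d(d+1)\tfrac{N-1}{N},
\]
and the right-hand side is $\lesssim 1+\Tr(C)\lesssim 1+\gV$. Your additive $V$ also works for this weaker purpose. For $N\ge d+2$ the $\Tr(C^{-1})$ coefficient is nonpositive, and $|\Tr(KC^{-1})|\le\frac{L}{2}\bigl(\Tr(C)+\Tr(C^{-1})\bigr)$.

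Positive recurrence should then come not from a drift inequality but from \cite[Thm.~4.1]{kliemann1987recurrence}. That theorem uses that $\mathrm{M}$ is an invariant control set and that an invariant probability measure exists, namely $\Pi_*$ from your first step. Harris recurrence follows because the process hits a compact, hence petite, set almost surely, and this gives convergence in total variation.
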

The assumptions in \Cref{prop:ALDIRegErgo} are rather weak, and can in fact be reduced to \labelcref{eq:HessBddAsmp} alone, since it implies the other two. For a test function $V(\rmX)$, the generator $\gL$ of \labelcref{eq:aldiDynamics} is given by:
\begin{align}
    \gL V(\rmX) = \sum_i \left(-C \nabla f(\rvx_i) + \frac{d+1}{N} (\rvx_i - \bar \rvx)\right) \cdot \nabla_i V +  \sum_i C \!:\! \nabla_i^2 V.\label{eq:ALDIGeneratorDef}
\end{align}
To show \Cref{prop:ALDIRegErgo}, the Lyapunov function identified in \cite{garbuno2020affine} is
\begin{equation}\label{eq:ALDILogdet}
    \gV(\rmX) = \sum_i f(\rvx_i) - \frac{d+1}{2} \log\det C(\rmX).
\end{equation}
Under some additional conditions satisfied by the diffusion, the norm-like property of $\gV$ and the Foster--Lyapunov condition \cite[Thm. 2.1]{meyn1993stability}
\begin{equation}
    \gL \gV \lesssim 1 + \gV
\end{equation}
give non-explosiveness of the process. The argument then concludes as follows. Since the covariance is full rank in $M$, the diffusion is elliptic in the state space. Since $\Pi_*$ is invariant and has positive Lebesgue density on $\mathrm{M}$, an invariant probability measure exists. Using that the state space $\mathrm{M}$ is an \emph{invariant control set}, \cite[Thm. 4.1]{kliemann1987recurrence} gives that the diffusion is positive recurrent. This plus Harris recurrence, shown in \Cref{appsec:HarrisDiffusion}, gives ergodicity in the sense of Meyn--Tweedie \cite[Thm. 6.1]{meyn1993stability2}.

\subsection{Log determinant is insufficient}\label{ssec:logdet}
To upgrade from ergodicity to geometric ergodicity, a typical route is the following stronger Foster--Lyapunov condition \cite[Thm. 6.1]{meyn1993stability}: for some $c>0$, $b \in \R$, and a compact set $\mathrm{K} \subset \mathrm{M}$,
\begin{equation}\label{eq:geoFosterLyapunov}
    \gL \gV \le -c \gV + b \mathbf{1}_\mathrm{K}.
\end{equation}

However, this condition does not hold for the Lyapunov function \labelcref{eq:ALDILogdet}. We will show this by exhibiting a sequence $\rmX \rightarrow \partial \mathrm{M}$ where $\gL \gV$ does not tend to $-\infty$.

Consider the Gaussian target $f(x) = \|x\|^2/2$ in any dimension $d\ge 1$. Applying $\gL$ to $\log \det C$ yields
\begin{align*}
    \gL \log \det C &= 2d\left(1 - \frac{1}{N}\right) - \frac{2}{N} \sum_i (\rvx_i - \bar \rvx)\cdot \nabla f(\rvx_i)\\
    &=2d\left(1-\frac{1}{N}\right) - 2 \Tr(C)
\end{align*}
The other term gives
\begin{align*}
    \gL \sum_i f(\rvx_i) &= \sum_i \left(-C\rvx_i + \frac{d+1}{N}(\rvx_i - \bar \rvx)\right)\cdot \rvx_i + N \Tr(C)\\
    &= -\sum_i \rvx_i^\top C \rvx_i + (N+d+1) \Tr(C)
\end{align*}
Combining the two identities, the Lyapunov function \labelcref{eq:ALDILogdet} satisfies
\begin{align*}
     &\quad \gL \left(\sum_i f(\rvx_i) - \frac{d+1}{2}\log\det C \right) \\
     &= -\sum_i \rvx_i^\top C \rvx_i- d(d+1)\left(1-\frac{1}{N}\right)+ (N+2d+2) \Tr(C)\\
     &= -N \Tr(C^2) - N \bar \rvx^\top C \bar \rvx - d(d+1)\left(1-\frac{1}{N}\right) + (N+2d+2) \Tr(C).
\end{align*}
Take any sequence of zero mean ensembles with eigenvalues of $C$ all being $\lambda$ for $\lambda \rightarrow 0$. Then, $\Tr(C)$ and $\Tr(C^2)$ tend to 0, hence $\gL \gV$ is bounded below. However, the ensembles converge to $\partial \mathrm{M}$, so $\gV \rightarrow \infty$. Therefore, no $c>0$ exists such that the geometric Foster--Lyapunov criterion \labelcref{eq:geoFosterLyapunov} is satisfied.

The crux is that a Lyapunov function whose drift $\gL \gV$ involves only positive powers of $C$ cannot satisfy \labelcref{eq:geoFosterLyapunov}: such terms vanish as $\rmX \rightarrow \partial \mathrm{M}$, and so do not supply the negative drift needed near the boundary.

\subsection{New Lyapunov function}\label{ssec:ctsLyapunov}
While the log determinant Lyapunov function is insufficient to show geometric ergodicity, it is still possible to find a Lyapunov function that satisfies the Foster--Lyapunov condition \labelcref{eq:geoFosterLyapunov}. This is based on the inverse covariance, with suitable modifications to ensure that the Lyapunov function works in high dimensions.

We use the same assumption as in \cite{garbuno2020affine}, namely bounded Hessian and strong convexity outside a compact ball. Variants of this are standard in the convergence literature for Lyapunov-like analyses of Langevin algorithms \cite{durmus2017nonasymptotic,durmus2022geometric,pereyra2016proximal,eberle2016reflection,gorham2019measuring}; such conditions avoid assuming global strong convexity.
\begin{assumption}[Distant convexity]\label{assmp:distantConvexityND}
    The potential $f: \R^d \rightarrow \R_{\ge 0}$ is $\gC^2$, and there exists a constant $L>0$ such that $\sup_x\|\nabla^2 f(x)\| \le L$. Furthermore, there exists a $\mu>0$ such that outside a compact set, the Hessian is uniformly positive $\nabla^2 f \succeq \mu I_d$.
\end{assumption}
Motivated by the insufficiency of positive powers of covariance in the drift, we can use $\Tr(C^{-1})$ within a Lyapunov function, which will result in inverse covariance terms after applying the generator. Further modifying the coercivity term to deal with covariance collapse, this allows us to show a Foster--Lyapunov condition, i.e. decay of a Lyapunov function outside some compact set.
\begin{theorem}\label{thm:ctsGeoErgo}
    Suppose that $f$ satisfies \Cref{assmp:distantConvexityND} and consider the affine invariant Langevin dynamics \labelcref{eq:aldiDynamics}. If $N \ge d+3$, then for sufficiently small $a>0$, 
    \begin{equation}\label{eq:LyapunovWa}
        W_a = e^{\frac{a}{N} \sum f(\rvx_i)} \Tr(C^{-1})
    \end{equation}
    satisfies the Foster--Lyapunov condition
    \begin{equation}
        \gL W_a \le -c W_a + b \mathbf{1}_\mathrm{K}
    \end{equation}
    for some constants $c>0,\, b \in \R$ and compact $\mathrm{K} \subset \mathrm{M}$ depending on $a$. 
\end{theorem}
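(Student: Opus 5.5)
We write $W_a = E\,T$, where $E = e^{\frac{a}{N}\sum_i f(\rvx_i)}$ and $T = \Tr(C^{-1})$. We apply the generator \labelcref{eq:ALDIGeneratorDef} using the product rule:
\[
\gL W_a = T\,\gL E + E\,\gL T + 2\sum_i C\!:\!(\nabla_i E \otimes \nabla_i T)_{\mathrm{sym}} .
\]
The plan is to show that, after dividing by $W_a$, the right-hand side is bounded above by $-c$ outside a compact subset of $\mathrm{M}$. There are two regimes where this must hold: near $\partial\mathrm{M}$, where $T$ blows up, and at infinity, where $\sum_i f$ grows.

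\textbf{Step 1: the covariance term $\gL T$.} We use $\nabla_i C = $ (terms linear in $\rvx_i-\bar\rvx$) and $\partial C^{-1} = -C^{-1}(\partial C)C^{-1}$, and compute $\gL T$ exactly.
\begin{itemize}
\item The correction drift $\frac{d+1}{N}(\rvx_i-\bar\rvx)$ scales $C$. It contributes $-\frac{2(d+1)}{N}T$.
\item The diffusion term $\sum_i C\!:\!\nabla_i^2 T$ involves $\nabla_i^2 C^{-1}$. By the computation analogous to the one for $\log\det C$, it gives a contribution of the form $\frac{2}{N}\big(1-\tfrac1N\big)$ times $(d+1)$ times $T$ plus lower-order terms. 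The net coefficient of $T$ should be $-\frac{2}{N}(N-d-2-\ldots)$, which is negative precisely when $N\ge d+3$. I expect this to be where the strengthened particle count enters.
\item The gradient drift $-C\nabla f(\rvx_i)$ contributes
\[
\frac{2}{N}\sum_i \Tr\!\big(C^{-1}(\rvx_i-\bar\rvx)\nabla f(\rvx_i)^\top\big).
\]
Writing $\nabla f(\rvx_i)=\nabla f(\bar\rvx)+H_i(\rvx_i-\bar\rvx)$, with $\|H_i\|\le L$ by the mean value theorem, the mean part cancels because $\sum_i(\rvx_i-\bar\rvx)=0$. The remainder is bounded by $2L\,\Tr(C^{-1}C)=2Ld$, a constant.
\end{itemize}
Hence $\gL T \le -\kappa T + 2Ld$ for some $\kappa>0$. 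This supplies the negative drift near $\partial\mathrm{M}$ that was missing in \Cref{ssec:logdet}.

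\textbf{Step 2: the energy term $\gL E / E$.} We have
\[
\frac{\gL E}{E} = \frac{a}{N}\sum_i\Big(-\nabla f_i^\top C\nabla f_i + \tfrac{d+1}{N}(\rvx_i-\bar\rvx)\cdot\nabla f_i + \Tr(C\nabla^2 f_i)\Big) + \frac{a^2}{N^2}\sum_i \nabla f_i^\top C\nabla f_i .
\]
For $a<N$, the last term is absorbed, leaving $-\frac{a}{N}(1-\frac aN)\sum_i\nabla f_i^\top C\nabla f_i$. The positive terms are bounded by $\frac{a}{N}\big(NL\Tr C + (d+1)\cdot(\text{linear})\big)$.

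\textbf{Step 3: the cross term.} The cross term is
\[
\frac{2a}{N}\sum_i \nabla f_i^\top C\,\nabla_i T
= -\frac{4a}{N^2}\sum_i \nabla f_i^\top C\, C^{-2}(\rvx_i-\bar\rvx)\cdot(\text{sym.}) .
\]
It is bounded, via Cauchy–Schwarz and $C^{-1}$-weighted norms, by
\[
\varepsilon\sum_i\nabla f_i^\top C\nabla f_i\; T + C_\varepsilon\, a\, T .
\]
Here we use $\sum_i (\rvx_i-\bar\rvx)^\top C^{-1}(\rvx_i-\bar\rvx)=Nd$ together with $\|C^{-1}\|\le T$. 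Choosing $a$ small makes $C_\varepsilon a\,T$ absorbable into $-\kappa T$.

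\textbf{Step 4: combining and the compact set.} Collecting terms,
\[
\frac{\gL W_a}{W_a} \le -\kappa' - c_a\,\frac{\sum_i \nabla f_i^\top C\nabla f_i}{N} + C'\Big(\frac{1}{T}+a\Tr C\Big).
\]
We check both escape regimes.
\begin{itemize}
\item Near $\partial\mathrm{M}$ with bounded ensemble, $T\to\infty$ and $\Tr C$ is bounded. For $a$ small, the negative constant $-\kappa'$ wins.
\item At infinity, the main obstacle is to show that $\sum_i\nabla f_i^\top C\nabla f_i$ dominates $L\,\Tr C\cdot N$ in the sense needed. Distant convexity gives $\|\nabla f(x)\|\gtrsim\mu\|x\|-c$. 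However, when the ensemble is far away but tightly clustered ($C$ small, $\bar\rvx$ large), $\nabla f_i^\top C\nabla f_i$ may be small while $\Tr C$ is also small. In that case the $+L\Tr C$ term is harmless and $-\kappa'$ still dominates.
\end{itemize}
I would therefore split according to whether $\Tr C\le\delta$ or $\Tr C>\delta$, for a small $\delta$.
\begin{itemize}
\item If $\Tr C\le\delta$, we have $aL\Tr C\le aL\delta<\kappa'/2$.
\item If $\Tr C>\delta$, the covariance is spread out. We then show $\frac1N\sum_i\nabla f_i^\top C\nabla f_i\ge \mu^2\Tr(C^2)-c\ge \frac{\mu^2}{d}(\Tr C)^2-c$ by expanding around $\bar\rvx$ with the convexity lower bound. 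This is quadratic and beats the linear $\Tr C$ once $\Tr C$ is large.
\end{itemize}
Handling the residual region where $\Tr C$ is moderate but $\bar\rvx\to\infty$ is the delicate point. There I would use strong convexity, $\nabla f_i\approx\nabla f(\bar\rvx)$ with $\|\nabla f(\bar\rvx)\|\to\infty$ and $\lambda_{\min}(C)\ge 1/T$, to obtain a large negative term. Alternatively, in the bounded-$T$ subcase $\lambda_{\min}(C)$ is bounded below directly. The set where none of these mechanisms produces drift below $-c$ is bounded with $T$ bounded, hence compact in $\mathrm{M}$. This gives the condition with $\mathrm{K}$ equal to that set and $b=\sup_{\mathrm{K}}(\gL W_a+cW_a)$.
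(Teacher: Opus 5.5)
Your overall plan is the paper's. You use the product rule for $W_a$, an exact negative coefficient $-\frac{2}{N}(N-d-2)$ in front of $\Tr(C^{-1})$, the energy bound $-c_aD+aP$, $D\gtrsim(\Tr C)^2$ for large $\Tr C$, and $D\gtrsim E/J_0$ on $\{\Tr(C^{-1})\le J_0\}$ to handle $\bar\rvx\to\infty$. However, Step 1 rests on a false inequality.

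\textbf{The false step.} You bound the gradient-drift contribution $\frac{2}{N}\sum_i\boldsymbol\delta_i^\top H_iC^{-1}\boldsymbol\delta_i$ (with $\boldsymbol\delta_i=\rvx_i-\bar\rvx$) by $2L\Tr(C^{-1}C)=2Ld$. That would require $\boldsymbol\delta^\top HC^{-1}\boldsymbol\delta\le L\,\boldsymbol\delta^\top C^{-1}\boldsymbol\delta$, which fails because $H_i$ and $C^{-1}$ do not commute. For a single common Hessian the sum equals $\Tr H$, but the $H_i$ differ between particles and that cancellation is lost. Take $d=2$, $C=\diag(\epsilon,1)$ and $\boldsymbol\delta_i=(\sqrt{\epsilon}\,a_i,b_i)$. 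The sum then contains $\epsilon^{-1/2}\frac1N\sum_i(H_i)_{12}a_ib_i$. This is zero when $(H_i)_{12}$ is constant, since $\sum_ia_ib_i=0$. It is of order $L\epsilon^{-1/2}$ when $\partial_{12}f$ changes sign across the ensemble, e.g.\ $N=5$, $b\propto(-2,-1,0,1,2)$, $a\propto(1,-1,0,-1,1)$, $(H_i)_{12}\approx L\,\mathrm{sign}(b_i)$.

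So $\gL\Tr(C^{-1})\le-\kappa\Tr(C^{-1})+2Ld$ is false. The correct estimate comes from Cauchy--Schwarz with $\|H_i\boldsymbol\delta_i\|\le L\|\boldsymbol\delta_i\|$ and $\frac1N\sum_i\|C^{-1}\boldsymbol\delta_i\|^2=\Tr(C^{-1})$: it is $|\Tr(KC^{-1})|\le L\sqrt{\Tr(C)\Tr(C^{-1})}$.

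\textbf{Consequence for Step 4.} The term $C'/\Tr(C^{-1})$ in your combined inequality must become $2L\sqrt{\Tr C/\Tr(C^{-1})}$. This still vanishes as $\Tr(C^{-1})\to\infty$ with $\Tr C$ bounded, so the boundary mechanism survives. But it is only bounded by $2L\Tr C/d$, with a coefficient independent of $a$, so shrinking $a$ cannot absorb it. It must be beaten by $-c_aC_T(\Tr C)^2$ on $\{\Tr C\ge T_0\}$, and made small by requiring $\Tr(C^{-1})\ge J_0$ on $\{\Tr C\le T_0\}$. Your case split can accommodate this, so the argument is repairable, but as written it uses a wrong bound.

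\textbf{Two further points to tighten.}

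First, the coefficient that decides $N\ge d+3$ is guessed rather than computed, and your description of the diffusion part is inaccurate. Exactly, $\sum_jC\!:\!\nabla_j^2\Tr(C^{-1})=(-2+\frac{4d+6}{N})\Tr(C^{-1})$. Adding $-\frac{2(d+1)}{N}\Tr(C^{-1})$ from the correction drift gives $-\frac{2}{N}(N-d-2)\Tr(C^{-1})$ with no lower-order remainder.

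Second, $D\ge\mu^2\Tr(C^2)-c$ does not follow by expanding around $\bar\rvx$. The potential $f$ is convex only outside a ball, and $\|C^{1/2}H_i\boldsymbol\delta_i\|\ge\mu\|C^{1/2}\boldsymbol\delta_i\|$ fails for non-commuting $H_i$ and $C$. Instead use $\Tr K=\frac{1}{2N^2}\sum_{i,j}(\rvx_i-\rvx_j)^\top(\rvg_i-\rvg_j)\ge c\Tr C-b$, which is monotonicity of $\nabla f$ up to a constant from distant convexity. Combined with $(\Tr K)^2\le dD$, this gives $D\ge C_T(\Tr C)^2-B_T$.

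Your treatment of the cross term is fine. Bounding $|\Tr(KC^{-1})|/\Tr(C^{-1})\le\sqrt{dD}$ and absorbing it into $-c_aD$ at cost $O(a)$ is a valid alternative to the paper's grouping of it with the drift term.
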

The proof is deferred to \Cref{proof:ctsGeoErgo}. Such a Foster--Lyapunov condition almost immediately implies geometric ergodicity \cite[Thm. 6.1]{meyn1993stability}.

\begin{corollary}\label{cor:geoErgodicWa}
    Under \Cref{assmp:distantConvexityND} and for $N \ge d+3$, the ensemble dynamics \labelcref{eq:aldiDynamics} are geometrically ergodic.
\end{corollary}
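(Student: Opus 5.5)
The corollary follows from \Cref{thm:ctsGeoErgo} and the Meyn--Tweedie criterion \cite[Thm. 6.1]{meyn1993stability}. Most of the work is checking that criterion's structural hypotheses, which we take over from the ergodicity argument behind \Cref{prop:ALDIRegErgo}.

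\textbf{Step 1: drift condition.} Fix $a>0$ small enough that \Cref{thm:ctsGeoErgo} applies. This gives $\gL W_a \le -cW_a + b\mathbf{1}_\mathrm{K}$ with $\mathrm{K}\subset\mathrm{M}$ compact. We next check that $W_a$ is norm-like on $\mathrm{M}$.
\begin{itemize}
\item Near the boundary: $f\ge0$ gives $e^{\frac aN\sum f}\ge 1$. As $\det C\to0$ we have $\lambda_{\min}(C)\to0$, so $\Tr(C^{-1})\ge 1/\lambda_{\min}(C)\to\infty$.
\item At infinity: if $\|\rmX\|\to\infty$ within $\mathrm{M}$, \Cref{assmp:distantConvexityND} gives quadratic growth of $f$, so $\sum f(\rvx_i)\to\infty$. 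Also $\Tr(C^{-1})\ge d^2/\Tr(C)$, and $\Tr(C)\lesssim\|\rmX\|^2$. Hence $W_a\gtrsim e^{c'\|\rmX\|^2}/\|\rmX\|^2\to\infty$.
\end{itemize}
The only delicate part is combining the two regimes, i.e.\ large $\rmX$ with degenerating $C$. Both lower bounds hold simultaneously, so $W_a\to\infty$ there as well.

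\textbf{Step 2: non-explosion and conversion to the continuous-time criterion.} Since $W_a$ is norm-like and $\gL W_a\le b$, \cite[Thm. 2.1]{meyn1993stability} gives that the process started in $\mathrm{M}$ stays in $\mathrm{M}$ and does not explode. The drift condition is then exactly the hypothesis (CD3) of \cite[Thm. 6.1]{meyn1993stability}. That theorem gives $V$-uniform ergodicity with $V=W_a+1$, namely $\|P^t(\rmX,\cdot)-\Pi_*\|_{V}\le B\,V(\rmX)\rho^t$ for some $\rho<1$, provided the process is $\psi$-irreducible, aperiodic, and compact sets are petite.

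\textbf{Step 3: structural conditions (main obstacle).} These come from the argument recalled after \Cref{prop:ALDIRegErgo}.
\begin{itemize}
\item Ellipticity: on $\mathrm{M}$ the diffusion matrix $C\otimes I_N$ is positive definite, and the coefficients are locally Lipschitz.
\item Smooth positive densities: ellipticity makes the transition kernels have smooth positive densities locally, so the process is a $T$-process (strong Feller).
\item Irreducibility: $\mathrm{M}$ is an invariant control set \cite{kliemann1987recurrence}, so the process is Lebesgue-irreducible on $\mathrm{M}$.
\item Petite sets: for a $\psi$-irreducible $T$-process, every compact set is petite. In particular $\mathrm{K}$ is petite.
\item Aperiodicity: positive densities make the skeleton chains aperiodic.
\end{itemize}
The subtle point is that $\mathrm{M}$ is open and non-convex, so irreducibility must be argued by controllability within $\mathrm{M}$ rather than in all of $\R^{N\times d}$. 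Here we rely on the control-set result already used for \Cref{prop:ALDIRegErgo} and the Harris recurrence shown in \Cref{appsec:HarrisDiffusion}.

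\textbf{Conclusion.} The invariant measure is unique and equals $\Pi_*$, since it is invariant and positive on $\mathrm{M}$. This yields geometric convergence in total variation, indeed in $W_a$-weighted norm, from every initial ensemble in $\mathrm{M}$.
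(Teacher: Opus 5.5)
Your proposal is correct and follows essentially the same route as the paper. It takes the drift bound from Theorem~\ref{thm:ctsGeoErgo}, checks that $W_a$ is norm-like using $e^{\frac{a}{N}\sum f}\ge 1$ near $\partial\mathrm{M}$ and $\Tr(C^{-1})\ge d^2/\Tr(C)$ together with quadratic growth of $f$ at infinity, and then applies the Meyn--Tweedie exponential ergodicity criterion, with irreducibility and petiteness of compact sets coming from ellipticity on $\mathrm{M}$ and the Kliemann control-set argument. Your precompactness argument for the level sets, which bounds $\|\rmX\|$ and $\lambda_{\min}(C)$ at the same time, is a slightly more careful version of the paper's two separate limits rather than a different approach.
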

\begin{proof}
    To apply \cite[Thm. 6.1]{meyn1993stability}, it remains to show that $W_a$ is norm-like. It is positive on $\mathrm{M}$. Since the exponential is at least $1$, we have that $W_a \rightarrow \infty$ as $\rmX \rightarrow \partial \mathrm{M}$.

    We now show that $W_a \rightarrow \infty$ as $\rmX \rightarrow \infty$. By Cauchy--Schwarz, $\Tr(C) \Tr(C^{-1}) \ge d^2$, and by \Cref{prop:DistantConvexity}, there exists $b_1>0$ such that the following bounds hold
    \begin{equation*}
        W_a = e^{\frac{a}{N}\sum_i f(\rvx_i)} \Tr(C^{-1}) \ge \frac{d^2 e^{\frac{a}{N}\sum_i f(\rvx_i)}}{\Tr(C)} \ge \frac{d^2 N e^{\frac{a}{N}\sum_i f(\rvx_i)}}{\|\rmX\|_F^2} \gtrsim\frac{e^{\frac{a}{N}\sum_i f(\rvx_i)}}{\sum_i f(\rvx_i) + b_1}.
    \end{equation*}
    Since $f$ is coercive, $\rmX\rightarrow \infty$ implies $\sum f(\rvx_i) \rightarrow \infty$ and therefore $W_a \rightarrow \infty$.
\end{proof}

\begin{remark}
    In one dimension, an alternative Lyapunov function is $(1+\sum f(\rvx_i)) C^{-\kappa}$ for some sufficiently small $\kappa \in (0,1)$, valid for $N \ge 3$. This suggests it may be possible to weaken the particle condition from $N \ge d+3$ to $N\ge d+2$. 
\end{remark}

\begin{remark}\label{rem:polyTailsBound}
    Geometric ergodicity also holds under the following more general tail condition: $f \in \gC^2$ is bounded below by $1$, and there exist constants $\ell \ge 0$ and $c_1, c_2>0$ such that outside a compact set,
    \begin{align*}
        &c_1 \|x\|^{\ell+2} \le f(x) \le c_2 \|x\|^{\ell+2},\\
        &c_1 \|x\|^{\ell+1} \le \|\nabla f(x)\| \le c_2 \|x\|^{\ell+1},\\
        & c_1 \|x\|^\ell I_d \preceq \nabla^2 f(x) \preceq c_2 \|x\|^\ell I_d.
    \end{align*}
    This assumption is used for example in \cite{vaes2024sharp} for ensemble samplers, and can be applied to potentials like $f(x) = \|x\|^4$. The proof of this extension is deferred to \Cref{appsec:polyTailsBound}.
\end{remark}

\section{Discrete time divergence and geometric ergodicity of a regularized scheme}\label{sec:discrete}
The previous section shows that the continuous time flow is geometrically ergodic through a tailored Lyapunov function. To obtain geometric ergodicity for time discretizations, a common technique is to show that the discrete Markov kernel approximates the continuous semigroup, then take a sufficiently small step size such that the resulting Markov chain also satisfies a Foster--Lyapunov condition in the sense of \cite[Sec. 6]{meyn1992stability1}

For the affine invariant ensemble Langevin dynamics, a uniform comparison breaks down because large empirical covariance makes the drift non-globally Lipschitz. In fact, even for a one-dimensional Gaussian target, the direct Euler--Maruyama scheme can diverge for every positive step size.
More precisely, the forward Euler--Maruyama discretization of \labelcref{eq:aldiDynamics} with step size $\eta>0$ is given in \cite{garbuno2020affine} by
\begin{subequations}
    \begin{gather}
        \rvx_i^{(k+1)} = \rvx_i^{(k)} + \eta \left[-C_{k} \nabla f(\rvx_i^{(k)}) + \frac{d+1}{N}(\rvx_i^{(k)} - \bar \rvx^{(k)})\right] + \sqrt{2\eta} S_{k} \xi_i^{(k)} \label{eq:DiscretizedALDI},\\
        S(\rmX) \coloneqq \frac{1}{\sqrt{N}} \begin{bmatrix}
            \rvx_1 - \bar \rvx & ... & \rvx_N - \bar \rvx
        \end{bmatrix} = \frac{1}{\sqrt{N}}(\rmX^\top - \bar \rvx \mathbf{1}_N^\top)
    \end{gather}
\end{subequations}
where $S(\rmX)$ is a rectangular non-symmetric square root of the empirical covariance satisfying $C(\rmX) = SS^\top$, and $\xi_i^{(k)}$ are standard Gaussian vectors of appropriate length, in this case $N$-dimensional.

\subsection{Counterexample: direct Euler can diverge for 1D Gaussian}\label{ssec:transient}
For the one-dimensional Gaussian target $f(x) = x^2/2$, the discretized iterations \labelcref{eq:DiscretizedALDI} simplify to 
\begin{equation}
    x_j^{(k+1)} = x_j^{(k)} -\eta C_k x_j^{(k)} + \frac{2\eta}{N} (x_j^{(k)} - \bar x^{(k)}) + \sqrt{2 \eta C_k } \xi_{j}^{(k)}
\end{equation}
for some i.i.d.\ standard 1D Gaussians $\xi_j^{(k)}$. Suppose $N \ge 2$. Define the zero mean vector $\rmY^{(k)} = P \rmX^{(k)}$, where $P = I_N - \frac{1}{N} \mathbf{1}_N\mathbf{1}_N^\top$, so that $C_k = N^{-1}\|\rmY^{(k)}\|_F^2$. Then, the update for $\rmY^{(k)}$ satisfies 
\begin{equation}\label{eq:DiscreteALDIRecursionGaussian}
    \rmY^{(k+1)} = (1 - \eta C_k + \frac{2\eta }{N}) \rmY^{(k)} + \sqrt{2\eta C_k}P \boldsymbol{\xi}^{(k)},
\end{equation}
where $\boldsymbol{\xi}^{(k)}$ are $N$-dimensional standard Gaussians. Observe that the drift component contains a cubic term, hence is not globally Lipschitz. We will show that, for sufficiently large initial $C_0$, this cubic term can drive the ensemble covariance to infinity with positive probability.

Fix any step size $\eta>0$, and define the function $ \omega(c) = 1 - \eta c + \frac{2\eta}{N}$ for $c>0$, so that the recursion \labelcref{eq:DiscreteALDIRecursionGaussian} is
\begin{equation*}
        \rmY^{(k+1)} = \omega(C_k) \rmY^{(k)} + \sqrt{2\eta C_k} P \boldsymbol{\xi}^{(k)}.
\end{equation*}
Consider the events
\begin{equation*}
    A_k \coloneqq \left\{\|P\boldsymbol{\xi}^{(k)}\|_F \le \frac{|\omega(C_k)|\sqrt{N}}{2\sqrt{2\eta} }\right\}.
\end{equation*}

Along the trajectory we will construct, $C_k$ increases, so $\mathbb{P}(A_k)$ tends to $1$ quickly as $k \rightarrow \infty$. Noting that $\|\rmY^{(k)}\|_F = \sqrt{NC_k}$, and conditioned on previous events $\cap_{j \le k} A_j$, the norm update satisfies
\begin{align*}
    \|\rmY^{(k+1)}\|_F &\ge |\omega(C_k)| \|\rmY^{(k)}\|_F - \sqrt{2\eta C_k} \|P\boldsymbol{\xi}^{(k)}\|_F \ge \frac{1}{2}|\omega(C_k)| \|\rmY^{(k)}\|_F,
\end{align*}
therefore the covariance update satisfies
\begin{equation}\label{eq:CovRecursion}
    C_{k+1} \ge \frac{\omega(C_k)^2}{4} C_k.
\end{equation}

Since $|\omega(c)|/c \rightarrow \eta$ as $c \rightarrow \infty$, choose $C_0$ sufficiently large such that for all $c \ge C_0$, it holds that $\omega(c)^2/4 \ge e^\lambda$ for some constant $\lambda>0$ to be chosen later. The recursion \labelcref{eq:CovRecursion} then gives $C_k \gtrsim e^{\lambda k}$. The probability of the event $A_k$ is then lower bounded by 
\begin{align*}
    \mathbb{P}(A_k \mid \cap_{j<k} A_j) \ge 1 - \exp(-B \lambda^2 k^2)
\end{align*}
for some constant $B>0$ depending only on $N$ and $\eta$. The probability that all events occur conditioned on sufficiently large initial covariance is bounded below by
\begin{align*}
    \mathbb{P}(A_k ,\ \forall k \ge 1) &\ge \prod_{k \ge 1}\left[1 - \exp(-B \lambda^2 k^2)\right] \\
    &\ge 1 - \sum_{k \ge 1} \exp(- B \lambda^2 k^2).
\end{align*}
Choose $\lambda$ sufficiently large that the sum is less than $1$, so that all the events $A_k$ hold simultaneously with positive probability. On this event $C_k \to \infty$, which proves positive-probability divergence.

\subsection{Counterexample:  leave-one-out Euler can also diverge}\label{ssec:leaveoneout}
A similar approach shows that the discretized \textit{leave-one-out} variant can fail in the same way. Here the covariance preconditioning applied to any particle is the empirical covariance of the remaining particles. This variant is proposed for example in \cite{nusken2019note,leimkuhler2018ensemble} to avoid the need for a correction term, while retaining $\Pi_*$ as an invariant distribution. The discretized dynamics are 
\begin{equation}
    \rvx_i^{(k+1)} = \rvx_i^{(k)} - \eta C^{(k)}_{-i} \nabla f(\rvx_i^{(k)}) + \sqrt{2\eta C^{(k)}_{-i}}  \xi_i^{(k)} ,\label{eq:leaveoneout}
\end{equation}
where $C_{-i}$ denotes the empirical covariance of the particles $\{\rvx_j \mid j \ne i\}$. We now show that for $N \ge 3$ and the 1D Gaussian target $f(x) = x^2/2$, the covariance explodes with positive probability because of the cubic interaction in the drift. This is based on a two-step recurrence in order to couple the effect of any given particle back to itself.

First observe the equivalent representation
\begin{equation*}
    C_{-i}(\rmX) = \frac{N}{N-1} C(\rmX) - \frac{N}{(N-1)^2} (x_i - \bar x)^2.
\end{equation*}
Some manipulation yields the following two inequalities, detailed in \Cref{appsec:leaveOneOutIneqs}:
\begin{equation}\label{eq:leaveOneOutIneqs}
    C_{-i}(\rmX) \le \frac{N}{N-1} C(\rmX),\quad C_{-i} + C_{-j} \ge 2 \kappa C(\rmX), \quad i \ne j,\, \kappa \coloneqq \frac{N(N-2)}{2(N-1)^2}.
\end{equation}
In particular, at least $N-1$ of the covariances $C_{-i}$ are at least $\kappa C(\rmX)$. Let us condition on some fixed $\rmX$ and let $r = C^{1/2}(\rmX)$. Let $\rmY, \rmZ$ be the next two updates. We will show that the following two-step event holds with high probability: for some universal constants $K>0$ depending only $N, \eta$ that may change between lines,
\begin{equation}\label{eq:HighProbabilityLOO}
    \mathbb{P}(C^{1/2}(\rmY) \ge \sqrt{r},\, C^{1/2}(\rmZ) \ge 2r \mid \rmX) \ge 1 - K r^{-1/4}.
\end{equation}

Since $N \ge 3$, let $i \ne j$ index two particles satisfying $C_{-i}(\rmX), C_{-j}(\rmX) \ge \kappa r^2$. Then $y_i, y_j$ are two independent Gaussians with variance at least $2 \eta \kappa r^2$. From triangle inequality, we can bound the covariance in terms of the squared difference as $|y_i - y_j|^2 \le 2(y_i - \bar{y})^2 + 2(y_j - \bar y)^2 \le 2NC(\rmY)$. Therefore, using the variance lower bound on $y_i - y_j$,
\begin{align*}
    \mathbb{P}(C^{1/2}(\rmY) \le \sqrt{r}) &\le \mathbb{P}(|y_i - y_j|^2 \le 2Nr)\\
    &\le\frac{2\sqrt{2Nr}}{\sqrt{2\pi} \sqrt{4\eta \kappa r^2}}\le Kr^{-1/2}.
\end{align*}

We now use the following elementary bound: if $U \sim \gN (m, \sigma^2)$ is a one-dimensional Gaussian, then a quadratic form of $U$ is bounded with probability
\begin{equation*}
    \mathbb{P}(|aU^2 + bU + c| \le u) \le \frac{2}{\sqrt{\pi |a| \sigma^2}} \sqrt{u}.
\end{equation*} 
A short proof of the length bound is given in \Cref{appsec:leaveOneOutIneqs}, based on upper bounding the length of the admissible interval and the density of $U$. From the lower bound of the variance of particle $j$, we have
\begin{equation}
    \mathbb{P}_{\rmX}(|y_j|^2 < r \mid \rmX) \le \frac{2}{\sqrt{\pi \kappa r^2}}\sqrt{r} \le K r^{-1/2}.
\end{equation}

Now consider the complement event $|y_j|^2 \ge  r$. The difference in the means of the second update is
\begin{multline*}
    \mathbb{E}[z_i - z_j\mid \rmY] = \left(1 - \frac{\eta}{N-1}\sum_{k \ne i} (y_k - m_{-i}(\rmY))^2\right) y_i \\- \left(1 - \frac{\eta}{N-1}\sum_{k \ne j} (y_k - m_{-j}(\rmY))^2\right) y_j,
\end{multline*}
where $m_{-i}$ is the mean of all particles except the $i$'th. This is a quadratic in $y_i$, and the leading coefficient of $y_i^2$ is 
\begin{equation*}
    \frac{\eta}{N-1} \left[\frac{(N-2)}{(N-1)^2} + \left(1 - \frac{1}{N-1}\right)^2 \right]y_j = \eta\frac{N-2}{(N-1)^2} y_j.
\end{equation*}
Further conditioning on all particles except the $i$'th, we obtain
\begin{equation}\label{eq:r14}
    \mathbb{P}(\mathbb{E} [z_i - z_j\mid \rmY_{-i}] < 2r^2 \mid  \rmX,\, |y_j|^2 \ge r) \le K r^{-1/4},
\end{equation}
using the elementary bound, the lower bound on the variance of $y_i$ and the conditional lower bound $|y_j|^2 \ge r$.

Now consider the empirical std $s = C^{1/2}(\rmY)$ to bound the conditional std of $\rmZ$. Conditioned on $\rmY$, we have the representation
\begin{equation*}
    z_i - z_j = \mathbb{E}[z_i - z_j] + W,\quad W \sim \gN(0, 2 \eta (C_{-i}(\rmY) + C_{-j}(\rmY))).
\end{equation*}
Using the second inequality in \labelcref{eq:leaveOneOutIneqs}, conditioned on $\rmY$, $z_i - z_j$ has standard deviation at least $2\sqrt{\eta\kappa }s$. As it is Gaussian, if $s > r^{3/2}$, a uniform bound on the density gives
\begin{align*}
    \mathbb{P}(C^{1/2}(\rmZ) < 2r  \mid \rmY, s > r^{3/2}) &\le \mathbb{P}(|z_i - z_j|^2 < 8N^2r^2  \mid \rmY, s > r^{3/2})\\
    &\le K \frac{r}{s} \le Kr^{-1/2}.
\end{align*}
If $s \le r^{3/2}$, then from the first inequality in \labelcref{eq:leaveOneOutIneqs}, we get
\begin{equation*}
    \Var(W) \le 4 \eta \frac{N}{N-1}r^3.
\end{equation*}
Therefore, $W$ is bounded with high probability:
\begin{equation}\label{eq:WBound}
    \mathbb{P}(W \le r^2 \mid \rmY) \ge 1-2e^{-cr}.
\end{equation}
On the event $\{W \le r^2\} \cap \{\mathbb{E}[z_i - z_j] \ge 2r^2\}$, we have for sufficiently large $r$,
\begin{equation*}
    C^{1/2}(\rmZ) \ge \frac{|z_i - z_j|}{\sqrt{2N}} \ge \frac{r^2}{\sqrt{2N}} \ge 2r,
\end{equation*}
and therefore
\begin{equation*}
    \mathbb{P}(C^{1/2}(\rmZ) \ge 2r \mid s\le r^{3/2}, W \le r^2,\,\mathbb{E}[z_i - z_j] \ge 2r^2) = 1.
\end{equation*}
The condition's complement's probabilities are bounded by \labelcref{eq:WBound,eq:r14}. We now show \labelcref{eq:HighProbabilityLOO} using the following chain of events
\begin{align*}
    &\quad \mathbb{P}(C^{1/2}(\rmY) \ge \sqrt{r},\, C^{1/2}(\rmZ) \ge 2r) & \\
    &\ge 1 -  \mathbb{P}(C^{1/2}(\rmY) \le \sqrt{r}) - \mathbb{P}(C^{1/2}(\rmY) \ge \sqrt{r},\, C^{1/2}(\rmZ) \le 2r)\\
    &\ge 1 - Kr^{-1/2} - \mathbb{P}(C^{1/2}(\rmY) > r^{3/2},\, C^{1/2}(\rmZ) \le 2r) - \mathbb{P}(C^{1/2}(\rmY) \le r^{3/2},\, C^{1/2}(\rmZ) \le 2r)\\
    &\ge 1 - Kr^{-1/2} - Kr^{-1/2} - (2e^{-cr} + Kr^{-1/4})\\ 
    &\ge 1 - Kr^{-1/4}
\end{align*}
as desired. We can now apply a union bound for sufficiently large initial $r$. Induction gives that the following recursion holds
\begin{equation*}
    C^{1/2}(\rmX^{(2k)}) \ge 2^k r_0
\end{equation*}
with probability at least 
\begin{equation*}
    1 - \sum_{k=0}^\infty K(2^kr_0)^{-1/4} = 1 - \frac{Kr_0^{-1/4}}{1 - 2^{-1/4}}.
\end{equation*}
Choosing a sufficiently large $r_0 = C^{1/2}(\rmX^{(0)})$ yields a positive probability that the covariance tends to infinity. Thus the discretized leave-one-out scheme \labelcref{eq:leaveoneout} also diverges with positive probability.

\subsection{Modified diffusion}\label{ssec:adapTime}
To address the divergence in discrete time, we introduce a regularization on both the drift and diffusion to cap the large-covariance regime. For a scalar function $\gamma:\R_+ \rightarrow \R_+$ evaluated at $T = \Tr(C(\rmX))$, we propose the following modification:
\begin{subequations} \label{eqs:scalingDiffusionDefinition}
\begin{gather}
    \dd{\rvx_i} = \left[-\gamma C \nabla f(\rvx_i) + B(C) (\rvx_i - \bar \rvx)\right] \dd{t} + \sqrt{2 \gamma C} \dd{W_i},\quad i=1,...,N,\\
    B(C) \coloneqq \frac{1}{N} \left[(d+1)\gamma I_d + 2 \gamma' C\right]. \label{eq:BDef}
\end{gather}
\end{subequations}
When $\gamma \equiv 1$ and $\gamma'=0$, \labelcref{eqs:scalingDiffusionDefinition} recovers the original dynamics \labelcref{eq:aldiDynamics}. The modified dynamics \labelcref{eqs:scalingDiffusionDefinition} are in general no longer scale invariant, but are still invariant under translation and orthogonal transformations.

Motivated by the divergence in \Cref{ssec:transient}, we use the following regularization factor and
its derivative: for a fixed $\theta>0$,
\begin{equation}\label{eq:timeScaleDef}
    \gamma: [0, \infty)\rightarrow (0, 1],\quad \gamma(T) = \frac{1}{1 + \theta T},\quad \gamma'(T) = -\theta \gamma^2.
\end{equation}

For small $T$, the factor $\gamma(T)$ is close to one; for large $T$, it keeps $\gamma(T)C$ bounded. We first verify that the modified diffusion preserves the target distribution and then prove geometric ergodicity for the diffusion and its discretization.
\begin{lemma}\label{lem:generatorALDITimescale}
    Let $\gL$ and $\bar \gL$ be the generators of the diffusions \labelcref{eq:aldiDynamics,eqs:scalingDiffusionDefinition} respectively. Letting $\Gamma$ be the carr\'e-du-champ operator of $\gL$,
    \begin{equation*}
    \Gamma(u,v) = \sum_i (\nabla_i u)^\top C (\nabla_i v),
    \end{equation*}
    the two generators are related by
    \begin{equation*}
        \bar \gL V = \gamma \gL V + \gamma' \Gamma(\Tr C, V). 
    \end{equation*}
\end{lemma}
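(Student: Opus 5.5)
The identity follows by writing out $\bar\gL V$ from \labelcref{eqs:scalingDiffusionDefinition} and comparing it term by term with \labelcref{eq:ALDIGeneratorDef}. The diffusion matrix of particle $i$ is now $\gamma C$, and the drift is $-\gamma C\nabla f(\rvx_i) + B(C)(\rvx_i-\bar\rvx)$ with $B(C) = \frac{1}{N}[(d+1)\gamma I_d + 2\gamma' C]$. Hence
\begin{equation*}
    \bar\gL V = \sum_i \Big(-\gamma C\nabla f(\rvx_i) + \tfrac{(d+1)\gamma}{N}(\rvx_i-\bar\rvx)\Big)\cdot\nabla_i V + \sum_i \gamma C\!:\!\nabla_i^2 V + \frac{2\gamma'}{N}\sum_i \big(C(\rvx_i-\bar\rvx)\big)\cdot\nabla_i V .
\end{equation*}
The first two sums are exactly $\gamma$ times the expression \labelcref{eq:ALDIGeneratorDef}, since $\gamma$ is a scalar and factors out of every term. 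So $\bar\gL V = \gamma\gL V + \frac{2\gamma'}{N}\sum_i (\rvx_i-\bar\rvx)^\top C\,\nabla_i V$, and it remains to identify the last term with $\gamma'\Gamma(\Tr C, V)$.

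For this, I compute $\nabla_i \Tr C$. Write $\Tr C = \frac1N\sum_j\|\rvx_j-\bar\rvx\|^2$ and differentiate in $\rvx_i$. This gives
\begin{equation*}
    \nabla_i \Tr C = \frac{2}{N}\Big[(\rvx_i-\bar\rvx) - \frac1N\sum_j(\rvx_j-\bar\rvx)\Big] = \frac{2}{N}(\rvx_i-\bar\rvx),
\end{equation*}
where the cross term from differentiating $\bar\rvx$ vanishes because the centered particles sum to zero. Substituting into the carré-du-champ, and using that $C$ is symmetric,
\begin{equation*}
    \Gamma(\Tr C, V) = \sum_i (\nabla_i\Tr C)^\top C\,\nabla_i V = \frac{2}{N}\sum_i (\rvx_i-\bar\rvx)^\top C\,\nabla_i V,
\end{equation*}
which matches the leftover term once multiplied by $\gamma'$.

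None of the steps is a real obstacle; the computation is mechanical. The only points needing care are the vanishing of the mean-derivative term in $\nabla_i\Tr C$ and keeping the normalization consistent: the diffusion coefficient $\sqrt{2\gamma C}$ contributes $\gamma C\!:\!\nabla_i^2V$ with no factor $\tfrac12$, consistent with \labelcref{eq:ALDIGeneratorDef}.
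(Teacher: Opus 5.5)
Your proposal is correct and follows the same direct computation as the paper. You expand $\bar\gL V$, factor out $\gamma$, and identify the leftover term $\frac{2\gamma'}{N}\sum_i (\rvx_i-\bar\rvx)^\top C\,\nabla_i V$ with $\gamma'\Gamma(\Tr C, V)$ via $\nabla_i \Tr C = \frac{2}{N}(\rvx_i-\bar\rvx)$, and you also make explicit that the mean-derivative term vanishes, which the paper leaves implicit.
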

\begin{proof}
    Direct computation yields
    \begin{align*}
        \bar \gL V - \gamma \gL V &=  \sum_j \left(\frac{2}{N} \gamma' C(\rvx_j - \bar \rvx)\right) \cdot \nabla_j V\\
        &=  \gamma' \sum_j (\nabla_j \Tr C)^\top C \nabla_jV = \gamma' \Gamma(\Tr C,V).
    \end{align*}
\end{proof}
The correction term in $B(C)$ is derived such that the target distribution $\Pi_*$ is a stationary distribution of the scaled dynamics.

\begin{proposition}
    For the scaled diffusion \labelcref{eq:timeScaleDef}, $\Pi_*$ is a stationary distribution of the diffusion \labelcref{eqs:scalingDiffusionDefinition}.
\end{proposition}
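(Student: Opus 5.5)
Our plan is to verify that $\bar\gL^*\Pi_* = 0$ weakly, i.e. $\int \bar\gL V \,\dd{\Pi_*} = 0$ for all smooth compactly supported test functions $V$ on $\mathrm{M}$. We will use \Cref{lem:generatorALDITimescale}, which gives $\bar\gL V = \gamma\gL V + \gamma'\Gamma(\Tr C, V)$, together with the known fact that $\Pi_*$ is invariant for $\gL$ \cite{garbuno2020affine}. A direct Fokker--Planck computation would also work, but the generator identity packages the correction term neatly.

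\textbf{Step 1: symmetric form of $\gL$.} Since $\Pi_*$ is invariant for $\gL$, $\gL$ should be the reversible (symmetric) operator associated with the Dirichlet form $\gE(u,v)=\int \Gamma(u,v)\,\dd{\Pi_*}$. Concretely, one checks that
\[
\sum_i \nabla_i\cdot(C\nabla_i V) = \sum_i C\!:\!\nabla_i^2 V + \sum_i (\nabla_i\cdot C)\cdot\nabla_i V,
\]
where $\nabla_i \cdot C$ denotes the vector with components $\sum_k \partial_{x_{i,k}} C_{k\ell}$. A short computation gives $\sum_k \partial_{x_{i,k}}C_{k\ell} = \frac{d+1}{N}(x_{i,\ell}-\bar x_\ell)$, using $\partial_{x_{i,k}} C_{k\ell} = \frac1N[(x_{i,\ell}-\bar x_\ell) + \delta_{kk}(x_{i,k}-\bar x_k)\ldots]$ summed over $k$. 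This should yield
\[
\gL V = e^{\sum f}\sum_i \nabla_i\cdot\big(e^{-\sum f} C\nabla_i V\big),
\]
so that $\int u\,\gL V\,\dd{\Pi_*} = -\int \Gamma(u,V)\,\dd{\Pi_*}$ for compactly supported $V$. This is the main bookkeeping step; the only delicacy is verifying the divergence of $C$ matches the $\frac{d+1}{N}$ correction, which is exactly what makes $\gL$ $\Pi_*$-invariant.

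\textbf{Step 2: conclude.} Apply Step 1 with $u=\gamma(\Tr C)$, which is smooth on $\mathrm{M}$:
\[
\int \gamma\,\gL V\,\dd{\Pi_*} = -\int \Gamma(\gamma(\Tr C),V)\,\dd{\Pi_*} = -\int \gamma'\,\Gamma(\Tr C, V)\,\dd{\Pi_*},
\]
by the chain rule for the first-order bilinear form $\Gamma$. Adding $\int\gamma'\Gamma(\Tr C,V)\,\dd{\Pi_*}$ gives $\int\bar\gL V\,\dd{\Pi_*}=0$. In fact this shows $\bar\gL V = e^{\sum f}\sum_i\nabla_i\cdot(e^{-\sum f}\gamma C\nabla_i V)$, so the modified dynamics are reversible with respect to $\Pi_*$.

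\textbf{Step 3: from infinitesimal to genuine invariance.} Passing from $\int\bar\gL V\,\dd{\Pi_*}=0$ for test functions to stationarity of the semigroup needs non-explosion and well-posedness on $\mathrm{M}$. For now we will cite the standard argument (as in \cite{garbuno2020affine}), which uses a Lyapunov function for the regularized dynamics; this will be supplied by the later geometric ergodicity results. The main obstacle is therefore not this algebra but this functional-analytic upgrade. The algebraic core is Step 1's divergence identity, and we expect it to go through directly.
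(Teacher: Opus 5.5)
Your proposal is correct and is essentially the paper's argument: the paper also writes $\gL V = \Pi_*^{-1}\sum_i \nabla_i\cdot(\Pi_* C\nabla_i V)$ and takes adjoints in \Cref{lem:generatorALDITimescale}, so that $\gL^\dagger(\gamma\Pi_*) = \Pi_*\gL\gamma$ cancels the adjoint of the $\gamma'\Gamma(\Tr C,\cdot)$ term. This is the strong form of your weak-form integration by parts with $u=\gamma(\Tr C)$; your Step 3 caveat about upgrading to semigroup invariance is left implicit in the paper, and your displayed derivative is garbled but fixable: it should read $\partial_{x_{i,m}}C_{k\ell} = \frac1N[\delta_{mk}(x_{i,\ell}-\bar x_\ell)+\delta_{m\ell}(x_{i,k}-\bar x_k)]$, whose contraction over $m=k$ gives the claimed $\frac{d+1}{N}(x_{i,\ell}-\bar x_\ell)$.
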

\begin{proof}
    The original generator $\gL$ can equivalently be written in divergence form as 
    \begin{align*}
        \gL V &= \sum_i \left[-C\nabla f(\rvx_i) + \nabla_i \cdot C\right] \cdot \nabla_i V + \sum_i\Tr(C \nabla_i^2 V)\\
        &= \frac{1}{\Pi_*} \sum_i \nabla_i \cdot (\Pi_* C \nabla_i V).
    \end{align*}
 Taking adjoints in the \Cref{lem:generatorALDITimescale} yields
    \begin{align*}
        \bar \gL^\dagger \Pi_* &= \gL^\dagger (\gamma \Pi_*) - \sum_i \nabla_i \cdot (C (\nabla_i \gamma) \Pi_*)\\
        &= \Pi_* \gL \gamma - \sum_i \nabla_i \cdot (C (\nabla_i \gamma) \Pi_*) = 0.
    \end{align*}
\end{proof}

Geometric ergodicity of the continuous time modified diffusion can be proved similarly to \Cref{thm:ctsGeoErgo}. In particular, the regularized dynamics also satisfy a Foster--Lyapunov condition with the same Lyapunov function.

\begin{theorem}\label{thm:timeScaleErgo}
    Assume that $f$ satisfies \Cref{assmp:distantConvexityND}, and let $W_a$ be the Lyapunov function \labelcref{eq:LyapunovWa}. Then for $N \ge d+3$ and sufficiently small $a>0$, the generator of the regularized diffusion \labelcref{eqs:scalingDiffusionDefinition,eq:timeScaleDef}
    satisfies the Foster--Lyapunov condition
    \begin{equation}
        \bar \gL W_a \le -c W_a + b \mathbf{1}_\mathrm{K}
    \end{equation}
    for some $c>0,\, b \in \R$ and some compact $\mathrm{K} \subset M$. In particular, the diffusion is geometrically ergodic with stationary distribution $\Pi_*$.
\end{theorem}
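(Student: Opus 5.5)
We plan to reduce the computation to the one already carried out for \Cref{thm:ctsGeoErgo} via \Cref{lem:generatorALDITimescale}, which gives
\begin{equation*}
    \bar \gL W_a = \gamma \gL W_a + \gamma' \Gamma(\Tr C, W_a), \qquad \gamma' = -\theta\gamma^2 .
\end{equation*}
The first step is to compute the carr\'e-du-champ term explicitly. Since $\nabla_j \Tr C = \frac{2}{N}(\rvx_j - \bar\rvx)$, we have $\Gamma(\Tr C, W_a) = \frac{2}{N}\sum_j (\rvx_j-\bar\rvx)^\top C \nabla_j W_a$. For the gradient, write $\nabla_j W_a = W_a\big(\tfrac{a}{N}\nabla f(\rvx_j)\big) + e^{\frac aN\sum f}\nabla_j \Tr(C^{-1})$. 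We have $\nabla_j \Tr(C^{-1}) = -\frac{2}{N} C^{-2}(\rvx_j-\bar\rvx)$, and $\sum_j (\rvx_j-\bar\rvx)^\top C C^{-2}(\rvx_j-\bar\rvx) = N\Tr(C^{-1}) \cdot$ const. Hence the inverse-covariance part of $\Gamma$ equals a constant times $-\frac{4}{N} e^{\frac aN\sum f}\Tr(C)\cdot$ (bounded ratio) — more precisely $-\frac{4}{N}e^{\frac aN \sum f}\, d$. Multiplied by $\gamma'<0$, this contributes a term of size $\theta\gamma^2 \frac{4d}{N} e^{\frac aN\sum f}$. This term is positive but has \emph{no} $C^{-1}$ factor, so it is dominated by $W_a$ times something that vanishes near $\partial \mathrm{M}$, and it is bounded by $\theta\gamma^2 \cdot O(e^{\frac aN\sum f}) \le O(W_a \Tr(C)\gamma^2)/d$. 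Since $\gamma^2 \Tr C \le \gamma/\theta$, this is at most of order $\gamma W_a$ with a constant we must compare against the negative drift. The $f$-part of $\Gamma$ is $\frac{2a}{N^2}W_a\sum_j(\rvx_j-\bar\rvx)^\top C\nabla f(\rvx_j)$; times $\gamma'$, this is $-\theta\gamma^2\frac{2a}{N^2}W_a\sum_j \ldots$. By distant convexity (as in \Cref{prop:DistantConvexity}), $\sum_j (\rvx_j-\bar\rvx)^\top C \nabla f(\rvx_j) \ge \mu' N\Tr(C^2) - b'$ up to lower-order terms, so this piece is either helpful or bounded by $a\,\gamma^2\theta\, b' W_a$, which is small for small $a$.

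The second step is to reuse the estimate from the proof of \Cref{thm:ctsGeoErgo} in the sharper form it actually produces. I expect that proof to yield something like
\begin{equation*}
    \gL W_a \le -\big(c_0 \Tr(C^{-1})\cdot\text{(boundary term)} + c_0 a\,\Tr(C)\cdot\text{(coercive term)}\big) W_a + b\mathbf 1_{\mathrm K},
\end{equation*}
with the negative drift blowing up either near $\partial\mathrm M$ or as $\rmX\to\infty$, rather than merely $\le -cW_a$. After multiplication by $\gamma$, we need the negative part to stay uniformly bounded away from zero relative to $W_a$, and to beat the $\Gamma$ corrections. Near the boundary, $\gamma\approx 1$, so the proof of \Cref{thm:ctsGeoErgo} applies verbatim and the extra terms are $O(W_a)$ against a drift of order $-\lambda_{\min}(C)^{-1}W_a$. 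In the regime $\Tr C\to\infty$, $\gamma\sim 1/(\theta\Tr C)$, and the negative drift $\gamma\cdot a\,\Tr(C^2)$-type terms (from the $e^{\frac aN\sum f}$ factor, via $-\frac{a}{N}\sum_j \nabla f^\top C\nabla f$) remain of order $\Tr(C)\gtrsim$ large. When $\Tr C$ is bounded but the ensemble mean runs to infinity, $\gamma$ is bounded below and the coercive term $-\frac aN\gamma\sum_j\nabla f(\rvx_j)^\top C\nabla f(\rvx_j)$ together with $\Tr(C)\Tr(C^{-1})\ge d^2$ handles it as before.

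The main obstacle will be this intermediate regime: making sure the positive $\theta\gamma^2\frac{4d}{N}e^{\frac aN\sum f}$ and $\gamma$-weighted positive terms from $\gL W_a$ (e.g.\ the $(d+1)$ correction and $N$-dependent Itô terms, where $N\ge d+3$ was used) are still dominated after the rescaling by $\gamma$. I will try first to bound every positive term by $\gamma W_a$ times a constant independent of $\theta$, using $\theta\gamma^2\Tr C\le\gamma$, and then show that the negative drift in \Cref{thm:ctsGeoErgo} is itself of the form $-\gamma\,\Phi(\rmX)W_a$ with $\Phi\to\infty$ at $\partial\mathrm M$ and at infinity; if $\Phi$ only tends to a constant, choose $a$ small so that constant wins. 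Finally, geometric ergodicity follows exactly as in \Cref{cor:geoErgodicWa}: $W_a$ is norm-like, the diffusion is elliptic on $\mathrm M$ (since $\gamma C\succ 0$), $\Pi_*$ is invariant by the preceding proposition, and \cite[Thm. 6.1]{meyn1993stability} applies.
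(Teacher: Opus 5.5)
Your architecture matches the paper's: \Cref{lem:generatorALDITimescale}, the explicit value $\Gamma(\Tr C,W_a)=\frac{2a}{N}W_a\Tr(CK)-\frac{4d}{N}e^{aE}$, and a split into regimes. Your final value is right; the intermediate sum is simply $\sum_j\boldsymbol{\delta}_j^\top C^{-1}\boldsymbol{\delta}_j=Nd$. The gap is in how you close the estimate. You expect the drift inherited from \Cref{thm:ctsGeoErgo} to blow up at $\partial\mathrm{M}$ (\emph{a drift of order $-\lambda_{\min}(C)^{-1}W_a$}), and you assert $\gamma\approx 1$ there. Both are false.
\begin{itemize}
\item By \labelcref{eq:LWaoverWa}, $\gL W_a/W_a$ stays bounded as $J\to\infty$ with $T,E$ bounded. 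Up to $O(a)$ terms and an $O(\sqrt{T/J})$ term, it equals the constant $-\lambda=-(2-\frac{2d+4}{N})$, which does not depend on $a$.
\item For $d\ge 2$, $\partial\mathrm{M}$ contains flat ensembles with $T$ arbitrarily large, where $\gamma\to 0$.
\end{itemize}
Your fallback is to bound every positive term by a constant times $\gamma W_a$ and then take $a$ small so that the negative constant wins. This fails for the new term $\frac{4d}{NJ}|\gamma'|W_a$, because that term is independent of $a$. The supremum of $\frac{4d|\gamma'|}{NJ\gamma}=\frac{4d\theta\gamma}{NJ}$ over $\mathrm{M}$ is $\frac{4}{Nd}$: this follows from $J\ge d^2/T$ and $\theta\gamma T<1$, and is approached by isotropic $C$ with $T\to\infty$. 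That value is below $\lambda$ only if $N>d+2+\frac{2}{d}$, which excludes $(d,N)=(1,4),(1,5),(2,5)$, all allowed by $N\ge d+3$. The inherited term $(2-\frac{4a}{N})\gamma\Tr(KC^{-1})/J$ has the same defect: it is neither $O(a)$ nor uniformly $O(\gamma)$.

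What closes the argument is the ordered partition used in the paper, which never compares these terms with $-\lambda\gamma$ globally.
\begin{enumerate}
\item Treat $\{T\ge T_0\}$ with no restriction on $J$. There $\gamma T\le\theta^{-1}$, $\gamma\sqrt{T/J}\le\gamma T/d$, $|\gamma'|T^2\le\theta^{-1}$ and $\frac{4d}{NJ}|\gamma'|\le\frac{4}{Nd}$ are all bounded, while $-c_aC_T\gamma T^2$ behaves like $-c_aC_TT/\theta$.
\item On $\{T\le T_0\}$ one has $\gamma\ge\gamma_0\coloneqq\gamma(T_0)$ and $|\gamma'|\le\theta$. So $\frac{4d\theta}{NJ}$ and $2L\sqrt{T_0/J}$ tend to $0$ as $J\to\infty$ and lose to $-\frac{3}{4}\lambda\gamma_0$. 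This is precisely the vanishing near $\partial\mathrm{M}$ that you noticed in your first step but do not use at the end.
\item On $\{T\le T_0,\ J\le J_0\}$, use $D\ge(c_2E-b_2)/J_0$ and take $E_0$ large.
\end{enumerate}
Smallness of $a$ is needed only to absorb the $O(a)$ terms $aP$, $c_aB_T$ and $\frac{2a}{N}|\gamma'||\Tr(CK)|$ into $-\frac{1}{2}c_aC_T\gamma T^2$.

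For the last of these, your claimed lower bound $\Tr(CK)\gtrsim\Tr(C^2)-b'$ does not follow from \Cref{prop:DistantConvexity}, which only controls the unweighted $\Tr K$. Indeed, $v^\top\bar H Cv$ can be negative for $\bar H,C\succ 0$ when $d\ge 2$. The bound is also unnecessary: $|\Tr(CK)|\le LT^2$ and $|\gamma'|T\le\gamma$ leave a term $\frac{2aL}{N}\gamma T$ that the quadratic absorbs.
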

The proof is deferred to \Cref{ssec:CtsTimeScaled}.

\subsection{Discrete-time geometric ergodicity}\label{ssec:discAdapGeo}
The forward Euler–Maruyama discretization of
the regularized diffusion \labelcref{eqs:scalingDiffusionDefinition} is defined as follows. For a step size $\eta>0$, let $C_k$ be the covariance at iteration $k$, and $B_k = B(C_k), \gamma_k = \gamma(\Tr C_k), \gamma'_k = \gamma'(\Tr C_k)$ as given in \labelcref{eq:BDef,eq:timeScaleDef}. The updates are
\begin{equation}\label{eq:discreteTimeScaleDef}
    \rvx_{j}^{(k+1)} = \rvx_j^{(k)} + \eta \left[-\gamma_k C_k \nabla f(\rvx_j^{(k)}) + B_k (\rvx_j^{(k)} - \bar \rvx^{(k)})\right] + \sqrt{2 \eta \gamma_k C_k} \xi_j^{(k)}
\end{equation}
for some i.i.d. $d$-dimensional standard Gaussians $\xi_j^{(k)}$. The introduction of the scaling $\gamma$ allows us to show that the drift scales at most linearly as $\|\rmX\|_F \rightarrow \infty$. Furthermore, since we may choose the same Lyapunov function for all sufficiently small step size, we can obtain convergence of the stationary distributions.

\begin{theorem}\label{thm:discreteTimeScaleErgo}
    Assume $f$ satisfies \Cref{assmp:distantConvexityND}, that $N \ge d+3$, and let $W_a$ be the Lyapunov function \labelcref{eq:LyapunovWa}. There exists constants $a,b,c>0$, compact $\mathrm{K} \subset M$, and a threshold $\eta_* >0$ such that for all $0<\eta < \eta_*$, the Markov kernel $Q_\eta$ of the discretized regularized diffusion \labelcref{eq:discreteTimeScaleDef} satisfies
    \begin{equation} \label{eq:DiscreteTimeLyapunovDecay}
        Q_\eta W_a \le (1 - c\eta) W_a + b \eta \mathbf{1}_{\mathrm{K}}.
    \end{equation}
    In particular, there exists a unique  invariant probability distribution $\Pi_{*, \eta}$ such that for any initial state $\rmX \in \mathrm{M}$, $Q_\eta^k (\rmX, \cdot)$ converges geometrically in total variation to $\Pi_{*, \eta}$ as $k \rightarrow \infty$. Furthermore, $\Pi_{*, \eta} \rightharpoonup \Pi_*$ weakly as $\eta \rightarrow 0$.
\end{theorem}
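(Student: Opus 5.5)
The plan has three parts: a local expansion of $Q_\eta W_a$ around $W_a + \eta\bar\gL W_a$, separate treatment of the three regimes (bounded region, large ensembles, near-degenerate covariance), and then Meyn--Tweedie plus a tightness argument for the invariant measures.

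\textbf{Local expansion.} Write one step as $\rmX' = \rmX + \eta D(\rmX) + \sqrt{2\eta}\,\Sigma(\rmX)\Xi$, where $D$ is the regularized drift and $\Sigma\Sigma^\top = \gamma C$ acts blockwise. I aim for an Itô--Taylor-type expansion
\[
Q_\eta W_a(\rmX) = W_a(\rmX) + \eta\,\bar\gL W_a(\rmX) + R_\eta(\rmX),
\]
with a remainder satisfying $|R_\eta(\rmX)| \le \epsilon(\eta)\,\eta\, W_a(\rmX) + b'\eta$, where $\epsilon(\eta)\to0$. Combined with \Cref{thm:timeScaleErgo}, this gives $Q_\eta W_a \le (1 - c\eta + \epsilon(\eta)\eta)W_a + b''\eta\mathbf{1}_{\mathrm{K}'}$. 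Choosing $\eta_*$ small so that $\epsilon(\eta)\le c/2$ yields \labelcref{eq:DiscreteTimeLyapunovDecay}.

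\textbf{Bounding $R_\eta$ in the three regimes.}
\begin{enumerate}[(i)]
\item \emph{Bounded region.} On compact subsets of $\mathrm{M}$, $W_a$ is smooth and the increments are Gaussian with bounded moments. A third-order Taylor expansion then bounds the remainder by $O(\eta^{3/2})$, uniformly.
\item \emph{Large $\|\rmX\|_F$.} Since $\gamma C \preceq \theta^{-1} I$ and $\gamma\Tr C \le \theta^{-1}$, the drift grows at most linearly: $\|D(\rmX)\|\lesssim 1+\|\rmX\|_F$, and the noise is bounded. The exponential factor $e^{\frac aN\sum f}$ grows like $e^{a L\|x\|^2/(2N)}$. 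Its one-step expectation is controlled by Gaussian moment generating function bounds, which are finite once $a\eta\theta^{-1}L$ is small. Strong convexity outside a ball, through \Cref{prop:DistantConvexity}, turns the deterministic part into a contraction: $\sum_i f(\rvx_i + \eta D_i) \le \sum_i f(\rvx_i) - \eta\gamma\sum_i\nabla f^\top C\nabla f + O(\eta^2 \gamma^2\|C\nabla f\|^2)$. The $\eta^2$ term is absorbed because $\gamma C$ is bounded.
\item \emph{Near the boundary $\det C\to0$.} This is where I expect the real difficulty, handled in the next paragraph.
\end{enumerate}

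\textbf{Controlling $\Tr(C'^{-1})$ near collapse.} Conditionally on $\rmX$, the new centered ensemble is
\[
\rmY' = \rmY\bigl(I + \eta M\bigr)^\top + \sqrt{2\eta\gamma}\,P\,\Xi\, S^\top\text{-type noise},
\]
i.e.\ a linear image of the old deviations plus noise lying in the column span of $S$. The key structural observation is that both the drift and the noise act in the span of the current deviations, through $C\nabla f$ and $\sqrt{C}$. Using affine invariance of $\Tr(C^{-1})$-ratios, I whiten: set $\tilde\rvx = C^{-1/2}(\rvx - \bar\rvx)$. In whitened coordinates the update becomes an $O(\eta)$-perturbation of an identity-covariance ensemble, with Gaussian noise of size $\sqrt{\eta}$, and the whitened drift $C^{1/2}\nabla f$ is bounded by the Lipschitz assumption. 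The quantity $C'^{-1}$ then equals $C^{-1/2}\tilde C'^{-1}C^{-1/2}$, where $\tilde C'$ is a Wishart-like random matrix near $I$. I need
\[
\mathbb{E}\,\Tr(\tilde C'^{-1}A) \le \Tr(A)\bigl(1 + \eta\,\mathrm{(generator\ term)} + o(\eta)\bigr)
\]
uniformly in the whitened state. This needs integrability of the inverse of a noncentral Wishart matrix with $N-1$ degrees of freedom. The condition $N\ge d+3$ is exactly what makes $\mathbb{E}\,\tilde C'^{-1}$, and a little more, finite. On the event where $\tilde C'$ is far from $I$, I use the inverse-moment bound together with Gaussian tail probabilities to bound the contribution by $o(\eta)\Tr(C^{-1})$. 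On the complement, a second-order expansion of the matrix inverse reproduces $\eta\bar\gL$ applied to $\Tr C^{-1}$. The exponential factor is handled jointly by Cauchy--Schwarz, using the bounded-noise estimates from the large-ensemble regime.

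\textbf{Consequences.} Given the drift condition, the chain is Feller. The Gaussian transition has a positive density on $\mathrm{M}$ since $\gamma C\succ0$, which gives $\psi$-irreducibility and aperiodicity, and compact subsets of $\mathrm{M}$ are small sets. Meyn--Tweedie \cite[Sec.~6]{meyn1992stability1} then yields a unique $\Pi_{*,\eta}$ and geometric convergence in total variation. For weak convergence, integrate \labelcref{eq:DiscreteTimeLyapunovDecay} against $\Pi_{*,\eta}$ to get $\Pi_{*,\eta}(W_a)\le b/c$, uniformly in $\eta$. Since $W_a$ is norm-like, the family is tight in $\mathrm{M}$. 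For any limit point $\nu$ and any $\varphi\in C_c^\infty(\mathrm{M})$, the expansion gives $(Q_\eta\varphi-\varphi)/\eta\to\bar\gL\varphi$ locally uniformly, so $\nu(\bar\gL\varphi)=0$. Hence $\nu$ is invariant for the diffusion, and uniqueness from \Cref{thm:timeScaleErgo} forces $\nu=\Pi_*$.
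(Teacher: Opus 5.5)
Your whitening step, your reading of $N\ge d+3$ as integrability of slightly more than the first inverse moment, and your Meyn--Tweedie and tightness conclusion all match the paper. The way you assemble the drift inequality, however, has two genuine gaps.

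\textbf{The additive expansion fails.} The expansion $Q_\eta W_a=W_a+\eta\bar\gL W_a+R_\eta$ with $R_\eta\le\epsilon(\eta)\eta W_a+b'\eta$ is false globally, so you cannot use the final inequality of \Cref{thm:timeScaleErgo} as a black box. Take a fixed nondegenerate ensemble and translate it so that $\bar\rvx\to\infty$. Then $T,J,\gamma,\gamma'$ stay fixed, but $D\ge\lambda_{\min}(C)\frac1N\sum_i\|\rvg_i\|^2\to\infty$. Hence $\bar\gL W_a/W_a=-c_a\gamma D+O(1)\to-\infty$. Since $Q_\eta W_a\ge0$, this forces $R_\eta/W_a\ge-1-\eta\bar\gL W_a/W_a\to+\infty$ for every fixed $\eta$, while $b'\eta/W_a\to0$. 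The one-step factor from the exponential weight is really of size $e^{-c_a\eta\gamma D}$, not $1-c_a\eta\gamma D$. Because $\gamma D$ is unbounded, the linearization error is as large as the main term. Moreover, the genuine higher-order errors are $O(\eta^2(1+\gamma D))W_a$, coming from $\frac L2\eta^2\|\rmH\|_F^2$ and from the noise along $\nabla F$. They can only be absorbed by the $-c_a\eta\gamma D$ dissipation, which the inequality $\bar\gL W_a\le-cW_a+b\mathbf 1_{\mathrm K}$ has already discarded. You must keep the bound multiplicative,
\[
\frac{Q_\eta W_a}{W_a}\le\exp\bigl(\eta\gE_a+O(\eta^2(1+\gamma D))\bigr)\bigl(1+\eta\gJ_a+O(\eta^{3/2})\bigr),
\]
and run the $T/J/E$ partition on that expression, as the paper does.

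\textbf{Cauchy--Schwarz cannot couple the two factors.} Your regimes (ii) and (iii) overlap: a far-away mean with a collapsing covariance lies in both. So the coupling of $e^{a(E_+-E)}$ with $J_+/J$ must be controlled uniformly over $\mathrm M$, and Cauchy--Schwarz fails exactly in the range the theorem claims. It needs $\mathbb E(J_+/J)^2<\infty$, a second inverse moment of the Wishart-type matrix $H_\eta$. Since $\mathbb P(\sigma_{\min}(\Lambda_\eta)\le s)$ behaves like $s^{N-d}$ as $s\to0$, this moment is infinite for $N\in\{d+3,d+4\}$. Even formally, taking the square root of the second moment adds $\gamma\eta\,\Gamma(J,J)/J^2=\frac4N\frac{\Tr(C^{-2})}{J^2}\gamma\eta$ to the first-order coefficient. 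When one eigenvalue of $C$ is much smaller than the rest, this beats $-(2-\frac{2d+4}{N})\gamma\eta$ unless $N\ge d+5$.

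\textbf{The missing idea is the paper's exact Gaussian tilt.} The Hessian bound gives
\[
e^{a(E_+-E)}\le\Psi:=\exp\Bigl(\tfrac aN\bigl[\nabla F\!:\!\delta\rmX+\tfrac L2\|\delta\rmX\|_F^2\bigr]\Bigr),
\]
so $Q_\eta W_a/W_a\le\hat Z\,\hat{\mathbb E}[J_+/J]$. Here $\hat Z=\mathbb E\Psi$ is explicit. The tilted law of $\delta\rmX$ is again Gaussian, with mean shifted essentially by $\frac{2a}N\eta\gamma\nabla F\,C$ and covariance governed by $R_\eta$, where $\gamma I\preceq R_\eta\preceq2\gamma I$. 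Then \Cref{lem:uniformBoundUpdates} applies. It is uniform over all whitened states and needs only $\mathbb E\|H_\eta^{-1}\|^q$ for some $q>1$, which is exactly what $N\ge d+3$ gives. The cross term $-\frac{4a}{N}\Tr(KC^{-1})$ then comes out exactly from the shifted mean. H\"older with exponent $q\downarrow1$ on $J_+/J$ and $a$ small could plausibly replace the tilt, but Cauchy--Schwarz cannot.

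\textbf{A smaller point on the weak limit.} You need $(Q_\eta\varphi-\varphi)/\eta-\bar\gL\varphi\to0$ uniformly on all of $\mathrm M$, not just locally uniformly, to pass to the limit against the non-compactly supported $\Pi_{*,\eta}$. The paper controls the region away from $\supp\varphi$ by Gaussian concentration, using $\|\gamma C\|\le\theta^{-1}$ and the linear growth of the drift.
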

The proof is deferred to \Cref{ssec:discreteTimeErgo}. A key technical difficulty is a Taylor expansion with bounded remainder for matrix inverses, utilizing Gaussian concentration for uniform bounds away from $\partial \mathrm{M}$.

\section{Proofs} \label{sec:proofs}
This section proves the main results \Cref{thm:ctsGeoErgo,thm:timeScaleErgo,thm:discreteTimeScaleErgo}. Proofs of auxiliary results will be deferred to the appendix.

For an ensemble $\rmX$, we additionally define the following functions:
\begin{subequations}
\begin{gather}
    E \coloneqq \frac{1}{N} \sum_{i=1}^N f(\rvx_i),\quad T \coloneqq \Tr(C),\quad J \coloneqq \Tr(C^{-1}),\\
    \rvg_i \coloneqq \nabla f(\rvx_i),\quad \bar \rvg \coloneqq \frac{1}{N} \sum_{i=1}^N \rvg_i,\quad D \coloneqq \frac{1}{N}\sum_{i=1}^N \rvg_i^\top C \rvg_i \in \R,\\
    K \coloneqq \frac{1}{N} \sum_{i=1}^N (\rvx_i - \bar \rvx) \rvg_i^\top = \frac{1}{2N^2} \sum_{i,j=1}^N (\rvx_i - \rvx_j) (\rvg_i - \rvg_j)^\top \in \R^{d \times d}.
\end{gather}
\end{subequations}

Without loss of generality, we assume that $f$ is non-negative and that $\nabla f(0) = 0$. We first list some useful trace inequalities and standard results for distantly convex functions.
\begin{lemma}\label{lem:traceIneqs}
    The following trace inequalities hold.
    \begin{align*}
        |\Tr K| &\le LT,  &|\Tr (KC)| &\le LT^2, &|\Tr(KC^2)| &\le LT^3,\\
        |\Tr (KC^{-1})| &\le L\sqrt{TJ},&TJ&\ge d^2 , &J^{-1} |\Tr(KC^{-1})|&\le \frac{LT}{d}. 
    \end{align*}
\end{lemma}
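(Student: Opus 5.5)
The plan is to write $K$ as a weighted sum of rank-one terms that resembles the pairwise representation of $C$, and then bound each trace term by term using the Hessian bound $\sup_x\|\nabla^2 f\|\le L$ from \Cref{assmp:distantConvexityND}. Write $\rvu_{ij}\coloneqq \rvx_i-\rvx_j$. By the fundamental theorem of calculus,
\begin{equation*}
\rvg_i-\rvg_j = H_{ij}\rvu_{ij},\qquad H_{ij}\coloneqq \int_0^1 \nabla^2 f(\rvx_j+s\rvu_{ij})\,\dd{s},
\end{equation*}
where $H_{ij}$ is symmetric with $\|H_{ij}\|\le L$. Substituting this into the pairwise formula for $K$ gives
\begin{equation*}
K=\frac{1}{2N^2}\sum_{i,j}\rvu_{ij}\rvu_{ij}^\top H_{ij},\qquad C=\frac{1}{2N^2}\sum_{i,j}\rvu_{ij}\rvu_{ij}^\top,
\end{equation*}
and in particular $\frac{1}{2N^2}\sum_{i,j}\|\rvu_{ij}\|^2=T$.

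For any matrix $A$, cyclicity of the trace gives
\begin{equation*}
\Tr(KA)=\frac{1}{2N^2}\sum_{i,j}\rvu_{ij}^\top H_{ij}A\rvu_{ij}.
\end{equation*}
If $A$ is positive semidefinite, then $|\rvu^\top H A\rvu|\le L\|A\|\,\|\rvu\|^2$, so $|\Tr(KA)|\le L\|A\|T$.
\begin{itemize}
\item Taking $A=I$ gives $|\Tr K|\le LT$.
\item Taking $A=C$, with $\|C\|\le T$ since $C\succeq 0$, gives $|\Tr(KC)|\le LT^2$.
\item Taking $A=C^2$, with $\|C^2\|\le T^2$, gives $|\Tr(KC^2)|\le LT^3$.
\end{itemize}

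The inverse-covariance bound is the only step needing more than the operator norm, since $\|C^{-1}\|$ is not controlled by $J$ in a useful way. Here $C$ is invertible because we work on $\mathrm{M}$. I would instead bound each term by $|\rvu^\top H C^{-1}\rvu|\le L\|\rvu\|\,\|C^{-1}\rvu\|$ and then apply Cauchy--Schwarz to the weighted sum:
\begin{equation*}
|\Tr(KC^{-1})|\le L\Big(\frac{1}{2N^2}\sum\|\rvu_{ij}\|^2\Big)^{1/2}\Big(\frac{1}{2N^2}\sum\rvu_{ij}^\top C^{-2}\rvu_{ij}\Big)^{1/2}.
\end{equation*}
The second factor equals $\Tr(C^{-2}C)^{1/2}=J^{1/2}$, which gives $|\Tr(KC^{-1})|\le L\sqrt{TJ}$.

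Next, $TJ\ge d^2$ follows from Cauchy--Schwarz on the eigenvalues $\lambda_k>0$ of $C$, namely $(\sum\lambda_k)(\sum\lambda_k^{-1})\ge d^2$.

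Finally, combining the last two bounds,
\begin{equation*}
J^{-1}|\Tr(KC^{-1})|\le L\sqrt{T/J}=\frac{LT}{\sqrt{TJ}}\le \frac{LT}{d}.
\end{equation*}
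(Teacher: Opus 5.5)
Your proof is correct and follows essentially the same route as the paper. In both, the gradient differences are written as an averaged Hessian (norm at most $L$) applied to the displacement, so that $K$ becomes a rank-one sum whose unweighted version is $C$; Cauchy--Schwarz then finishes, using that the $C^{-2}$-weighted sum of squared displacements equals $J$. The only cosmetic difference is that you use pairwise differences $\rvx_i-\rvx_j$, while the paper uses deviations $\boldsymbol{\delta}_i=\rvx_i-\bar\rvx$ and exploits $\sum_i\boldsymbol{\delta}_i=0$ to replace $\rvg_i$ by $\rvg_i-\nabla f(\bar\rvx)$.

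Your uniform bound $|\Tr(KA)|\le L\|A\|T$ also spells out the step the paper compresses into ``H\"older's inequality'' for $\Tr(KC)$ and $\Tr(KC^2)$.
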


\begin{proposition}[Consequences of distant convexity]\label{prop:DistantConvexity}
    Suppose that $f$ satisfies \Cref{assmp:distantConvexityND}. There exist constants $c_1, b_1, c_2, b_2>0$ depending only on $f$ and $d$ such that for all $x \in \R^d$,
    \begin{align*}
        f(x) &\ge c_1 \|x\|^2 - b_1,\\
        \|\nabla f(x) \|^2 &\ge c_2 f(x) - b_2.
    \end{align*}
    In particular, there exist constants $C_T, B_T>0$ such that 
    \begin{align}
        D \ge C_T T^2 - B_T + \bar \rvg^\top C \bar \rvg. \label{eq:DLowerbound}
    \end{align}
    Moreover, for any $J_0>0$, the following holds on the set $\{J \le J_0\}$:
    \begin{align}
        D \ge \frac{c_2}{J_0} E - \frac{b_2}{J_0}.\label{eq:DLowerbound2}
    \end{align}
\end{proposition}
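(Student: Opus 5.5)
The statement has two layers: the pointwise bounds on $f$ and $\nabla f$, and then the ensemble bounds \labelcref{eq:DLowerbound,eq:DLowerbound2} on $D$. The plan is to get the pointwise bounds from \Cref{assmp:distantConvexityND} by elementary calculus. Then the ensemble bounds follow by averaging, using the positive semidefiniteness of $C$ and its relation to the spread of the particles.

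\textbf{Pointwise bounds.} Let $R$ be such that $\nabla^2 f\succeq\mu I_d$ on $\{\|x\|\ge R\}$. The key step is the radial monotonicity estimate
\[
(\nabla f(x)-\nabla f(y))\cdot(x-y)\ge \mu\|x-y\|^2-c_R\|x-y\| .
\]
To prove it, integrate $\nabla^2 f$ along the segment $[x,y]$. The segment spends length at most $2R$ inside the ball, where $\nabla^2 f\succeq -LI_d$; elsewhere it contributes at least $\mu$. This gives the estimate with $c_R=2R(L+\mu)$.
\begin{itemize}
\item Taking $y=0$ and using $\nabla f(0)=0$ yields $\nabla f(x)\cdot x\ge \mu\|x\|^2-c_R\|x\|$.
\item Integrating $\tfrac{d}{dt}f(tx)=\nabla f(tx)\cdot x$ over $t\in[0,1]$ then gives $f(x)\ge \tfrac{\mu}{2}\|x\|^2-c\|x\|-f(0)$. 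Absorbing the linear term via Young's inequality gives $f(x)\ge c_1\|x\|^2-b_1$.
\item The first display and Cauchy--Schwarz give $\|\nabla f(x)\|\ge \mu\|x\|-c_R$, hence $\|\nabla f(x)\|^2\ge \tfrac{\mu^2}{2}\|x\|^2-b$.
\item The upper bound from $\|\nabla^2 f\|\le L$ is $f(x)\le f(0)+\tfrac L2\|x\|^2$. Combining the last two bullets gives $\|\nabla f\|^2\ge c_2 f-b_2$.
\end{itemize}

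\textbf{Bound \labelcref{eq:DLowerbound}.} Decompose $\rvg_i=\bar\rvg+(\rvg_i-\bar\rvg)$. Since $C$ is PSD and $\sum_i(\rvg_i-\bar\rvg)=0$,
\[
D=\bar\rvg^\top C\bar\rvg+\frac1N\sum_i(\rvg_i-\bar\rvg)^\top C(\rvg_i-\bar\rvg).
\]
It remains to bound the second term below by $C_T T^2-B_T$. We use $\lambda_{\max}(C)\ge T/d$ with unit eigenvector $v$, so the second term is at least $\tfrac{T}{dN}\sum_i((\rvg_i-\bar\rvg)\cdot v)^2$. Apply the monotonicity estimate to pairs, with $\sum_i(\rvg_i-\bar\rvg)\cdot v\,(\rvx_i-\bar\rvx)\cdot v$ written as a pair sum. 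This shows that the $\rvg$-spread along $v$ dominates the $\rvx$-spread:
\[
\frac1N\sum_i (\rvg_i-\bar\rvg)\cdot v\,(\rvx_i-\bar\rvx)\cdot v\ \ge\ \mu\, v^\top Cv-c\sqrt{v^\top Cv}.
\]
This is the main obstacle. The monotonicity estimate is along $x_i-x_j$, not along $v$, so the pair argument needs care. One fallback is the projected quantity $v^\top K v$ rather than the full $\Tr K$; another is to work directly with $\Tr(KC)$, which gives $\Tr(KC)\ge \mu\Tr(C^2)-c\,T^{3/2}$ up to lower order.

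Given the display, Cauchy--Schwarz yields $\big(\tfrac1N\sum_i((\rvg_i-\bar\rvg)\cdot v)^2\big)^{1/2}\ge \mu\sqrt{v^\top Cv}-c$. Since $v^\top Cv\ge T/d$, the second term of $D$ is at least $\tfrac Td(\mu\sqrt{T/d}-c)_+^2\ge C_TT^2-B_T$ after Young's inequality.

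\textbf{Bound \labelcref{eq:DLowerbound2}.} On $\{J\le J_0\}$ we have $\lambda_{\min}(C)\ge 1/J\ge 1/J_0$. Hence
\[
D\ge \frac{1}{NJ_0}\sum_i\|\rvg_i\|^2\ge \frac{1}{NJ_0}\sum_i\big(c_2f(\rvx_i)-b_2\big)=\frac{c_2E-b_2}{J_0}.
\]
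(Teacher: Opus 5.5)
Your pointwise bounds (derived from the pairwise monotonicity estimate at $y=0$ rather than the paper's radial argument) and your proof of \labelcref{eq:DLowerbound2} are fine. The only slip is that integrating gives $+f(0)$, not $-f(0)$, which is harmless since $f\ge 0$.

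The gap is in \labelcref{eq:DLowerbound}. The directional inequality you flag as the main obstacle is not just delicate but false. Monotonicity controls only the full inner product $(\rvx_i-\rvx_j)\cdot(\rvg_i-\rvg_j)$. By contrast, $v\cdot(\rvg_i-\rvg_j)$ also involves off-diagonal Hessian entries coupling $v$ to the part of $\rvx_i-\rvx_j$ orthogonal to $v$.

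Here is a counterexample. Take $d=2$ and $f(x)=\frac{\mu}{2}\|x\|^2+M\,h(a\cdot x)$, with $a=(1/10,1)$ and $h\in\gC^2$ convex, $0\le h''\le 1$, and $h'(u)=\max(u,0)$ for $|u|\ge 1$. Then $\mu I_d\preceq\nabla^2 f\preceq(\mu+2M)I_d$ everywhere. Take the ensemble $r(-1,1)$, $r(-1,-1)$, $r(2,0)$ (repeat points for larger $N$). Then $\bar\rvx=0$, $C=r^2\diag(2,2/3)$ and $v=\rve_1$. For $r\ge 5$ the left side of your display equals $r^2(2\mu-M/60)$, which is negative and of order $-r^2$ once $M>120\mu$.

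So no bound $v^\top Kv\ge c'\,v^\top Cv-c\sqrt{v^\top Cv}$ with $c'>0$ can hold, and the route through the top eigenvector cannot be repaired by adjusting constants. Your $\Tr(KC)$ fallback has the same defect. Write $\boldsymbol{\delta}_i=\rvx_i-\bar\rvx$ and $\bar H_i=\int_0^1\nabla^2 f(\bar\rvx+s\boldsymbol{\delta}_i)\,ds$. Then $\Tr(KC)=\frac1N\sum_i\boldsymbol{\delta}_i^\top\bar H_i C\boldsymbol{\delta}_i$, and $\bar H_i C$ need not be positive definite, so monotonicity gives no lower bound of that form.

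The missing idea, which is the paper's route, is to keep the full trace $\Tr K$ (which you mention but set aside). A $C^{-1/2}/C^{1/2}$-weighted Cauchy--Schwarz then absorbs the anisotropy of $C$:
\[
|\Tr K|=\Bigl|\frac1N\sum_i\bigl(C^{-1/2}\boldsymbol{\delta}_i\bigr)\cdot\bigl(C^{1/2}(\rvg_i-\bar\rvg)\bigr)\Bigr|\le\Bigl(\frac1N\sum_i\boldsymbol{\delta}_i^\top C^{-1}\boldsymbol{\delta}_i\Bigr)^{1/2}\Bigl(\frac1N\sum_i(\rvg_i-\bar\rvg)^\top C(\rvg_i-\bar\rvg)\Bigr)^{1/2}=\sqrt{d}\,\sqrt{D-\bar\rvg^\top C\bar\rvg}.
\]
The last equality holds because $\frac1N\sum_i\boldsymbol{\delta}_i^\top C^{-1}\boldsymbol{\delta}_i=\Tr(C^{-1}C)=d$, whatever the spectrum of $C$. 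Summing your own pairwise estimate over all $i,j$ gives $\Tr K=\frac{1}{2N^2}\sum_{i,j}(\rvx_i-\rvx_j)\cdot(\rvg_i-\rvg_j)\ge\mu T-c_R\sqrt{T/2}$. Hence $D-\bar\rvg^\top C\bar\rvg\ge\frac1d\bigl(\mu T-c_R\sqrt{T/2}\bigr)_+^2\ge C_T T^2-B_T$, and no directional information about $\nabla f$ is needed.
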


The proofs are deferred to \Cref{appsec:traceIneqs,appsec:propDistConvex}. This will be useful since the compact sets in \Cref{thm:ctsGeoErgo,thm:timeScaleErgo,thm:discreteTimeScaleErgo} will all take the form $\mathrm{K} = \{T \le T_0, J \le J_0, E \le E_0\}$ for some $T_0, J_0, E_0>0$.

\begin{proposition}\label{prop:KCompact}
    Assuming $f$ satisfies \Cref{assmp:distantConvexityND}, for any $T_0, J_0, E_0 >0$, the set $\mathrm{K} = \{T \le T_0, J \le J_0, E \le E_0\}\subset \mathrm{M}$ is compact.
\end{proposition}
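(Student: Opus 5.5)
We show that $\mathrm{K}$ is closed in $\R^{N\times d}$ and bounded, and that it lies inside $\mathrm{M}$. Compactness in $\R^{N\times d}$ then follows from Heine--Borel. Since $\mathrm{M}$ is open and carries the subspace topology, $\mathrm{K}$ is also compact as a subset of $\mathrm{M}$.

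\emph{Boundedness.} On $\mathrm{K}$ we have $E\le E_0$. By \Cref{prop:DistantConvexity}, $f(x)\ge c_1\|x\|^2-b_1$, so
\[
\|\rmX\|_F^2=\sum_i\|\rvx_i\|^2\le \frac{1}{c_1}\sum_i \big(f(\rvx_i)+b_1\big)= \frac{N(E_0+b_1)}{c_1}.
\]
Hence $\mathrm{K}$ is bounded. The constraint $T\le T_0$ is not needed for this step.

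\emph{Containment in $\mathrm{M}$ and closedness.} The main point is that $J=\Tr(C^{-1})$ is defined only on $\mathrm{M}$. So we must check that a limit of points in $\mathrm{K}$ cannot reach $\partial\mathrm{M}$. By definition $\mathrm{K}\subset\mathrm{M}$. On $\mathrm{M}$, $C$ is positive definite with eigenvalues $\lambda_1,\dots,\lambda_d>0$, so $J=\sum_k\lambda_k^{-1}\ge\lambda_{\min}(C)^{-1}$. Therefore $J\le J_0$ gives $\lambda_{\min}(C)\ge 1/J_0$.

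Now take $\rmX_n\in\mathrm{K}$ with $\rmX_n\to\rmX$ in $\R^{N\times d}$.
\begin{itemize}
\item The map $\rmX\mapsto C(\rmX)$ is polynomial, hence continuous. Since $\lambda_{\min}$ is continuous on symmetric matrices (Weyl), $\lambda_{\min}(C(\rmX))\ge 1/J_0>0$. Thus $\rmX\in\mathrm{M}$.
\item On $\mathrm{M}$, $J$ is continuous because matrix inversion is continuous on invertible matrices, so $J(\rmX)\le J_0$.
\item $T$ is continuous, so $T(\rmX)\le T_0$.
\item $f$ is $\gC^2$, so $E$ is continuous and $E(\rmX)\le E_0$.
\end{itemize}
Hence $\rmX\in\mathrm{K}$, and $\mathrm{K}$ is closed in $\R^{N\times d}$. (If the parameters make $\mathrm{K}$ empty, the statement holds trivially.)

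Combining the two parts, $\mathrm{K}$ is closed and bounded, hence compact. The only delicate step was the uniform eigenvalue lower bound $\lambda_{\min}(C)\ge 1/J_0$ coming from $J\le J_0$, which keeps $\mathrm{K}$ away from $\partial\mathrm{M}$.
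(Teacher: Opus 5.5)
Your proof is correct and follows the same route as the paper: boundedness from $E\le E_0$ and the coercivity bound $f(x)\ge c_1\|x\|^2-b_1$, closedness in $\R^{N\times d}$ from continuity, then Heine--Borel. Your observation that $J\le J_0$ forces $\lambda_{\min}(C)\ge 1/J_0$, so that limits of points in $\mathrm{K}$ cannot reach $\partial\mathrm{M}$, makes precise a step the paper compresses into ``continuous away from the singular boundary.''
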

\begin{proof}
     The bound $E\le E_0$ and coercivity of $f$ make $\mathrm{K}$ bounded in $\R^{N\times d}$. Since $T$, $J$, and $E$ are continuous away from the singular boundary, $\mathrm{K}$ is also closed in $\R^{N\times d}$ and thus compact in $\R^{N\times d}$. As $\mathrm{K}$ is also disjoint from $\partial \mathrm{M}$, normality gives compactness in $\mathrm{M}$.
\end{proof}
\subsection{Proof of Theorem \ref{thm:ctsGeoErgo}} \label{proof:ctsGeoErgo}
We need to compute $\gL \gV$, and begin with $\gL E$.
\begin{lemma}
    The generator applied to $E$ is 
    \begin{align}
        \gL E &= -D + P, \label{eq:LE}\\ 
        P &\coloneqq  \frac{d+1}{N} \Tr K + \frac{1}{N}\sum_i C\! :\! \nabla^2 f(\rvx_i). \label{eq:PDef}
    \end{align}
    Furthermore, if $\|\nabla^2 f\| \le L$, then $P$ can be bounded as 
    \begin{align}
        P \le p_0 T,\quad p_0 = L(1+\frac{d+1}{N}).\label{eq:PUpperBound}
    \end{align}
\end{lemma}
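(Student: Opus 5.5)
The plan is to apply the generator formula \labelcref{eq:ALDIGeneratorDef} directly to $E = \frac{1}{N}\sum_j f(\rvx_j)$. Since $E$ is a sum of single-particle functions, the derivatives are
\[
\nabla_i E = \tfrac{1}{N}\rvg_i, \qquad \nabla_i^2 E = \tfrac{1}{N}\nabla^2 f(\rvx_i),
\]
with no cross terms. Substituting these into \labelcref{eq:ALDIGeneratorDef} gives three contributions, which we handle one at a time.

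\emph{Gradient drift.} The term $-\frac{1}{N}\sum_i (C\rvg_i)\cdot\rvg_i$ is exactly $-D$, by symmetry of $C$.

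\emph{Correction drift.} The term $\frac{d+1}{N}\cdot\frac{1}{N}\sum_i (\rvx_i-\bar\rvx)\cdot\rvg_i$ equals $\frac{d+1}{N}\Tr K$, since $\Tr\big((\rvx_i-\bar\rvx)\rvg_i^\top\big) = (\rvx_i-\bar\rvx)^\top\rvg_i$.

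\emph{Diffusion.} The term $\frac{1}{N}\sum_i C\!:\!\nabla^2 f(\rvx_i)$ is already in final form.

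Adding the three pieces gives \labelcref{eq:LE} with $P$ as in \labelcref{eq:PDef}.

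For the bound \labelcref{eq:PUpperBound}, we estimate the two parts of $P$ separately.

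\emph{Hessian part.} Since $C\succeq 0$ and $\nabla^2 f(\rvx_i)\preceq L I_d$, we have
\[
C\!:\!\nabla^2 f(\rvx_i) = \Tr\big(C^{1/2}\nabla^2 f(\rvx_i)C^{1/2}\big) \le L\Tr C = LT.
\]
We only need the upper bound $\nabla^2 f\preceq LI_d$, which follows from $\|\nabla^2 f\|\le L$. Averaging over $i$ therefore bounds this part by $LT$.

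\emph{Trace part.} By \Cref{lem:traceIneqs}, $|\Tr K|\le LT$, so $\frac{d+1}{N}\Tr K \le \frac{d+1}{N}LT$.

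Summing the two parts gives $P\le L\big(1+\frac{d+1}{N}\big)T = p_0 T$.

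If we want the argument to be self-contained rather than citing \Cref{lem:traceIneqs}, we use the pairwise representation
\[
\Tr K = \frac{1}{2N^2}\sum_{i,j}(\rvx_i-\rvx_j)\cdot(\rvg_i-\rvg_j).
\]
Because $\nabla f$ is $L$-Lipschitz, Cauchy--Schwarz gives $(\rvx_i-\rvx_j)\cdot(\rvg_i-\rvg_j)\le L\|\rvx_i-\rvx_j\|^2$. Then the identity $\frac{1}{2N^2}\sum_{i,j}\|\rvx_i-\rvx_j\|^2 = T$ yields $\Tr K\le LT$.

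No step here is a real obstacle. The only point needing care is the sign convention: the term $C\!:\!\nabla_i^2$ in \labelcref{eq:ALDIGeneratorDef} already absorbs the factor $\tfrac12\cdot 2$ coming from the noise $\sqrt{2C}$, so the diffusion contribution carries coefficient $1$.
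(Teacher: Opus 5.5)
Your proposal is correct and follows the paper's own proof: substitute $\nabla_i E = \tfrac{1}{N}\rvg_i$ and $\nabla_i^2 E = \tfrac{1}{N}\nabla^2 f(\rvx_i)$ into the generator, then bound $P$ using $|\Tr K|\le LT$ from \Cref{lem:traceIneqs} together with $C\!:\!\nabla^2 f(\rvx_i)\le LT$. Your optional self-contained derivation of $\Tr K\le LT$ through the pairwise representation of $K$ is also valid.
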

\begin{proof}
    Using the generator \labelcref{eq:ALDIGeneratorDef}, we can compute
    \begin{align*}
        \gL E &= \sum_i (-C \nabla f(\rvx_i) + \frac{d+1}{N} (\rvx_i - \bar \rvx)) \cdot \left(\frac{1}{N}\nabla f(\rvx_i)\right) + \frac{1}{N} \sum_i C \!:\! \nabla^2 f(\rvx_i)\\
        &= -\frac{1}{N} \sum_i \rvg_i^\top C \rvg_i + \frac{d+1}{N^2} \sum_i (\rvx_i - \bar \rvx) \cdot \rvg_i + \frac{1}{N} \sum_i C \!:\! \nabla^2 f(\rvx_i)\\
        &= -D + \frac{d+1}{N} \Tr(K) + \frac{1}{N} \sum_i C \!:\! \nabla^2 f(\rvx_i).
    \end{align*}
    The bound on $P$ follows from \Cref{lem:traceIneqs} and and $\sum_i C \!:\! \nabla^2 f(\rvx_i) \le NLT$. 
\end{proof}

We next compute $\gL J$.

\begin{lemma}\label{lem:GeneratorJ}
    The generator applied to $J = \Tr(C^{-1})$ is 
    \begin{equation}\label{eq:LJ}
        \gL J = 2 \Tr(KC^{-1}) + \left[-2 + \frac{2d+4}{N}\right] \Tr(C^{-1}).
    \end{equation}
\end{lemma}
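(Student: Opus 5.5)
The plan is a direct Itô/generator computation. I will apply \labelcref{eq:ALDIGeneratorDef} to $J=\Tr(C^{-1})$, computing first the gradients $\nabla_i J$ and then the contractions $C\!:\!\nabla_i^2 J$. Throughout, write $\rvy_i \coloneqq \rvx_i-\bar\rvx$, so that $\sum_i \rvy_i=0$ and $\sum_i \rvy_i\rvy_i^\top = NC$.

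\emph{First derivatives.} Perturb $\rvx_i\mapsto \rvx_i+h$. Because the deviations sum to zero, the shift of the mean does not contribute at first order, and
\[
dC=\tfrac1N(\rvy_i h^\top+h\rvy_i^\top).
\]
Since $d(C^{-1})=-C^{-1}\,dC\,C^{-1}$, this gives $\nabla_i J=-\tfrac2N C^{-2}\rvy_i$. Inserting this into the drift part of \labelcref{eq:ALDIGeneratorDef} and using the definition of $K$:
\begin{itemize}
\item the gradient term gives $\tfrac2N\sum_i \rvg_i^\top C^{-1}\rvy_i=2\Tr(KC^{-1})$;
\item the correction term gives $-\tfrac{2(d+1)}{N^2}\sum_i \rvy_i^\top C^{-2}\rvy_i=-\tfrac{2(d+1)}{N}J$, using $\sum_i\rvy_i\rvy_i^\top=NC$.
\end{itemize}

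\emph{Second derivatives.} The second variation of $C$ in direction $h$ is $d^2C=\tfrac2N(1-\tfrac1N)hh^\top$; here the mean shift now contributes the $-1/N$. The second variation of the inverse is
\[
d^2(C^{-1})=2C^{-1}dC\,C^{-1}dC\,C^{-1}-C^{-1}d^2C\,C^{-1}.
\]
To evaluate the contraction $C\!:\!\nabla_i^2 J$, I take the expectation of $\Tr d^2(C^{-1})$ over $h=C^{1/2}\xi$ with $\xi$ standard Gaussian, so that $\mathbb E[hh^\top]=C$.

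The term involving $d^2C$ gives $-\tfrac2N(1-\tfrac1N)J$ per particle, hence $-2(1-\tfrac1N)J$ after summing over $i$.

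For the quadratic term, expand $\tfrac{2}{N^2}\Tr\bigl(C^{-1}(\rvy_ih^\top+h\rvy_i^\top)C^{-1}(\rvy_ih^\top+h\rvy_i^\top)C^{-1}\bigr)$ into four pieces. Their expectations are:
\begin{itemize}
\item the two "aligned" pieces each give $\rvy_i^\top C^{-2}\rvy_i$;
\item one cross piece gives $(h^\top C^{-1}h)(\rvy_i^\top C^{-2}\rvy_i)$, with expectation $d\,\rvy_i^\top C^{-2}\rvy_i$;
\item the other cross piece gives $(\rvy_i^\top C^{-1}\rvy_i)(h^\top C^{-2}h)$, with expectation $(\rvy_i^\top C^{-1}\rvy_i)J$.
\end{itemize}
Summing over $i$ with $\sum_i\rvy_i^\top C^{-2}\rvy_i=NJ$ and $\sum_i\rvy_i^\top C^{-1}\rvy_i=Nd$ gives $\tfrac{2}{N}\bigl[(d+2)J+dJ\bigr]=\tfrac{4d+4}{N}J$.

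\emph{Assembly.} Adding the three contributions,
\[
2\Tr(KC^{-1})-\tfrac{2d+2}{N}J+\tfrac{4d+4}{N}J-2J+\tfrac2N J=2\Tr(KC^{-1})+\bigl(-2+\tfrac{2d+4}{N}\bigr)J,
\]
which is \labelcref{eq:LJ}.

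The main obstacle is bookkeeping in the second-order term. Two points need care: the $-1/N$ coming from the mean in $d^2C$ (it is present at second order but absent at first order), and the correct pairing of the four trace terms with the identities $\sum_i\rvy_i\rvy_i^\top=NC$ and $\mathbb E[hh^\top]=C$. Everything else is mechanical.
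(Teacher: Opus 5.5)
Your proposal is correct and is essentially the paper's direct generator computation. It produces the same gradient $\nabla_i J=-\frac{2}{N}C^{-2}(\rvx_i-\bar\rvx)$, the same drift contribution $2\Tr(KC^{-1})-\frac{2(d+1)}{N}J$, and the same diffusion contribution $\left(-2+\frac{4d+6}{N}\right)J$. The only difference is bookkeeping: you evaluate $C\!:\!\nabla_i^2 J$ as a Gaussian average of second directional derivatives of $C^{-1}$, so the factor $(1-\frac{1}{N})$ enters through $d^2C$, whereas the paper differentiates the gradient formula coordinatewise, so that factor enters through $\partial_{x_{j,p}}\boldsymbol{\delta}_j$.
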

The calculation is more involved and deferred to \Cref{appsec:GeneratorJ}. We can now compute the generator applied to the Lyapunov function $\gL W_a$. By chain rule, the generator applied to the exponential of a test function $V$ is 
    \begin{align*}
        \gL e^V = (\gL V)e^V + \Gamma(V,V) e^V.
    \end{align*}
    Let $\boldsymbol{\delta}_i = \rvx_i - \bar \rvx$. The generator applied to the Lyapunov function $W_a = e^{aE} J$ is
    \begin{align*}
        \gL W_a &= J \gL e^{aE} + e^{aE} \gL J + 2\Gamma(e^{aE}, J)\\
        &= J \left[a \gL E + a^2\frac{1}{N^2} \sum_i  \rvg_i^\top C \rvg_i\right]e^{aE} + e^{aE} \gL J + 2a \sum_i (\nabla_i J)^\top C \frac{1}{N} \rvg_i e^{aE}\\
        &= a W_a\gL E  + \frac{a^2}{N} D W_a + e^{aE} \gL J - \frac{4a}{N^2} \sum_i \boldsymbol{\delta}_i^\top C^{-1} \rvg_i e^{aE}\\
        &= a W_a \gL E  + \frac{a^2}{N} D W_a + e^{aE} \gL J - \frac{4a}{N} \Tr(KC^{-1}) e^{aE}. 
    \end{align*}
    Dividing by $W_a$ and substituting \labelcref{eq:LE,eq:LJ} yields
    \begin{align}
        \frac{\gL W_a}{W_a} = \left(-a + \frac{a^2}{N}\right)D+aP + \left(-2 + \frac{2d+4}{N}\right) + \left(2 - \frac{4a}{N}\right) \frac{\Tr(KC^{-1})}{J}. \label{eq:LWaoverWa}
    \end{align}

It now remains to bound these terms using the distant convexity assumption. For $N \ge d+3, a \in (0,N/2)$, define the following constants
    \begin{align}
        \lambda \coloneqq 2 - \frac{2d+4}{N} > 0,\quad c_a \coloneqq a\left(1 - \frac{a}{N}\right)>0. \label{eq:ConstantDefsLambdaca}
    \end{align}
    Equation \labelcref{eq:LWaoverWa} becomes
    \begin{align}
        \frac{\gL W_a}{W_a} &\le -\lambda - c_a D + aP + \left(2 - \frac{4a}{N}\right) \frac{\Tr(KC^{-1})}{J} .\label{eq:WaUnsimplifiedBound}
    \end{align}
    Using \labelcref{eq:PUpperBound}, \labelcref{eq:DLowerbound} and \Cref{lem:traceIneqs}, we obtain
    \begin{equation}
        \frac{\gL W_a}{W_a}\le -\lambda - c_a C_T T^2 + c_a B_T + a p_0 T + \left(2 - \frac{4a}{N}\right) L \sqrt{\frac{T}{J}}
    \end{equation}
    The middle three terms constitute a quadratic in $T$, with coefficients all of order $a$. Therefore, we can choose $a>0$ sufficiently small such that 
    \begin{align*}
        \sup_{T \in \R} \left\{-\frac{1}{2}c_a C_T T^2 + ap_0 T + c_a B_T\right\} = c_a B_T + \frac{a^2 p_0^2}{2c_a C_T} \le \frac{\lambda}{4}.
    \end{align*}
    Then for this choice of $a$, we have the bound
    \begin{equation*}
        \frac{\gL W_a}{W_a} \le -\frac{3\lambda}{4} - \frac{c_a C_T}{2} T^2+ (2 - \frac{4a}{N})L\sqrt{\frac{T}{J}}.
    \end{equation*}
    \begin{enumerate}
        \item Since $TJ \ge d^2$, we have $\sqrt{T/J} \le T/d$. Therefore, there exists a $T_0>0$ such that if $T \ge T_0$, then 
        \begin{equation*}
            \frac{\gL W_a}{W_a} \le -\frac{3\lambda}{4} - \frac{c_a C_T}{2}T^2 + (2 - \frac{4a}{N}) L \frac{T}{d} \le -\frac{\lambda}{2}.
        \end{equation*}
        \item For $T \le T_0$, choose a sufficiently large constant $J_0>0$ such that for all $J \ge J_0$,
        \begin{align*}
            (2 - \frac{4a}{N}) L \sqrt{\frac{T}{J}} \le \frac{\lambda}{4}
        \end{align*}
        which again implies that 
        \begin{equation*}
            \frac{\gL W_a}{W_a} \le -\frac{\lambda}{2}.
        \end{equation*}
        \item When $T \le T_0$ and $J \le J_0$, using \labelcref{eq:DLowerbound2}, \labelcref{eq:PUpperBound} and \Cref{lem:traceIneqs}, we obtain
        \begin{equation}
            \frac{\gL W_a}{W_a} \le -\lambda - c_a \left[\frac{c_2}{J_0} E - \frac{b_2}{J_0}\right] + ap_0 T + (2 - \frac{4a}{N})\frac{L}{d}T.
        \end{equation}
        Since $c_a, c_2, J_0>0$, we can choose $E_0>0$ large enough such that for $E \ge E_0$, that $\gL W_a/W_a \le -\lambda/2$. 
        
        \item The remaining set $\mathrm{K} \coloneqq \{T \le T_0, J \le J_0, E \le E_0\}$ is compact by \Cref{prop:KCompact}. 
    \end{enumerate}

    Since $W_a$ is continuous on $\mathrm{M}$, we have shown that 
    \begin{equation*}
        \gL W_a \le -\frac{\lambda}{2} W_a + b \mathbf{1}_\mathrm{K}
    \end{equation*}
    for some compact $\mathrm{K} \subset \mathrm{M}$.

On $M$, the diffusion is locally uniformly positive, and furthermore $M$ is path-connected for $N \ge d+2$ \cite[Lem. A.2]{garbuno2020affine}. Hence the (strong-Feller) diffusion is positive recurrent and Lebesgue-irreducible \cite[Thm. 3.1]{kliemann1987recurrence}. Since the Lyapunov function $W_a$ is norm-like and satisfies the drift condition $\gV \le -c \gV + b$ for some $c>0, b\in \R$, geometric ergodicity follows from \cite[Thm. 6.1]{meyn1993stability}.

\subsection{Proof of Theorem \ref{thm:timeScaleErgo}} \label{ssec:CtsTimeScaled}
With the proposed regularization, we have the simple bounds
\begin{equation}\label{eq:gammaBounds}
    \gamma \le 1,\quad \gamma T \le \theta^{-1}, \quad |\gamma'|T \le \gamma, \quad|\gamma'| T^2 \le \theta^{-1}.
\end{equation}
We first compute the carr\'e-du-champ between $T$ and $W_a$.
\begin{align*}
    \Gamma(T,W_a) &= \frac{2}{N}\sum_i (\rvx_i - \bar \rvx)^\top C \nabla_i (W_a)\\
    &= \frac{2}{N} \sum_i (\rvx_i - \bar \rvx)^\top C \left[\frac{a}{N}\rvg_i W_a + e^{aE} \nabla_i J\right]\\
    &= \frac{2a}{N^2} W_a\sum_i (\rvx_i - \bar \rvx)^\top C \rvg_i  + \frac{2}{N} \sum_i (\rvx_i - \bar \rvx)^\top C \left(-\frac{2}{N} C^{-2} (\rvx_i - \bar \rvx)\right) e^{aE}\\
    &= \frac{2a}{N} W_a \Tr(CK) - \frac{4d}{N} e^{aE}.
\end{align*}
Using \Cref{lem:generatorALDITimescale} and \labelcref{eq:LWaoverWa} gives 
\begin{align}
    \frac{\bar \gL W_a}{W_a} &= \gamma \left[\left(-a + \frac{a^2}{N}\right)D+aP + \left(-2 + \frac{2d+4}{N}\right) + \left(2 - \frac{4a}{N}\right) \frac{\Tr(KC^{-1})}{J}\right] \notag \\&\qquad + \gamma' \left[\frac{2a}{N} \Tr(CK) - \frac{4d}{NJ}\right] \label{eq:LBarWa}\\
    &\le \gamma \left \{-\lambda - c_a C_T T^2 + c_a B_T + a p_0 T + \left(2 - \frac{4a}{N}\right) L \sqrt{\frac{T}{J}} \right\}\notag\\
    &\qquad + |\gamma'| \left[\frac{2aL}{N} T^2 + \frac{4d}{NJ}\right]. \label{eq:LBarWaBound}
\end{align}
The additional terms can be absorbed into the quadratic in $T$ using $|\gamma'| T \le \gamma$ and $J^{-1} \le Td^{-2}$:
\begin{align*}
    \frac{\bar \gL W_a}{W_a} &\le -\gamma \lambda - c_a C_T \gamma T^2 + \left(a p_0 + \frac{2aL}{N}\right) \gamma T + c_a \gamma B_T + \left(2 - \frac{4a}{N}\right) L \gamma \sqrt{\frac{T}{J}} + \frac{4d}{NJ}|\gamma'|.
\end{align*}
Choose $a>0$ sufficiently small (independent of $\eta$) such that
\begin{align*}
    \sup_{T \ge 0}\left\{ -\frac{1}{2}c_a C_T T^2 + a\left(p_0 + \frac{2L}{N}\right) T + c_a B_T \right\}\le \lambda/4.
\end{align*}
Then we have the estimate
\begin{equation*}
    \frac{\bar \gL W_a}{W_a} \le -\frac{3 \lambda}{4}\gamma - \frac{1}{2} c_a C_T \gamma T^2 + \left(2 - \frac{4a}{N}\right) L\gamma \sqrt{\frac{T}{J}} + \frac{4d}{NJ}|\gamma'|.
\end{equation*}
\begin{enumerate}
    \item Since $\sqrt{T/J} \le T/d$, the second-to-last term is bounded as $T \rightarrow \infty$. Since $J^{-1} \le Td^{-2}$ and $|\gamma'|T \le \gamma \le 1$, the last term is also bounded. Choose $T_0$ sufficiently large such that for all $T \ge T_0$, $(\bar \gL W_a) / W_a \le -1$. Such $T_0$ exists since $\gamma T^2 \sim T$ and $c_a C_T >0$. 
    
    \item On $T \le T_0$, $\gamma$ is lower bounded by some $\gamma_0 = \gamma(T_0)$, and therefore $|\gamma'|$ is upper bounded by $\theta^{-1} \gamma_0^2$. Choose $J_0$ sufficiently large such that 
\begin{equation*}
    \left(2 - \frac{4a}{N}\right)L \sqrt{\frac{T_0}{J_0}} + \frac{4d}{NJ_0} \theta \gamma_0^2\le \frac{\lambda}{4} \gamma_0.
\end{equation*}
Then on $\{T \le T_0, J \ge J_0\}$, using $\gamma \le 1$,  we have the inequalities
\begin{align*}
    \frac{\bar \gL W_a}{W_a} \le -\frac{3\lambda }{4} \gamma_0 + \left(2 - \frac{4a}{N}\right) L\sqrt{\frac{T_0}{J_0}} +\frac{4d}{NJ_0}\theta\gamma_0^2\le -\frac{\lambda}{2} \gamma_0.
\end{align*}

\item The remaining region is $\{T \le T_0, J \le J_0\}$. We still have that $\gamma \ge \gamma_0$ on this region. Recalling the bound from \labelcref{eq:DLowerbound2}
\begin{equation*}
    D \ge \frac{c_2}{J_0} E - \frac{b_2}{J_0}\qquad \text{in }  \{J \le J_0\},
\end{equation*}
combining into \labelcref{eq:LBarWa} yields the upper bound
\begin{equation*}
    \frac{\bar \gL W_a}{W_a} \le -\gamma_0 c_a\frac{c_2}{J_0}E + c
\end{equation*}
for some constant $c$, using $J^{-1} \le Td^{-2}$ to bound negative powers of $J$ with the upper bound on $T$. Finally choose $E_0$ sufficiently large such that whenever $E \ge E_0,T \le T_0, J \le J_0$, we have $\bar \gL W_a / W_a \le -1$.

\item The remaining region $\mathrm{K} = \{T \le T_0, J \le J_0, E \le E_0\}$ is compact as before. 
\end{enumerate}
Since $W_a$ is continuous, both $\bar \gL W_a/W_a$ and $W_a$ are upper bounded on $\mathrm{K}$, therefore the Foster--Lyapunov condition holds:
\begin{equation}
    \bar \gL W_a \le -\min(1, \lambda \gamma_0/2) W_a + b \mathbf{1}_\mathrm{K}.
\end{equation}
The concluding argument from the previous theorem yields geometric ergodicity.

\subsection{Proof of Theorem \ref{thm:discreteTimeScaleErgo}}\label{ssec:discreteTimeErgo}

Suppose $\rmX \in \mathrm{M}$. The discrete time update \labelcref{eq:discreteTimeScaleDef} can be written in matrix form as
\begin{equation*}
    \rmX_+ = \rmX + \eta \left[- \gamma \begin{pmatrix}
        \rvg_1^\top \\ ... \\ \rvg_N^\top
    \end{pmatrix}C + P_c \rmX B\right] + \sqrt{2 \eta \gamma} \Xi C^{1/2}
\end{equation*}
where $C^{1/2}$ is the symmetric square root, and $P_c = I_N - N^{-1} \mathbf{1}_N\mathbf{1}_N^\top $ is the projection onto $\langle \mathbf{1}_N\rangle^\perp$, and $\Xi$ is an $N\times d$ matrix with i.i.d. standard Gaussian entries. Now consider the centered variables
\begin{equation*}
    \rmY = P_c \rmX \in \R^{N \times d},\quad \rmZ = \rmY C^{-1/2}.
\end{equation*}
Since $\rmY^\top \rmY = NC$, we have $\rmZ^\top \rmZ = NI_d$. The centered update scaled by the old covariance satisfies
\begin{align}\label{eq:ZPlus}
     \rmY_+ C^{-1/2} &= \rmZ + \eta \Delta_\rmX + \sqrt{2 \eta \gamma} P_c \Xi,\\
     \Delta_\rmX &\coloneqq -\gamma \tG C^{1/2} + P_c \rmX B C^{-1/2}
\end{align}
where $\Delta_\rmX$ is the deterministic component of the update, and $\tG \coloneqq P_c (\rvg_1\ ...\ \rvg_N)^\top \in \R^{N \times d}$ is the centered gradient matrix. Since $B$ be defined using powers of $C$, it commutes with $C^{-1/2}$, and we have the equivalent expression
\begin{equation}\label{eq:DeltaX}
    \Delta_\rmX = -\gamma \tG C^{1/2} + \rmZ B.
\end{equation}
We first show that the deterministic part of the scaled update in $Y$ is uniformly bounded.
\begin{lemma}
    The deterministic update $\Delta_\rmX$ is uniformly bounded over all $\rmX \in \mathrm{M}$, by some constant depending only on $N,d, \theta, L$. Furthermore,
    \begin{equation}\label{eqs:timescaleIneqs}
        \|\gamma C\| \le \theta^{-1},\quad \sum_j \|-\gamma C \rvg_j + B \rvy_j\|^2 \lesssim_{\theta, L,N,d} 1 + N \gamma D.
    \end{equation}
\end{lemma}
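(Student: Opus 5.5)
The plan is to bound each of the two pieces of $\Delta_\rmX = -\gamma \tG C^{1/2} + \rmZ B$ separately, in Frobenius norm, using only $\rmZ^\top\rmZ = N I_d$, the Lipschitz bound $\|\nabla^2 f\|\le L$, and the elementary inequalities \labelcref{eq:gammaBounds}.

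\textbf{The term $\rmZ B$.} Since $\|\rmZ\|_F^2 = \Tr(\rmZ^\top\rmZ) = Nd$, it suffices to bound $\|B\|$. From \labelcref{eq:BDef,eq:timeScaleDef} we have $B = N^{-1}[(d+1)\gamma I_d - 2\theta\gamma^2 C]$. Using $\gamma\le 1$ and $\theta\gamma^2\|C\| \le \theta\gamma^2 T \le \gamma \le 1$, this gives $\|B\| \le (d+3)/N$. Hence $\|\rmZ B\|_F \le \sqrt{Nd}\,(d+3)/N$.

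\textbf{The term $\gamma\tG C^{1/2}$ (the main step).} The worry is that $\tG$ grows with $\rmX$. The saving fact is that only the \emph{centered} gradients enter. Since $\nabla f$ is $L$-Lipschitz,
\begin{equation*}
\|\tG\|_F^2 = \sum_i \|\rvg_i - \bar\rvg\|^2 = \frac{1}{2N}\sum_{i,j}\|\rvg_i-\rvg_j\|^2 \le \frac{L^2}{2N}\sum_{i,j}\|\rvx_i-\rvx_j\|^2 = L^2 N T.
\end{equation*}
Then
\begin{equation*}
\|\gamma\tG C^{1/2}\|_F \le \gamma\|\tG\|_F\|C^{1/2}\| \le \gamma L\sqrt{N}\,T \le L\sqrt{N}/\theta,
\end{equation*}
using $\|C^{1/2}\|\le\sqrt{T}$ and $\gamma T\le\theta^{-1}$. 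This is exactly where the regularization $\gamma$ is needed: without it the bound would be cubic in the ensemble. Together with the previous paragraph this gives the uniform bound on $\Delta_\rmX$, depending only on $N,d,\theta,L$.

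\textbf{The inequalities \labelcref{eqs:timescaleIneqs}.} The first is immediate: $\|\gamma C\|\le \gamma T\le \theta^{-1}$. For the second, I would write
\begin{equation*}
\sum_j\|-\gamma C\rvg_j + B\rvy_j\|^2 \le 2\sum_j \gamma^2 \rvg_j^\top C^2 \rvg_j + 2\sum_j\|B\rvy_j\|^2.
\end{equation*}
For the first sum, $\gamma^2\rvg_j^\top C^2\rvg_j \le \gamma\|\gamma C\|\,\rvg_j^\top C\rvg_j \le \theta^{-1}\gamma\,\rvg_j^\top C\rvg_j$, so summing over $j$ gives $\theta^{-1}N\gamma D$.

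For the second sum, the bound $\|B\|\le(d+3)/N$ alone gives $\sum_j\|B\rvy_j\|^2 \lesssim T$, which is not uniformly bounded. I would instead use the sharper bound $\|B\| \lesssim \gamma$, which holds because $\theta\gamma^2\|C\|\le\gamma$. This gives
\begin{equation*}
\sum_j\|B\rvy_j\|^2 \lesssim \gamma^2 NT \le N\gamma/\theta \lesssim 1.
\end{equation*}
Combining the two sums yields $\lesssim 1 + N\gamma D$, as claimed.

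I expect the only delicate point to be keeping the factors of $\gamma$ in the right places, so that each power of $T$ is paired with a factor of $\gamma$. Beyond that, everything follows directly from \labelcref{eq:gammaBounds} and the Lipschitz bound on $\nabla f$.
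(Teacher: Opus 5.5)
Your proof is correct and follows the paper's argument essentially step for step. You split $\Delta_\rmX$ into $-\gamma\tG C^{1/2}$ and $\rmZ B$, bound $\|\tG\|_F^2\le NL^2T$ via Lipschitzness of $\nabla f$, and use $\|B\|\le (d+3)\gamma/N$ together with $\gamma T\le\theta^{-1}$; you then use $\gamma^2\rvg_j^\top C^2\rvg_j\le\theta^{-1}\gamma\,\rvg_j^\top C\rvg_j$ and $\sum_j\|B\rvy_j\|^2\le\|B\|^2NT$ for the second estimate. The only cosmetic difference is that you control the centered gradients through the pairwise-difference identity, whereas the paper compares to $\nabla f(\bar\rvx)$ using that the mean minimizes squared deviations; both give the same bound.
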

\begin{proof}
    First note that
    \begin{align*}
        \Tr(\tG^\top \tG) = \sum_i \|\rvg_i - \bar \rvg\|^2 \le \sum_i \|\rvg_i - \nabla f (\bar \rvx)\|^2 \le NL^2 \Tr(C).
    \end{align*}
    Therefore, the first component can be bounded as
    \begin{align}
        \|\tG C^{1/2}\|_F^2 = \Tr(C \tG^\top \tG) \le \Tr(C) \Tr(\tG^\top \tG) \le NL^2T^2. \label{eq:GCnHalfBound}
    \end{align}
    The second component $\rmZ B_n$ can be bounded as follows:
    \begin{align*}
        \|\rmZ B_n\| &\le \|\rmZ\| \|B_n\|\\
        &\le \sqrt{N} \left[\frac{d+1}{N}\gamma + \frac{2}{N} |\gamma'| \Tr(C)\right]\\
        &\le \frac{d+3}{\sqrt{N}}\gamma
    \end{align*}
    where we use $|\gamma'|T \le \gamma \le 1$. Combining yields
    \begin{equation*}
        \|\Delta_\rmX\| \le \gamma \sqrt{N} LT + \frac{d+3}{\sqrt{N}}\gamma \le \sqrt{N}L\theta^{-1} + \frac{d+3}{\sqrt{N}}.
    \end{equation*}

    The first inequality in \labelcref{eqs:timescaleIneqs} follows directly from $\|C\| \le T$. For the second, we have that
    \begin{align*}
        \sum_j \|\gamma C \rvg_j\|^2 &\le \gamma^2 \sum_j \rvg_j^\top C^\top C \rvg_j \le \gamma^2 \|C\| \sum_j \rvg_j^\top C \rvg_j\\
        &\le N\gamma^2 TD \le N \theta^{-1} \gamma D,
    \end{align*}
    and also
    \begin{align*}
        \sum_j \|B \rvy_j\|^2 = N \Tr(B C B^\top)\le N \|B\|^2 T \le N \left(\frac{d+3}{N}\gamma\right)^2 T \le \frac{(d+3)^2}{N} \theta^{-1} \gamma.
    \end{align*}
    The desired inequality follows from the inequality $\|u+v\|^2\le2(\|u\|^2+\|v\|^2)$.
\end{proof}

The previous lemma shows that the squared drift of the process grows at most as $1+\gamma D$, i.e. with norm growing at most linearly in $T$. This yields a desired growth condition.

The following lemma, proved in \Cref{app:uniformBoundUpdates}, justifies a uniform Taylor expansion of the singular map $H\mapsto H^{-1}$ after taking expectations.

\begin{lemma}\label{lem:uniformBoundUpdates}
    Let $n = N-1 \ge d+2$, and consider the compact subset
    \begin{equation*}
        \gZ = \left\{Z \in \R^{N \times d} \mid \mathbf{1}_N^\top Z = 0,\, Z^\top Z = N I_d\right\}.
    \end{equation*}
    Let $M < \infty$, and define $\gU \coloneqq \{U \in \R^{N \times d} \mid P_c U = U ,\, \|U\|_F \le M\} $ as the set of bounded centered matrices. Suppose $U \in \gU$ and $\gamma \in [0,1]$. Further suppose that $R$ is a symmetric matrix satisfying $\gamma I_d \preceq R \preceq 2\gamma I_d$, and let $\Xi \in \R^{N \times d}$ have i.i.d.\ standard Gaussian entries. Define the random matrices
    \begin{equation}
        \Lambda_\eta = Z + \eta U + \sqrt{2\eta} P_c \Xi R^{1/2},\quad H_\eta \coloneqq \frac{1}{N} \Lambda_\eta^\top \Lambda_\eta .
    \end{equation}
    Then the following first order expansion holds, where the remainder is uniform over $Z \in \gZ$, $U \in \gU$, $\gamma \in [0,1]$, and $R$ satisfying $\gamma I_d\preceq R \preceq 2\gamma I_d$ as $\eta \rightarrow 0$,
    \begin{gather}
        \mathbb{E} H_\eta^{-1} = I_d + \eta B_{-1} + E_\eta,\quad \|E_\eta\| = \mathcal{O}(\eta^{3/2}),\\
        B_{-1} \coloneqq -\frac{1}{N}(Z^\top U + U^\top Z) + \frac{2}{N}\left[(d+3-N) R + (\Tr R) I_d\right].
    \end{gather}
\end{lemma}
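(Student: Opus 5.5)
The plan is a second-order Neumann-type expansion of $H_\eta^{-1}$ around $I_d$. The leading terms are computed exactly by Gaussian moment identities, and the remainder is controlled by splitting into a good event, where the perturbation is small, and a bad event, which has exponentially small probability. On the bad event we fall back on inverse moments of a (noncentral) Wishart-type matrix.

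\textbf{Step 1: expand $H_\eta$.} Write $W \coloneqq P_c \Xi$. Since $P_c Z = Z$ and $P_c U = U$, we have $Z^\top W = Z^\top \Xi$, and $Z^\top Z = N I_d$ gives
\begin{equation*}
H_\eta = I_d + A, \quad A = \tfrac{1}{N}\Big[\eta(Z^\top U + U^\top Z) + \sqrt{2\eta}\,(Z^\top W R^{1/2} + R^{1/2} W^\top Z) + 2\eta R^{1/2}W^\top W R^{1/2} + \eta^{3/2}(\cdots) + \eta^2 U^\top U\Big].
\end{equation*}
Here $(\cdots)$ denotes terms that are linear in $W$ with coefficient involving $U$. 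We then use the exact identity
\begin{equation*}
H_\eta^{-1} = I_d - A + A^2 - A^3 H_\eta^{-1}
\end{equation*}
and take expectations term by term.

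\textbf{Step 2: the leading terms.} These use $\mathbb{E}[W^\top W] = (N-1) I_d$ and the isotropy of the Gaussian part on $\langle \mathbf{1}_N\rangle^\perp$.
\begin{itemize}
\item The first-order term is
\begin{equation*}
\mathbb{E}A = \tfrac{\eta}{N}(Z^\top U + U^\top Z) + \tfrac{2\eta(N-1)}{N}R + \mathcal{O}(\eta^2).
\end{equation*}
\item In $\mathbb{E}A^2$, the only order-$\eta$ contribution is the square of the linear Gaussian part. Set $G = Z^\top W R^{1/2}$. Its rows are Gaussian because $Z^\top P_c Z = N I$, and a Wick computation gives
\begin{equation*}
\tfrac{2\eta}{N^2}\,\mathbb{E}(G+G^\top)^2 = \tfrac{2\eta}{N}\big[(d+2)R + (\Tr R) I_d\big],
\end{equation*}
up to the exact constant, which I would verify.
\item The terms of order $\eta^{3/2}$ in $\mathbb{E}A^2$ are odd in $W$: linear$\times$deterministic, or linear$\times$quadratic in the noise. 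Their expectation therefore vanishes, and the rest is $\mathcal{O}(\eta^2)$ uniformly, since $\|U\|_F\le M$, $Z\in\gZ$ is bounded and $\|R\|\le 2$.
\end{itemize}
Adding these, $-\mathbb{E}A + \mathbb{E}A^2 = \eta B_{-1} + \mathcal{O}(\eta^{3/2})$. This uses $-\tfrac{2(N-1)}{N}R + \tfrac{2}{N}[(d+2)R + \Tr R\, I] = \tfrac{2}{N}[(d+3-N)R + \Tr R\, I]$.

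\textbf{Step 3: the remainder $\mathbb{E}[A^3 H_\eta^{-1}]$ (the main obstacle).} The difficulty is that $H_\eta^{-1}$ is unbounded near singular $\Lambda_\eta$, and we need uniformity in $\gamma\in[0,1]$, including $\gamma$ near $0$ where the noise degenerates. Let $\Omega = \{\|A\|\le 1/2\}$.
\begin{itemize}
\item On $\Omega$ we have $\|H_\eta^{-1}\|\le 2$, so the contribution is at most $2\,\mathbb{E}\|A\|^3 = \mathcal{O}(\eta^{3/2})$ by Gaussian moments.
\item On $\Omega^c$, use H\"older with some $p\in(1,3/2)$:
\begin{equation*}
\mathbb{E}\big[\|A\|^3\|H_\eta^{-1}\|\mathbf{1}_{\Omega^c}\big] \le (\mathbb{E}\|A\|^{3r})^{1/r}\,(\mathbb{E}\|H_\eta^{-1}\|^{p})^{1/p}\,\mathbb{P}(\Omega^c)^{1-1/p-1/r}.
\end{equation*}
\item For the inverse moment, first use $R\succeq\gamma I$. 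Conditioning on the mean and using an Anderson-type domination (a noncentral Wishart dominates the central one in the smallest eigenvalue), I expect
\begin{equation*}
\mathbb{E}\|H_\eta^{-1}\|^p \lesssim (\eta\gamma)^{-p}\,\mathbb{E}\,\lambda_{\min}(\Xi'^\top\Xi')^{-p},
\end{equation*}
with $\Xi'$ an $n\times d$ standard Gaussian, $n=N-1$. This is finite exactly when $p<(n-d+1)/2$, which is where $n\ge d+2$ enters.
\item For the probability, the random part of $A$ has scale $\sqrt{\eta\gamma}$ and the deterministic part is $\mathcal{O}(\eta)$. Gaussian concentration then gives $\mathbb{P}(\Omega^c)\le \exp(-c/(\eta\gamma))$ for small $\eta$, and $\mathbb{P}(\Omega^c) = 0$ when $\gamma$ is tiny relative to the noise threshold. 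Since $x^{p}e^{-c'/x}$ is bounded, the factor $(\eta\gamma)^{-1}$ is beaten uniformly in $\gamma$, making this piece $\mathcal{O}(\eta^{k})$ for any $k$.
\end{itemize}
Both pieces are uniform over $Z\in\gZ$, $U\in\gU$, $\gamma\in[0,1]$ and admissible $R$, which yields $\|E_\eta\| = \mathcal{O}(\eta^{3/2})$.
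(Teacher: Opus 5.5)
Your proposal is correct and reaches the same $B_{-1}$ as the paper. Your Wick term $\frac{2\eta}{N}[(d+2)R+(\Tr R)I_d]$ is right, and so is its combination with $-\frac{2\eta(N-1)}{N}R$. The difference is how you control the singularity of $H_\eta^{-1}$.

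The paper proves an inverse-moment bound $\mathbb{E}\|H_\eta^{-1}\|^q\le C$ that is uniform in $\eta$ and $\gamma$. It uses that $Z+\eta U$ has singular values at least $\sqrt N/2$, reduces $\sigma_{\min}(\Lambda_\eta)$ to column-to-span distances, and splits on the event $F_k$ that the other columns' noise is small. On $F_k$ the projected mean has norm at least $\sqrt N/4$, and $\sup_{\sigma>0}\sigma^{-m}e^{-\alpha/\sigma^2}<\infty$ then gives $\mathbb{P}(\sigma_{\min}(\Lambda_\eta)\le s)\lesssim s^{N-d}$ with no dependence on $\eta\gamma$.

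You instead discard the mean, accept the non-uniform bound $\mathbb{E}\|H_\eta^{-1}\|^p\lesssim(\eta\gamma)^{-p}$, and let the tail $\mathbb{P}(\Omega^c)\le e^{-c/(\eta\gamma)}$ absorb it via $x^{-1}e^{-c/x}\le C_k x^k$ with $x=\eta\gamma\le\eta$. You also use the exact identity $H_\eta^{-1}=I-A+A^2-A^3H_\eta^{-1}$, which spares you the paper's separate estimate of $\mathbb{E}[(I-\Delta_\eta+\Delta_\eta^2)\mathbf{1}_{\{\|\Delta_\eta\|\ge 1/2\}}]$. Your argument is therefore shorter and more elementary. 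The paper's route yields a stronger, reusable fact: inverse moments of order $q<(N-d)/2$ are bounded uniformly, with no reliance on an exponential tail beating a polynomial singularity. In both proofs $n\ge d+2$ enters the same way, since H\"older needs an inverse moment of order strictly above one.

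Two small points about your Step 3:

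The stochastic domination of $\lambda_{\min}$ for a noncentral versus central Wishart matrix is not a direct consequence of Anderson's theorem, because $\{\sigma_{\min}\le s\}$ is not convex for $d\ge 2$. You do not need it, though. After rotating so that $R$ is diagonal, $\sigma_{\min}(\Lambda_\eta)\ge d^{-1/2}\min_k \mathrm{dist}(\lambda_k,\mathrm{span}\{\lambda_l : l\neq k\})$. Conditionally on the other columns, this distance is the norm of an $(N-d)$-dimensional Gaussian with covariance $2\eta r_k I$, $r_k\ge\gamma$, whose density is at most $(4\pi\eta r_k)^{-(N-d)/2}$ whatever its mean. That gives $\mathbb{P}(\sigma_{\min}(\Lambda_\eta)\le s)\lesssim (s/\sqrt{\eta\gamma})^{N-d}$, hence your $(\eta\gamma)^{-p}$ bound for $2p<N-d$. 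This is the paper's $F_k^c$ estimate used globally.

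Also, $\mathbb{P}(\Omega^c)>0$ for every $\gamma>0$. Only $\gamma=0$ is trivial, since there $R=0$ and $\Lambda_\eta$ is deterministic; the bound $x^{-1}e^{-c/x}\le C_k x^k$ is what handles small positive $\gamma$.
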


We can now begin to prove the Foster--Lyapunov condition. Let $Q_\eta$ denote the Markov kernel corresponding to the discrete time update. We first show that it satisfies an upper bound of the form
\begin{equation*}
    \frac{Q_\eta W_a}{W_a} \le \exp(\mathcal{O}(\eta)) (1 + \mathcal{O}(\eta) + \mathcal{O}(\eta^{3/2}))
\end{equation*}
where the $\mathcal{O}(\eta)$ terms are negative outside a compact set for sufficiently small $\eta$, and all remainders are uniform over all $\rmX \in \mathrm{M}$.

\subsubsection{Notation}
Let $F(\rmX) = \sum_j f(\rvx_j) = NE$, so that the update can be written
\begin{equation}\label{eq:XPlus}
    \rmX_+ = \rmX + \eta \left[- \gamma  \nabla F C + P_c \rmX B\right] +  \Xi\sqrt{2 \gamma \eta C}.
\end{equation}
Decompose the increment $\delta \rmX \coloneqq \rmX_+ - \rmX$ as the sum of a deterministic and a random component
\begin{align*}
    \delta \rmX &= \eta \rmH + \Xi\sqrt{2 \gamma \eta C} ,\\
    \rmH &\coloneqq -\gamma \nabla F C + P_c \rmX B.
\end{align*}
Recall from \labelcref{eqs:timescaleIneqs} that $\|\rmH\|_F^2 \lesssim_{\theta, L,N,d} 1+\gamma D$. Moreover, the relationship with the deterministic component $\Delta_\rmX$ is 
\begin{equation}
    \Delta_\rmX = P_c \rmH C^{-1/2}.
\end{equation}

\subsubsection{Constructing the inequality}
By definition, the Markov kernel applied to $W_a$ is 
\begin{align*}
    \frac{Q_\eta W_a}{W_a}(\rmX) = \mathbb{E} \left[e^{a(E_+ - E)} \frac{J_+}{J} \right].
\end{align*}
Since $\nabla^2 f \preceq L I_d$, we have the following quadratic upper bound on $F$:
\begin{equation*}
    F(\rmX + \delta \rmX) \le F(\rmX) + \nabla F(\rmX) \!:\!\delta \rmX + \frac{L}{2} \|\delta \rmX\|_F^2.
\end{equation*}
We will bound the expectation by constructing another probability distribution where $\Xi$ is no longer a standard Gaussian matrix. Define a tilting term given by the upper bound, and its normalizing constant
\begin{align}
    \Psi \coloneqq \exp( \frac{a}{N} \left[\nabla F(\rmX) \!:\! \delta \rmX + \frac{L}{2} \|\delta \rmX\|_F^2\right]),\quad 
    \hat Z \coloneqq \mathbb{E} \Psi.
\end{align}
We verify below that $\hat Z$ is finite. Let $\mathbb{P}$ denote the distribution of the update $\rmX_+$ or equivalently of $\Xi$. We define a new probability density for $\rmX_+$ (also on $\Xi$) by
\begin{align}
    \frac{\dd{\hat{\mathbb{P}}}}{\dd{\mathbb{P}}} = \frac{\Psi}{ \hat{Z}}.
\end{align}
The change of measure gives
\begin{align}
    \frac{Q_\eta W_a}{W_a} \le \hat Z \hat{\mathbb{E}} \left[\frac{J_+}{J} \right].
\end{align}
\subsubsection{Tilting: Normalizing constant}
To compute $\hat Z$, we use the following identity for the expectation of quadratic forms under Gaussians.
\begin{lemma}\label{lem:gaussianClosedForms}
    For a standard Gaussian vector $\xi$, vector $\ell$ and symmetric matrix $S$ satisfying $I-2S \succ 0$, 
    \begin{equation*}
        \mathbb{E} \left[e^{\ell \cdot \xi + \xi^\top S \xi}\right] = \det(I - 2S)^{-1/2} \exp(\frac{1}{2} \ell^\top (I - 2S)^{-1} \ell).
    \end{equation*}
\end{lemma}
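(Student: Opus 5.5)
The plan is to complete the square in the exponent. Write $M \coloneqq I - 2S \succ 0$, and first check that the integral converges. Under the hypothesis $I - 2S \succ 0$, the Gaussian weight combined with the quadratic term gives the integrand
\begin{equation*}
(2\pi)^{-n/2}\exp\Bigl(-\tfrac12 \xi^\top M \xi + \ell\cdot\xi\Bigr),
\end{equation*}
where $n$ is the dimension of $\xi$. This integrand decays like a nondegenerate Gaussian, so the expectation is finite and we may manipulate it freely.

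Next I complete the square. Setting $m \coloneqq M^{-1}\ell$, we have
\begin{equation*}
-\tfrac12 \xi^\top M\xi + \ell\cdot \xi = -\tfrac12 (\xi - m)^\top M(\xi - m) + \tfrac12 \ell^\top M^{-1}\ell.
\end{equation*}
This is checked by expanding the right-hand side and using that $M$ is symmetric. The constant $\exp(\tfrac12 \ell^\top M^{-1}\ell)$ then factors out of the integral.

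The remaining integral is handled by translation invariance of Lebesgue measure together with the standard Gaussian normalization:
\begin{equation*}
\int_{\R^n} e^{-\frac12 (\xi-m)^\top M(\xi-m)}\,\dd{\xi} = (2\pi)^{n/2}\det(M)^{-1/2}.
\end{equation*}
To verify the normalization, one can diagonalize $M = O^\top \Lambda O$ with $O$ orthogonal and change variables to reduce to a product of one-dimensional Gaussian integrals. Each such integral equals $\sqrt{2\pi/\lambda_k}$, and their product gives the factor $\det(M)^{-1/2}$.

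Multiplying the three factors together yields the claimed formula. There is no real obstacle here. The only point needing care is the integrability condition, which is exactly the hypothesis $I - 2S \succ 0$. In the subsequent application one must check this condition for the tilting $\Psi$, namely $S = \frac{aL\eta\gamma}{N} C$ (in suitable vectorized form) with small $\eta$, and it holds because $\|\gamma C\| \le \theta^{-1}$.
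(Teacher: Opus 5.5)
Your completing-the-square argument is correct and complete: rewriting the integrand as $(2\pi)^{-n/2}\exp(-\tfrac12\xi^\top(I-2S)\xi+\ell\cdot\xi)$, shifting by $m=(I-2S)^{-1}\ell$, and using the Gaussian normalization $(2\pi)^{n/2}\det(I-2S)^{-1/2}$ gives exactly the stated formula. The paper gives no proof of this lemma and simply invokes it as a standard identity when computing $\hat Z$; your argument is the standard one, and your remark on the application ($S=\frac{aL\eta}{N}(I_N\otimes\gamma C)$ with $\|\gamma C\|\le\theta^{-1}$) matches how the paper checks that $I-2S\succ 0$.
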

Directly applying this with the definition of $\hat Z = \mathbb{E}\Psi$, where $\Xi$ is a matrix with standard Gaussian entries,
\begin{align*}
    \hat Z &= \mathbb{E}\exp\left[\frac{a}{N} \left(\nabla F \!:\! (\eta \rmH + \Xi\sqrt{2 \eta \gamma C} ) + \frac{L}{2} (\eta \rmH +\Xi \sqrt{2 \eta \gamma C} ) \!:\!(\eta \rmH + \Xi\sqrt{2 \eta \gamma C} )\right)\right]\\
    &= \mathbb{E} \exp\left[\frac{a}{N} \left(\eta\nabla F \!:\! \rmH+\frac{L}{2} \eta^2 \rmH\!:\!\rmH + (\nabla F + L \eta \rmH) \!:\! (\Xi\sqrt{2 \eta \gamma C} ) + \frac{L}{2} \Tr( 2\eta \gamma \Xi C \Xi^\top)\right)\right].
\end{align*}
Applying \Cref{lem:gaussianClosedForms} with $\ell = \frac{a}{N} (\nabla F + L \eta \rmH) \sqrt{2 \eta \gamma C}$ and $S = \frac{a}{N} L \eta (I_N \otimes \gamma C)$, we obtain
\begin{align}
    \log \hat{Z} &= -\frac{1}{2} \log\det\left(I - 2S\right) + \frac{1}{2} \ell^\top (I-2S)^{-1} \ell + \frac{a}{N} \eta \nabla F \!:\! \rmH + \frac{aL}{2N} \eta^2 \|\rmH\|_F^2. \label{eq:logZHat}
\end{align}

Since $\|\gamma C\| \le \theta^{-1}$, we immediately obtain that $I_{Nd} - 2S \succ 0$ if $\frac{a}{N} L\eta \theta^{-1} < \frac{1}{2}$. The normalizing constant $\hat Z$ is therefore finite for all sufficiently small $\eta$, depending on $a>0$. 
\begin{condition}\label{cond:boundedS}
    We assume that $\eta$ is sufficiently small so that $\frac{a}{N} L\eta \theta^{-1} < \frac{1}{4}$, thus $\frac{1}{2} I\preceq(I-2S)\preceq I$ and all Taylor expansions have uniformly bounded remainder.
\end{condition}
The following arguments will assume that \Cref{cond:boundedS} holds. It remains to expand $\log \hat Z$ in terms of powers of $\eta$. We identify the first order term, and show that the second order term is uniformly bounded by $\mathcal{O}(\eta^2 (1+\gamma D))$.

\textbf{Log-determinant term.} The Taylor expansion of $\log \det$ about $I$ is 
\begin{align*}
\log\det(I-A) = - \Tr A + \mathcal{O}(\|A\|^2).
\end{align*}
Since $S = \mathcal{O}(\eta)$ uniformly, applying the expression with $A=2S$ gives
\begin{align}
    -\frac{1}{2} \log\det (I-2S) &= \Tr S + \mathcal{O}(\eta^2) = aL \eta\gamma T + \mathcal{O}(\eta^2).
\end{align}
\textbf{Quadratic term.} Since $S = \mathcal{O}(\eta)$, we obtain $(I-2S)^{-1} = I+\mathcal{O}(\eta)$. Therefore
\begin{align*}
    \frac{1}{2} \ell^\top (I-2S)^{-1} \ell &= \frac{a^2}{2N^2} (\nabla F + L\eta \rmH)^\top (2 \eta \gamma C)(I - 2\frac{aL}{N} \eta \gamma C)^{-1} (\nabla F + L \eta \rmH)\\
    &= \frac{a^2}{N^2} \eta (\nabla F + L \eta \rmH)^\top (\gamma C + \mathcal{O}(\eta) \gamma C) (\nabla F + L \eta \rmH).
\end{align*}
By definition of $D$, we have that $\nabla F^\top C\nabla F  = \sum_j \rvg_j^\top C \rvg_j = ND$. The higher order cross terms are all bounded similarly using \labelcref{eqs:timescaleIneqs}:
\begin{gather*}
    \|\rmH^\top(\gamma C) \nabla F\|\lesssim \|\rmH\| \|\gamma C \nabla F\| \lesssim \sqrt{1+\gamma D} \sqrt{\gamma D},\\
    \|\rmH^\top (\gamma C) \rmH\| \le \|\rmH\|^2 \|\gamma C\| \lesssim 1+\gamma D.
\end{gather*}
Combining yields
\begin{equation*}
    \frac{1}{2}\ell^\top (I-2S)^{-1} \ell = \frac{a^2}{N}\eta \gamma D + \mathcal{O}(\eta^2(1+\gamma D)).
\end{equation*}

\textbf{Drift terms.} A direct computation gives
\begin{align*}
    \frac{a}{N} \nabla F \!:\!\rmH &= -a \gamma D + \frac{a}{N} \sum_j \rvg_j^\top \left(\frac{d+1}{N} \gamma I_d + \frac{2 \gamma' C}{N}\right)(\rvx_j - \bar \rvx)\\
    &=-a\gamma D + \frac{a}{N}(d+1) \gamma \Tr( K) + \frac{2a \gamma'}{N} \Tr(KC).
\end{align*}
The last term satisfies $\frac{aL}{2N}\eta^2 \|\rmH\|_F^2 \lesssim \eta^2(1+\gamma D)$ by \labelcref{eqs:timescaleIneqs}.

Substituting these estimates into \labelcref{eq:logZHat} yields the first order expansion with uniform remainder (for sufficiently small $\eta$ depending on $a>0$)
\begin{align}
    \log \hat Z &= \left(-a + \frac{a^2}{N}\right)\eta \gamma D +\frac{a(d+1)}{N} \eta\gamma \Tr(K) + \frac{2a \gamma'}{N} \eta\Tr(KC) + aL\eta \gamma T + \mathcal{O}(\eta^2(1+\gamma D)) \notag \\
    &\le-c_a \eta \gamma D + a \left(1+\frac{d+1}{N}\right) \eta \gamma LT + \frac{2a|\gamma'|}{N}\eta LT^2 + \mathcal{O}(\eta^2(1+\gamma D)). \label{eq:logZBound}
\end{align}
Here, as before, $c_a = a - \frac{a^2}{N}$ is positive for sufficiently small $a>0$, and we use the inequalities $|\Tr(K)| \le LT,\, |\Tr(KC)| \le LT^2$.

\subsubsection{Tilting: expectation}
We now expand $J_+ = \Tr(C_+^{-1})$ using \Cref{lem:uniformBoundUpdates}. From \labelcref{eq:ZPlus}, we have that $J_+ = \Tr(C_+^{-1}) = \Tr(C^{-1/2} H_\eta^{-1} C^{-1/2})$. It remains to find the expectation of this under $\hat{\mathbb{P}}$, by identifying the law of $\Xi$ under $\hat{\mathbb{P}}$.

We first identify the effective change of variables for $\Xi$: instead of having standard i.i.d.\ Gaussian entries, the covariance of $\Xi$ under $\hat{\mathbb{P}}$ is derived as follows. We apply the change of variables for the random matrix $\delta \rmX$: 
\begin{gather*}
    \frac{\dd{\mathbb{P}}}{\dd{\mathrm{Leb}}}(\delta \rmX) \sim \gN(\eta \rmH, I_N \otimes 2 \eta \gamma C),\\
    \frac{\dd{\hat{\mathbb{P}}}}{\dd{\mathbb{P}}} (\delta \rmX) \propto \exp(\frac{1}{2} \left(\delta \rmX + \frac{1}{L}\nabla F\right)^\top \left(\frac{a}{N} L I_{Nd}\right) \left(\delta \rmX + \frac{1}{L}\nabla F\right)).
\end{gather*}
Applying the rule for product of Gaussian densities \cite{petersen2008matrix}, we obtain that 
\begin{align*}
    \frac{\dd{\hat{\mathbb{P}}}}{\dd{\mathrm{Leb}}}(\delta \rmX) &\sim \gN(\rmM_c, \Sigma_c),\\
    \Sigma_c &= \left((I_N \otimes 2 \eta \gamma C)^{-1} - \frac{a}{N} L I_{Nd}\right)^{-1}\\
    &= I_N \otimes \left(I_d - 2\frac{a}{N}\eta \gamma LC\right)^{-1} (2 \eta \gamma C),\\
    \rmM_c &= \Sigma_c \left(\frac{aL}{N} \frac{1}{L}\nabla F + (I_N \otimes 2\eta \gamma C)^{-1} \eta \rmH\right)\\
    &= \left(2\frac{a}{N} \eta \gamma \nabla F C + \eta \rmH\right) \left(I - 2\frac{a}{N}\eta \gamma LC\right)^{-1}.
\end{align*}
Applying this to the update \labelcref{eq:ZPlus}, we compute
\begin{align*}
    P_c \rmM_c C^{-1/2} &= \left(2\frac{a}{N} \eta \gamma \tG C^{1/2} + \eta P_c \rmH C^{-1/2}\right) \left(I_d - 2\frac{a}{N}\eta \gamma LC\right)^{-1}\\
    &= \eta \Delta_\rmX + \left(2\frac{a}{N}\eta \gamma \tG C^{1/2} + 2\frac{aL}{N} \eta^2 \gamma \Delta_\rmX C\right)\left(I_d - 2\frac{a}{N}\eta \gamma LC\right)^{-1}\\
    &= \eta \Delta_\rmX + 2\frac{a \gamma}{N} \eta (\tG + L \eta P_c \rmH)\left(I_d - 2\frac{a}{N} \eta \gamma LC\right)^{-1}C^{1/2}.
\end{align*}
Therefore, the centered update is distributed as
\begin{align*}
    \rmY_+C^{-1/2} &= \rmZ + P_c \delta \rmX C^{-1/2}\\
    &= \rmZ + \eta U_\eta + \sqrt{2\eta} P_c \Xi' R_\eta^{1/2},
\end{align*}
where $\Xi'$ is an $N \times d$ matrix with i.i.d.\ standard Gaussian entries, and $U_\eta, R_\eta$ are defined as 
\begin{align}
    U_\eta &\coloneqq \Delta_\rmX + 2\frac{a\gamma}{N} (L \eta P_c \rmH + \tG) \left(I_d - 2\frac{a}{N} \eta \gamma LC\right)^{-1} C^{1/2}, \label{eq:UEta}\\
    R_\eta &\coloneqq \gamma \left(I_d - 2\frac{a}{N} \eta \gamma LC\right)^{-1}.
\end{align}

To apply \Cref{lem:uniformBoundUpdates}, we show that $\gamma I_d\preceq R_\eta \preceq 2 \gamma I_d$ and $U_\eta$ is uniformly bounded over all $\rmX$, for sufficiently small $\eta$. The inequality conditions for $R_\eta$ are directly satisfied under \Cref{cond:boundedS}.

To show that $U_\eta$ is uniformly bounded, we consider the zeroth and first order components in \labelcref{eq:UEta}. The zeroth order component is
\begin{equation*}
    \Delta_\rmX + \frac{2a\gamma}{N} \tG C^{1/2}.
\end{equation*}
The first order terms are
\begin{align*}
    &\quad 2\frac{aL}{N} \eta\gamma P_c \rmH C^{1/2} + (\gamma \eta P_c \rmH + \gamma \tG) \mathcal{O}(\eta\gamma C)C^{1/2} \\
    &= \mathcal{O}(\eta \Delta_\rmX \gamma C) + \mathcal{O}(\eta \Delta_\rmX \gamma C + \gamma \tG C^{1/2})\mathcal{O}(\eta \gamma C).
\end{align*}
From \labelcref{eq:DeltaX,eq:GCnHalfBound,eq:gammaBounds}, as well as $\|\rmZ\|_F^2 = Nd$, we have the uniform bound
\begin{align*}
    \Delta_\rmX &= -\gamma \tG C^{1/2} + \rmZ B\\
    &= \mathcal{O}(\gamma T) + \mathcal{O}(\gamma + \gamma'C) = \mathcal{O}(1).
\end{align*}
A similar argument shows that the zeroth order component is $\mathcal{O}(1)$ and the first order component is $\mathcal{O}(\eta)$ uniformly over all $\rmX$. Therefore, $U_\eta$ is uniformly bounded for sufficiently small $\eta \le 1$ satisfying \Cref{cond:boundedS}. 

The assumptions of \Cref{lem:uniformBoundUpdates} are now satisfied. This yields
\begin{equation}
    \hat{\mathbb{E}} H_\eta^{-1} = I + \eta B_{-1} + \mathcal{O}(\eta^{3/2}),
\end{equation}
where $B_{-1}$ is defined as
\begin{align*}
    B_{-1} = -\frac{1}{N} \left[\rmZ^\top U_\eta + U^\top_\eta \rmZ\right] + \frac{2}{N} \left[(d+3-N)R_\eta + (\Tr R_\eta)I_d\right].
\end{align*}
Using the first order expansion
\begin{align*}
    U_\eta &= -(1-\frac{2a}{N}) \gamma \tG C^{1/2}+ \rmZ B + \mathcal{O}(\eta),
\end{align*}
we obtain
\begin{align*}
    B_{-1} &= -\frac{1}{N} \left[\rmZ^\top \left(-\left(1 - \frac{2a}{N}\right) \gamma \tG C^{1/2} + \rmZ B\right) +\left(-\left(1 - \frac{2a}{N}\right) \gamma \tG C^{1/2} + \rmZ B\right)^\top \rmZ \right]\\
    &\qquad + \frac{2}{N} \left[(d+3-N) \gamma I_d + d \gamma I_d\right] + \mathcal{O}(\eta) \\
    &= \frac{1}{N} \left(1-\frac{2a}{N}\right) \gamma (\rmZ^\top \tG C^{1/2} + C^{1/2} \tG^\top \rmZ) - 2B + \frac{2}{N} \gamma(2d+3-N) I_d + \mathcal{O}(\eta).
\end{align*}
Substituting into the trace expression for $J_+$ yields
\begin{align*}
    \hat{\mathbb{E}}J_+ &= \Tr(\hat{\mathbb{E}}[H_\eta^{-1}] C^{-1})\\
    &= J + \eta \Tr(B_{-1}C^{-1}) + \mathcal{O}(\eta^{3/2}J)\\
    &= J + \eta \Tr(\frac{2}{N}\left(1-\frac{2a}{N}\right)\gamma \rmZ^\top \tG C^{-1/2} - 2BC^{-1})+ \frac{2}{N}\eta\gamma(2d+3-N)J + \mathcal{O}(\eta^{3/2}J)\\
    &= J + \eta \gamma J\left(-2 + \frac{4d+6}{N}\right)  \\
    &\qquad + \eta \Tr(\frac{2}{N} \left(1 - \frac{2a}{N}\right) \gamma C^{-1} \rmY^\top \tG - 2\frac{d+1}{N} \gamma C^{-1} - \frac{4\gamma'}{N}I_d) + \mathcal{O}(\eta^{3/2}J)\\
    &= J +  \eta \gamma J \left(-2 + \frac{2d+4}{N}\right) \\
    &\qquad + \eta \gamma \left(2 - \frac{4a}{N}\right) \Tr(KC^{-1}) - \frac{4d\gamma'}{N} \eta + \mathcal{O}(\eta^{3/2} J),
\end{align*}
where the last equality uses $\rmY^\top \tG = NK$ and the definitions of $\rmZ$ and $B$. Using the upper bound $\Tr(KC^{-1}) \le L \sqrt{TJ}$, we conclude
\begin{equation}
    \hat{\mathbb{E}}\left[\frac{J_+}{J}\right] \le 1 + \left(-2 + \frac{2d+4}{N}\right) \gamma \eta + \left(2 - \frac{4a}{N}\right) L \sqrt{\frac{T}{J}} \gamma \eta + \frac{4d |\gamma'|}{NJ}\eta + \mathcal{O}(\eta^{3/2}). \label{eq:JplusJBound}
\end{equation}

Combining \labelcref{eq:logZBound,eq:JplusJBound} yields, for constants $\kappa_1, \kappa_2>0$ independent of $a$ and depending only on $N,d,L,\theta$,
\begin{align}
    \frac{Q_\eta W_a}{W_a} &\le \exp \left(\eta \gE_{a} + \kappa_1 \eta^2 (1+\gamma D)\right) (1 + \eta\gJ_{a} + \kappa_2 \eta^{3/2}),\\
    \gE_a &\coloneqq -c_a \gamma D + \frac{a}{N}(d+1+N) \gamma LT + \frac{2aL}{N} |\gamma'|T^2,\\
    \gJ_a &\coloneqq \left(-2 + \frac{2d+4}{N}\right) \gamma + \left(2 - \frac{4a}{N}\right) L \sqrt{\frac{T}{J}} \gamma + \frac{4 d|\gamma'|}{NJ}, \label{eq:Ja}
\end{align}
for all sufficiently small $\eta$ such that \Cref{cond:boundedS} holds.

\subsubsection{Foster--Lyapunov drift condition}
We now choose an $a>0$ such that for all sufficiently small $\eta$, \Cref{cond:boundedS} holds and the Markov kernel $Q_\eta$ satisfies a Foster--Lyapunov decay on $W_a$.

First observe that $\gJ_a \ge -2$. Moreover, using $J^{-1} \le Td^{-2}$, we have that the second and third terms in \labelcref{eq:Ja} are $\mathcal{O}(\gamma T) = \mathcal{O}(1)$ and $\mathcal{O}(|\gamma'| T) = \mathcal{O}(1)$ respectively. Therefore, $\gJ_a$ is uniformly bounded over all $\rmX$ for $a\le 1$. We thus assume the following condition also holds:
\begin{condition}\label{cond:logWellDefined}
    The step size $\eta$ is sufficiently small such that for all $a \in [0,1]$, we have $|\eta \gJ_a| \le 1/2$.
\end{condition}
Using the inequality $\log(1+u) \le u$, it is sufficient to show that 
\begin{equation}
    \eta \gE_a  + \kappa_1 \eta^2 (1+\gamma D) + \eta \gJ_a + \kappa_2 \eta^{3/2}
\end{equation}
is uniformly negative away from a compact set. Since $f$ is distantly convex, there exist constants $C_T>0, B_T \in \R$ such that $D \ge C_T T^2 - B_T$. 

Choose $a>0$ sufficiently small such that 
\begin{equation}
    \sup_{T \ge 0}\left\{ -\frac{c_a}{2} (C_T T^2 - B_T) + \frac{a}{N}(d+1+N) LT + \frac{2aL}{N} T \right\}\le 1 - \frac{d+2}{N}.
\end{equation}
This is possible since all coefficients are $\mathcal{O}(a)$. For this choice of $a$, \labelcref{eq:gammaBounds} gives that 
\begin{equation*}
    \gE_a \le \left(1 - \frac{d+2}{N}\right) \gamma - \frac{c_a}{2} \gamma D.
\end{equation*}

For this choice of $a>0$, it is now sufficient to show that
\begin{equation}
    -\frac{c_a}{2} \eta \gamma D + \kappa_1 \eta^2 (1+\gamma D) + \left(-1 + \frac{d+2}{N}\right)\eta\gamma + \left(2 - \frac{4a}{N}\right) L \sqrt{\frac{T}{J}} \eta \gamma + \frac{4 d|\gamma'|}{NJ}\eta + \kappa_2 \eta^{3/2}
\end{equation}
 is uniformly negative outside a compact set. We now let $\eta$ be sufficiently small such that $-c_a \eta \gamma D/4 + \kappa_1 \eta^2 \gamma D \le 0$ (noting $D \ge 0$), and absorb $\kappa_1 \eta^2$ into $\kappa_2 \eta^{3/2}$.

It now suffices to show that 
 \begin{equation} \label{eq:FinalDiscreteBound}
     h \coloneqq -\frac{c_a}{4} \eta \gamma D + \left(-1 + \frac{d+2}{N}\right)\eta \gamma + \left(2 - \frac{4a}{N}\right) L \sqrt{\frac{T}{J}} \eta \gamma + \frac{4 d|\gamma'|}{NJ}\eta + \kappa_3 \eta^{3/2}
 \end{equation}
 is uniformly negative outside a compact set, where $\kappa_3$ is another constant independent of $a, \eta$. The argument proceeds with a similar partition as in the continuous case.
     
 \begin{enumerate}
     \item Since the first term of \labelcref{eq:FinalDiscreteBound} is negative and of order $\gamma T^2 \sim T$, and the other terms are all uniformly bounded by a constant, we can choose $T_0$ sufficiently large such that for all $T \ge T_0$, 
     \begin{equation*}
         -\frac{c_a}{4} \eta \gamma D + \left(-1 + \frac{d+2}{N}\right)\eta \gamma + \left(2 - \frac{4a}{N}\right) L \sqrt{\frac{T}{J}} \eta \gamma + \frac{4 d|\gamma'|}{NJ}\eta \le -2\eta,
     \end{equation*}
     for all $\eta$.
     
     \item On $\{T \le T_0\}$, $\gamma$ is lower bounded by $\gamma \ge \gamma_0 > 0$. Now choose $\eta$ sufficiently small such that 
     \begin{equation*}
         \kappa_3 \eta^{3/2} \le \frac{1}{2}\left(1 - \frac{d+2}{N}\right) \gamma_0 \eta\quad \text{and} \quad \kappa_3 \eta^{3/2} \le \eta.
     \end{equation*}
     This is the final step size restriction. We thus have that on $\{T \ge T_0\}$, $h \le -\eta$; furthermore on $\{T \le T_0\}$,
     \begin{equation*}
         h \le h' \coloneqq -\frac{c_a}{4} \eta \gamma D + \frac{1}{2}\left(-1 + \frac{d+2}{N}\right) \eta \gamma + \left(2 - \frac{4a}{N}\right) L \sqrt{\frac{T}{J}} \eta \gamma + \frac{4 d|\gamma'|}{NJ}\eta.
     \end{equation*}
    The first term is non-positive. Choosing sufficiently large $J_0$, we have for $J \ge J_0$,  
    \begin{equation*}
        h' \le \frac{1}{4} \left(-1 + \frac{d+2}{N}\right) \eta \gamma \le \frac{1}{4}\left(-1 + \frac{d+2}{N}\right) \eta \gamma_0 < 0.
    \end{equation*}

     \item On $\{T \le T_0, J \le J_0\}$, we can now use the lower bound $D \ge c_2 E/J_0 - b_2 / J_0$ from \Cref{prop:DistantConvexity}. Finally choose sufficiently large $E_0$ such that if $E \ge E_0$, then
     \begin{equation*}
         h' \le \frac{1}{4}\left(-1 + \frac{d+2}{N}\right) \eta \gamma_0,
     \end{equation*}
     since all other terms in $h'$ are bounded by a constant (depending on $T_0, J_0$).

     \item The remaining region $\mathrm{K} = \{T \le T_0, J \le J_0, E \le E_0\} \subset 
     \mathrm{M}$ is compact from \Cref{prop:KCompact}. Since all terms inside the exponential are $\mathcal{O}(\eta)$, $h$ is upper bounded on this region by $b\eta >0$ for all $\eta \le 1$, where $b$ depends on $a$ but not $\eta$.
 \end{enumerate}

We have shown that there exists $a>0$ such that for all sufficiently small $\eta$, the following bound holds:
\begin{equation}
    \frac{Q_\eta W_a}{ W_a} \le \exp(\frac{1}{4}\left(-1 + \frac{d+2}{N}\right)\eta \gamma_0 + b \eta \mathbf{1}_\mathrm{K})
\end{equation}
for some constant $b$ and compact set $\mathrm{K}$ depending on $a$ but not on $\eta$. Since $N \ge d+3$, this shows the desired Foster--Lyapunov condition. Geometric ergodicity follows from \cite[Thm. 6.3]{meyn1992stability1}.

\subsubsection{Weak convergence of stationary distributions}
To show that the stationary distributions converge, we use that the same Lyapunov function is used in both continuous and discrete-time Foster--Lyapunov conditions. Under the same assumptions, \Cref{thm:timeScaleErgo} gives that $\Pi_*$ is the unique invariant distribution of $\bar \gL$. 

The collection of discrete-time stationary distributions $\Pi_{*, \eta}$ for $\eta < \eta_*$ are tight: since $a>0$ is fixed, from \labelcref{eq:DiscreteTimeLyapunovDecay}, we have that 
\begin{equation}
    Q_{\eta} \Pi_{*,\eta} W_a \le (1 - c \eta) \Pi_{*, \eta}W_a  + b \eta \quad \Rightarrow \quad \Pi_{*, \eta}W_a  \le \frac{b}{c}
\end{equation}
for some $c,b>0$ independent of $\eta$. By Markov's inequality we obtain that
\begin{equation*}
    \Pi_{*, \eta} \mathbf{1}(W_a \ge R) \le \frac{b}{cR}.
\end{equation*}
Since $W_a$ is coercive, we have that the family of measures $\{\Pi_{*, \eta} \mid \eta \in(0,\eta_*)\}$ is tight. 

By Prokhorov's theorem, there exists a limiting probability measure $\nu$ such that up to a subsequence, $\Pi_{*, \eta} \rightharpoonup \nu$. We now wish to show that $\nu = \Pi_*$, the stationary distribution of the continuous flow. By \cite[Thm. 4.9.17]{echeverria1982criterion,ethier2009markov}, it suffices to show that $\nu(\bar \gL \phi) = 0$ for all test functions $\phi \in \gC_c^\infty(\mathrm{M})$, where we recall $\bar \gL$ is the generator of the continuous time-scaled process. Since $\Pi_{*, \eta}$ is stationary,
\begin{equation}
    0 = \int \frac{Q_\eta \phi - \phi}{\eta}\dd{\Pi_{*, \eta}} = \int \left(\frac{Q_\eta \phi - \phi}{\eta} - \bar \gL \phi\right)\dd{\Pi_{*,\eta}} + \int \bar \gL \phi \dd{\Pi_{*,\eta}}.
\end{equation}
It remains to prove that for any test function $\phi$, 
\begin{equation}\label{eq:QDiscreteSupPhi}
    \left\|\frac{Q_\eta \phi - \phi}{\eta} - \bar \gL \phi\right\|_{\infty} \rightarrow 0\quad \text{as } \eta \rightarrow 0,
\end{equation}
which would show that $\Pi_{*, \eta} (\bar \gL \phi) \rightarrow 0$ as $\eta \rightarrow 0$ and therefore $\nu(\bar \gL \phi) =  0$.

Recall from \labelcref{eq:XPlus} that the discrete-time update can be written in terms of a deterministic and random component $\rmX_+ = \rmX + \eta \rmH + \Xi \sqrt{2 \gamma \eta C}$. Moreover, the continuous generator $\bar \gL$ of the time scaled process \labelcref{eqs:scalingDiffusionDefinition} can be written as follows, where $\rvh_j$ are the rows of $\rmH$,
\begin{equation*}
     \bar \gL V = \sum_j \rvh_j \cdot \nabla_j V + \gamma \sum_j C\!:\! \nabla^2_j V.
\end{equation*}
Let $S' = \supp \phi + \bar B_r(0)$ for sufficiently small $r>0$ such that $S' \subset \mathrm{M}$. Within $S'$, the uniform limit \labelcref{eq:QDiscreteSupPhi} holds directly by Taylor's theorem, as $\mathbb{E}\|\rmX_+ - \rmX\|^3_F = \mathcal{O}(\eta^{3/2})$ uniformly. 

Outside $S'$, we have that $\bar \gL \phi = 0$. Showing \labelcref{eq:QDiscreteSupPhi} follows from proving
\begin{equation*}
    \sup_{S'^c} \left|\eta^{-1} {Q_\eta \phi}\right| \rightarrow 0.
\end{equation*}

This follows from Gaussian concentration: recall from \labelcref{eqs:timescaleIneqs} that $\|\rmH\|_F \lesssim 1 + \|\rmX\|_F$ (since $\gamma D \lesssim \|\rmX\|_F^2$). Therefore, for sufficiently small $\eta$ independent of $\rmX$, we have that $\eta \|\rmH\|_F \le \frac{1}{2}\dist(\rmX, \supp \phi)$ for all $\rmX \in S'^{c}$. Therefore,
\begin{align*}
    \eta^{-1} |Q_\eta \phi(\rmX)| &\le \eta^{-1} \mathbb{E} |\phi(\rmX_+)|\\
    &\le \eta^{-1} \|\phi\|_\infty \mathbb{P}(\rmX_+ \in \supp \phi)\\
    &\le \eta^{-1} \|\phi\|_\infty \mathbb{P}(\|\Xi \sqrt{2 \eta \gamma C}\| \ge \dist(\rmX, \supp \phi) - \eta \|\rmH\|_F)\\
    &\le \eta^{-1} \|\phi\|_\infty \mathbb{P}(\|\Xi \sqrt{2 \eta \gamma C}\| \ge \frac{1}{2}\dist(\rmX, \supp \phi))\\
    &\lesssim \eta^{-1} \|\phi\|_\infty \exp(-c \dist(\rmX, \supp \phi)^2 \eta^{-1})\\
    &\lesssim \eta^{-1} \|\phi\|_\infty \exp(-cr^2 \eta^{-1}) \rightarrow 0
\end{align*}
as $\eta \rightarrow 0$, since $\|\gamma C\| \le \theta^{-1}$. Here $c$ is a constant depending on $\supp \phi$ and $\theta$, and is independent of $\rmX$. Taking supremum yields the desired limit \labelcref{eq:QDiscreteSupPhi}, and therefore $\nu = \Pi_*$. Taking subsequences concludes that $\Pi_{*, \eta} \rightharpoonup \Pi_*$.

\section{Discussion}
At the continuous finite-particle level, we prove geometric ergodicity of the affine invariant ensemble Langevin dynamics using a Lyapunov function that controls both escape to infinity and covariance collapse. At the discrete level, we show that when using the full ensemble covariance or leave-one-out covariance, the direct Euler--Maruyama scheme can diverge with positive probability. A covariance-trace time regularization restores geometric ergodicity in continuous and discrete time, although it sacrifices scale invariance.

The common threshold $N\ge d+3$ comes from the inverse-trace coefficient in the Lyapunov estimates; the borderline case $N=d+2$ requires a different boundary weight. Other directions include replacing the global Hessian bound by a growth condition, weakening distant convexity to a more general dissipativity assumption, and constructing a stable explicit discretization that retains full affine invariance. 

More broadly, the techniques developed here are not specific to the dynamics \labelcref{eq:aldiDynamics}. Covariance degeneracy is the common obstacle to quantitative rates across affine-invariant ensemble methods, and we expect these tools to be useful for other affine invariant methods in sampling and data assimilation, for which geometric ergodicity in the finite-particle regime is not yet available.

\section*{Acknowledgments}
We acknowledge helpful conversations with Andrew Stuart and Jonathan Weare. This work is supported by National Science Foundation grant DMS-2608264. Generative AI tools, in particular GPT Sol 5.6, were used to test candidate Lyapunov functions for the continuous-time and regularized diffusions, to help formulate \cref{lem:uniformBoundUpdates}, and to polish the writing. The authors retain full responsibility for the mathematical correctness of this manuscript.

\begin{appendix}
\section{Supporting definitions} \label{appsec:defs}
\begin{definition}
    Let $\mathsf{X}$ be a locally compact separable metric space. A function $V: \mathsf{X} \rightarrow \R_+$ is \emph{norm-like} if the level sets $\{x \mid V(x) \le B\}$ are precompact for each $B>0$.

    Let $\{O_n \mid n \in \mathbb{N}\}$ be a fixed set of open precompact (compact closure) sets with $O_n \uparrow \mathsf{X}$ as $n \rightarrow \infty$. A process $\mathbf{\Phi}$ is \emph{non-explosive} if for all $x \in \mathsf{X}$, the exit times $\tau_n = \inf \{ t \mid \mathbf{\Phi} \text{ leaves } O_n\}$ satisfy $\mathbb{P}_x(\lim_{n \rightarrow \infty} \tau_n = \infty) = 1$ for all $x \in \mathsf{X}$.

    A process $\mathbf{\Phi}$ is \emph{non-evanescent} if $\mathbb{P}_x (\mathbf{\Phi} \rightarrow \infty) = 0$ for all $x \in \mathsf{X}$. Here $\mathbf{\Phi} \rightarrow \infty$ means that $\{\Phi_t \notin \mathrm{K}\}$ for any compact $\mathrm{K} \subset \mathsf{X}$ and all sufficiently large $t$.
\end{definition}

\section{Supporting proofs}
\subsection{Hypoelliptic diffusions are Harris recurrent}\label{appsec:HarrisDiffusion}
In order to transfer positive recurrence (and Lebesgue-irreducibility) to ergodicity, one requires the additional requirement that the process is Harris. This requires a short additional argument since \cite[Thm. 2.1]{meyn1993stability} shows only non-explosiveness, while the stronger concept of non-evanescence is the equivalent condition for Harris recurrence. 
\begin{enumerate}
    \item Since the diffusion is hypoelliptic, it is Feller. Moreover, the path-connectedness shows that it is Lebesgue-irreducible \cite{kliemann1987recurrence}.
    \item This implies that for some skeleton chain, we have that all compact sets are petite \cite{meyn1992stability1}.
    \item Since we have shown that the affine invariant ensemble Langevin diffusion is \textit{positive recurrent}, it is in particular recurrent. Therefore, there exists a compact $\mathrm{K}' \subset \mathrm{M}$ such that starting from any $\rmX \in \mathrm{M}$, it will hit $\mathrm{K}'$ with probability 1. \cite[Thm. 3.1(iv)]{kliemann1987recurrence}
    \item Since $\mathrm{K}'$ is compact, it is a petite set with a.s. finite hitting time from any $\rmX$. This implies that the process is Harris recurrent \cite[Thm. 4.3]{meyn1993stability2}.
\end{enumerate}

\subsection{Proofs in Section \ref{ssec:leaveoneout}} \label{appsec:leaveOneOutIneqs}

\textit{Upper bound for quadratic forms of Gaussians}. We stated that if $U \sim \gN (m, \sigma^2)$ is a one-dimensional Gaussian, then
\begin{equation*}
    \mathbb{P}(|aU^2 + bU + c| \le u) \le \frac{2}{\sqrt{\pi |a| \sigma^2}} \sqrt{u}.
\end{equation*} 
We claim that for $a \ne 0$, the length of the sublevel set $\{t \in \R \mid |at^2 + bt + c| \le u\}$ is bounded by $2\sqrt{2u}/\sqrt{|a|}$. Completing the square, without loss of generality $b=0$ and $a>0$, and it will be convenient to replace $c$ with $-c$. The interval becomes
\begin{equation*}
    |at^2 - c| \le u \quad \Leftrightarrow \quad c-u \le at^2 \le c+u.
\end{equation*}

\textit{Case 1.} $-u < c < u$. The feasible set is $at^2 \le c+u$. The claim holds.

\textit{Case 2.} $c \ge u$. The desired intervals have total length $2(\sqrt{c+u} - \sqrt{c-u}) \le 2\sqrt{2u}$, taking supremum over $c \ge u$. 

\textit{Case 3.} For $c \le -u$, the bound holds trivially.

The desired inequality follows using the density upper bound $p_U \le 1/\sqrt{2\pi \sigma^2}$.

\textit{Representations of the leave-one-out covariances.} The first inequality in \labelcref{eq:leaveoneout} follows directly from the preceding representation. For $i \ne j$, we have
\begin{equation*}
    C_{-i}(\rmX) + C_{-j}(\rmX) = \frac{2N}{N-1} C(\rmX) - \frac{N}{(N-1)^2} (\delta_i ^2 + \delta_j^2),
\end{equation*}
where $\delta_i = x_i - \bar x$ and similar for $\delta_j$. Since
\begin{equation*}
    \delta_i^2 + \delta_j^2 \le \sum_{k} \delta_k^2 = NC(\rmX),
\end{equation*}
we obtain the desired inequality
\begin{equation*}
    C_{-i}(\rmX) + C_{-j}(\rmX) \ge \left(\frac{2N}{N-1} - \frac{N^2}{(N-1)^2}\right) C(\rmX) = \frac{N(N-2)}{(N-1)^2} C(\rmX).
\end{equation*}

\subsection{Proof of Lemma \ref{lem:traceIneqs}}\label{appsec:traceIneqs}
Here and below, we restate the result before proving it. 
\begin{lemma*}
    Assume $\|\nabla^2f\| \le L$. The following trace inequalities hold.
    \begin{align*}
        |\Tr K| &\le LT,  &|\Tr (KC)| &\le LT^2, &|\Tr(KC^2)| &\le LT^3,\\
        |\Tr (KC^{-1})| &\le L\sqrt{TJ},&TJ&\ge d^2 , &J^{-1} |\Tr(KC^{-1})|&\le \frac{LT}{d}. 
    \end{align*}
\end{lemma*}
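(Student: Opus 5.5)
The plan is to reduce every inequality to a Cauchy--Schwarz estimate on the symmetrized representation of $K$. Write $\boldsymbol{\delta}_i=\rvx_i-\bar\rvx$. Since $\sum_i\boldsymbol{\delta}_i=0$, we may replace $\rvg_i$ by $\rvg_i-\nabla f(\bar\rvx)$, so
\[
K=\frac1N\sum_i \boldsymbol{\delta}_i\rvh_i^\top,\qquad \rvh_i\coloneqq \rvg_i-\nabla f(\bar\rvx).
\]
By the mean value theorem and $\|\nabla^2 f\|\le L$, we have $\|\rvh_i\|\le L\|\boldsymbol{\delta}_i\|$. In fact $\rvh_i=A_i\boldsymbol{\delta}_i$ with $A_i=\int_0^1\nabla^2 f(\bar\rvx+s\boldsymbol{\delta}_i)\dd s$, which is symmetric and satisfies $\|A_i\|\le L$.

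For a symmetric matrix $M$ we have $\Tr(KM)=\frac1N\sum_i \rvh_i^\top M\boldsymbol{\delta}_i=\frac1N\sum_i\boldsymbol{\delta}_i^\top A_iM\boldsymbol{\delta}_i$. We estimate this in three cases.

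\textbf{Case $M=I$.} Cauchy--Schwarz gives $|\Tr K|\le \frac1N\sum_i L\|\boldsymbol{\delta}_i\|^2=LT$.

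\textbf{Case $M=C$ or $M=C^2$.} Here $|\boldsymbol{\delta}_i^\top A_iM\boldsymbol{\delta}_i|\le L\|M\|\|\boldsymbol{\delta}_i\|^2$. Combined with $\|C\|\le T$ and $\|C^2\|\le T^2$, this gives $LT^2$ and $LT^3$ respectively.

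\textbf{Case $M=C^{-1}$.} This is the only step needing care, since $\|C^{-1}\|$ is not controlled by $J$ in the right way. Instead, write $\boldsymbol{\delta}_i^\top A_iC^{-1}\boldsymbol{\delta}_i=(A_i\boldsymbol{\delta}_i)\cdot(C^{-1}\boldsymbol{\delta}_i)$ and apply Cauchy--Schwarz over both the vector and the sum:
\[
|\Tr(KC^{-1})|\le \Big(\frac1N\sum_i L^2\|\boldsymbol{\delta}_i\|^2\Big)^{1/2}\Big(\frac1N\sum_i \boldsymbol{\delta}_i^\top C^{-2}\boldsymbol{\delta}_i\Big)^{1/2}.
\]
The second factor equals $\Tr(C^{-2}C)^{1/2}=J^{1/2}$, because $\frac1N\sum_i\boldsymbol{\delta}_i\boldsymbol{\delta}_i^\top=C$. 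The first factor is $L\sqrt T$. This gives $|\Tr(KC^{-1})|\le L\sqrt{TJ}$. If $\Tr$ were to be read as requiring symmetry of $K$, no issue arises, since the traces above are computed directly.

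\textbf{The remaining two bounds.} The inequality $TJ\ge d^2$ follows from Cauchy--Schwarz on the eigenvalues: $\big(\sum\lambda_k\big)\big(\sum\lambda_k^{-1}\big)\ge d^2$. Finally,
\[
J^{-1}|\Tr(KC^{-1})|\le L\sqrt{T/J}=L\,T/\sqrt{TJ}\le LT/d.
\]
The main (mild) obstacle is the inverse case, where the trick is to pair the factor $C^{-1}\boldsymbol{\delta}_i$ against the covariance identity rather than bounding it by an operator norm.
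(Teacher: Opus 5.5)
Your proposal is correct and follows the paper's proof essentially step for step: you center $\rvg_i$ by $\nabla f(\bar\rvx)$, write the difference via the averaged Hessian, and apply Cauchy--Schwarz with $\frac1N\sum_i \boldsymbol{\delta}_i^\top C^{-2}\boldsymbol{\delta}_i = J$ for the inverse case, and you get $TJ\ge d^2$ from Cauchy--Schwarz as well. The only difference is cosmetic: you use the averaged-Hessian representation for $M=I, C, C^2$ too, whereas the paper uses the Lipschitz bound directly for $\Tr K$ and cites H\"older's inequality for $\Tr(KC)$ and $\Tr(KC^2)$.
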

\begin{proof}
    \begin{align*}
        |\Tr K| &= \frac{1}{N} |\sum_i (\rvx_i - \bar \rvx)^\top (\rvg_i - \rvg(\bar \rvx))| \le \frac{1}{N}\sum_i \|\rvx_i - \bar \rvx\| \|\rvg_i - \rvg(\bar \rvx)\|\\
        &\le \frac{L}{N} \sum_i \|\rvx_i - \bar \rvx\|^2 = LT.
   \end{align*}
    The bounds on $\Tr(KC)$ and $\Tr(KC^2)$ follow from H\"older's inequality on matrix norms.

    To bound $\Tr(KC^{-1})$, let $\gamma_i:[0,1] \rightarrow \R^d$ be the straight line from $\bar \rvx$ to $\rvx_i$. Then
    \begin{align*}
        \rvg_i - \rvg(\bar \rvx) &= \int_0^1 \nabla^2 f(\gamma_i(s)) \dd{\gamma_i(s)}\\
        &= \bar H_i (\rvx_i - \bar \rvx),\quad  \bar H_i \coloneqq \int_0^1 \nabla^2 f(\gamma_i(s)) \dd{s}.
    \end{align*}
    Here $H_i$ are some symmetric matrices satisfying $\|H_i\| \le L$. Let $\boldsymbol{\delta}_i = \rvx_i - \bar \rvx$. The trace can thus be written as 
    \begin{align*}
        \Tr(KC^{-1}) = \frac{1}{N} \sum_i \boldsymbol{\delta}_i^\top \bar H_i C^{-1} \boldsymbol{\delta}_i.
    \end{align*}
    Taking absolute values,
    \begin{align*}
        \left|\Tr(KC^{-1})\right| &\le \frac{1}{N} \sum_i \|\bar H_i \boldsymbol{\delta}_i\| \|C^{-1} \boldsymbol{\delta}_i\|\\
        &\le \left(\frac{1}{N} \sum \|\bar H_i \boldsymbol{\delta}_i\|^2\right)^{1/2} \left(\frac{1}{N} \sum \| C^{-1} \boldsymbol{\delta}_i\|^2\right)^{1/2}\\
        &\le \left(L^2 \frac{1}{N}\sum_i \boldsymbol{\delta}_i^\top \boldsymbol{\delta}_i\right)^{1/2} \left(\frac{1}{N} \sum \boldsymbol{\delta}_i C^{-2} \boldsymbol{\delta}_i\right)^{1/2} = L \sqrt{\Tr(C) \Tr(C^{-1})},
    \end{align*}
    as desired. The fact that $TJ \ge d^2$ comes from using Cauchy--Schwarz on the Frobenius inner product $\langle A,B\rangle = \Tr(AB^\top)$,
    \begin{align*}
        d = \Tr I = \Tr C^{1/2} C^{-1/2} \le \|C^{1/2}\|_F \|C^{-1/2}\|_F = \sqrt{\Tr(C) \Tr(C^{-1})}.
    \end{align*}

    The final inequality comes from combining the previous two.
\end{proof}

\subsection{Proof of Proposition \ref{prop:DistantConvexity}}\label{appsec:propDistConvex}

\begin{proposition*}
    Suppose that $f$ satisfies \Cref{assmp:distantConvexityND}. There exist constants $c_1, b_1, c_2, b_2$, $C_T, B_T>0$ depending only on $f$ and $d$, such that for all $x \in \R^d$ or ensemble $\rmX \in \R^{N \times d}$,
    \begin{align*}
        f(x) &\ge c_1 \|x\|^2 - b_1,\\
        \|\nabla f(x) \|^2 &\ge c_2 f(x) - b_2,\\
        D &\ge C_T T^2 - B_T + \bar \rvg^\top C \bar \rvg.
    \end{align*}
    Moreover, for any $J_0>0$, the following holds on the set $\{J \le J_0\}$:
    \begin{align*}
        D \ge \frac{c_2}{J_0} E - \frac{b_2}{J_0}.
    \end{align*}
\end{proposition*}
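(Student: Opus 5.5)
The plan is to work with a single quantitative consequence of \Cref{assmp:distantConvexityND}, namely a one-sided monotonicity estimate for $\nabla f$ along segments, and to derive all four inequalities from it. Fix $R>0$ such that $\nabla^2 f\succeq \mu I_d$ outside the ball $\bar B_R(0)$, and use the normalizations $f\ge0$, $\nabla f(0)=0$ stated in the paper. For any $x,y\in\R^d$ the segment $[y,x]$ meets $\bar B_R(0)$ in a subsegment of length at most $\min(2R,\|x-y\|)$. On that subsegment I will only use $\nabla^2 f\succeq -LI_d$; elsewhere on the segment I use $\nabla^2 f\succeq\mu I_d$. Writing $\nabla f(x)-\nabla f(y)=\int_0^1\nabla^2 f(y+s(x-y))(x-y)\,\dd{s}$ and splitting the integral accordingly gives
\begin{equation*}
    (x-y)\cdot(\nabla f(x)-\nabla f(y)) \ge \mu\|x-y\|^2-2(\mu+L)R\|x-y\|. \tag{$\ast$}
\end{equation*}
This is the step I expect to need the most care, since everything else is bookkeeping. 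The point is that the bad region contributes only linearly in $\|x-y\|$, uniformly in where the segment sits.

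For the pointwise bounds, I apply $(\ast)$ with $y=0$. It gives $x\cdot\nabla f(x)\ge\mu\|x\|^2-c\|x\|$. By Cauchy--Schwarz this yields $\|\nabla f(x)\|\ge\mu\|x\|-c$, hence $\|\nabla f(x)\|^2\ge\frac{\mu^2}{2}\|x\|^2-c'$. Integrating along the ray, $f(x)=f(0)+\int_0^1 x\cdot\nabla f(tx)\,\dd{t}\ge \frac{\mu}{2}\|x\|^2-c\|x\|$, which gives $f(x)\ge c_1\|x\|^2-b_1$. Conversely, the bound $\|\nabla^2 f\|\le L$ together with $\nabla f(0)=0$ gives $f(x)\le f(0)+\frac L2\|x\|^2$. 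Combining the two gives $\|\nabla f(x)\|^2\ge c_2 f(x)-b_2$ with $c_2=\mu^2/L$.

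For \labelcref{eq:DLowerbound}, I first expand $D=\frac1N\sum_i(\rvg_i-\bar\rvg)^\top C(\rvg_i-\bar\rvg)+\bar\rvg^\top C\bar\rvg$; the cross term vanishes because $\sum_i(\rvg_i-\bar\rvg)=0$. So it suffices to bound the centered part $D_0$ below by $C_TT^2-B_T$. With $\boldsymbol\delta_i=\rvx_i-\bar\rvx$, the identity $\sum_i\boldsymbol\delta_i=0$ gives $\Tr K=\frac1N\sum_i\boldsymbol\delta_i^\top(\rvg_i-\bar\rvg)$. Inserting $C^{-1/2}C^{1/2}$ and applying Cauchy--Schwarz gives $|\Tr K|\le(\frac1N\sum_i\boldsymbol\delta_i^\top C^{-1}\boldsymbol\delta_i)^{1/2}D_0^{1/2}=\sqrt d\,D_0^{1/2}$, since $\frac1N\sum_i\boldsymbol\delta_i^\top C^{-1}\boldsymbol\delta_i=\Tr(C^{-1}C)=d$. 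For a lower bound on $\Tr K$, I use the pairwise form $\Tr K=\frac1{2N^2}\sum_{i,j}(\rvx_i-\rvx_j)\cdot(\rvg_i-\rvg_j)$ together with $(\ast)$. I also use $\frac1{2N^2}\sum_{i,j}\|\rvx_i-\rvx_j\|^2=T$, and Jensen's inequality to get $\frac1{2N^2}\sum_{i,j}\|\rvx_i-\rvx_j\|\le\sqrt{T/2}$. Together these give $\Tr K\ge\mu T-c\sqrt T\ge\frac\mu2T-c''$. Hence $D_0\ge(\Tr K)_+^2/d\ge\frac{\mu^2}{8d}T^2-B_T$ after absorbing constants; this uses $(u-v)_+^2\ge u^2/2-v^2$.

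Finally, for \labelcref{eq:DLowerbound2}, on $\{J\le J_0\}$ every eigenvalue $\lambda$ of $C$ satisfies $\lambda^{-1}\le J\le J_0$, so $C\succeq J_0^{-1}I_d$. Therefore $D\ge\frac1{J_0}\frac1N\sum_i\|\rvg_i\|^2\ge\frac1{J_0}\frac1N\sum_i(c_2f(\rvx_i)-b_2)=\frac{c_2}{J_0}E-\frac{b_2}{J_0}$, using the second pointwise inequality.
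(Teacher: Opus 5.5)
Your proof is correct and follows essentially the paper's route: the segment monotonicity estimate $(\ast)$, the Cauchy--Schwarz bound $|\Tr K|\le\sqrt{d}\,(D-\bar\rvg^\top C\bar\rvg)^{1/2}$ combined with the pairwise form of $\Tr K$, and $C\succeq J_0^{-1}I_d$ on $\{J\le J_0\}$ are exactly the paper's steps (iii)--(iv). The differences are cosmetic. You obtain the pointwise bounds by applying $(\ast)$ with $y=0$ and integrating along the ray, whereas the paper argues along the ray from $Rx/\|x\|$ to $x$; you also handle the sign of $\Tr K$ explicitly via the positive part, which the paper leaves implicit.
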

\begin{proof}
Let $R>0$ be sufficiently large such that $\nabla^2 f \succeq \mu I_d$ on $\|x\| \ge R$.
\begin{enumerate}[i.]
    \item Given $\|x\| \ge R$, strong convexity between $x$ and $y = Rx/\|x\|$ yields
    \begin{align*}
        f(x) &\ge f(y)  + (x-y)^\top \nabla f(y) + \frac{\mu}{2} (\|x\|-R)^2\\
        &\ge \inf_{\|z\| = R} f(z) - (\|x\|+R)\sup_{\|z\| = R} \|\nabla f(z)\| + \frac{\mu}{2} (\|x\|-R)^2.
    \end{align*}
    Minor rearranging yields $f(x) \ge c_1 \|x\|^2 - b_1$ on $\|x\| \ge R$ for positive $c_1$; increasing $b_1$ makes this hold over all of $\R^d$.
    \item Given $x,y$ as above, FTC yields
    \begin{align*}
        y^\top \nabla f(x) - y^\top \nabla f(y) = \frac{1}{R}\int_{R}^{\|x\|} y^\top \nabla^2 f(tx/\|x\|)y\dd{t} \ge \mu(\|x\|-R) R.
    \end{align*}
    Since $\|y\|=R$, Cauchy--Schwarz gives
    \begin{align*}
        \|\nabla f(x)\| \ge \frac{1}{R}y^\top \nabla f(x) \ge \frac{1}{R}\inf_{\|z\|=R} z^\top \nabla f(z) + \mu (\|x\|-R)  \ge \mu \|x\| - c,
    \end{align*}
    for some constant $c$. Furthermore, the upper bound on $\|\nabla^2 f\|$ gives $f(x) \lesssim 1+\|x\|^2$. Combining the two inequalities concludes $\|\nabla f\|^2 \ge c_2 f(x) - b_2$ for some positive $c_2$.
    \item Applying Cauchy--Schwarz to $K$ yields
    \begin{align*}
        |\Tr K| &\le \left(\frac{1}{N}\sum_j (\rvx_j - \bar \rvx)^\top C^{-1} (\rvx_j - \bar \rvx)\right)^{1/2} \left(\frac{1}{N} \sum_j (\rvg_j - \bar \rvg)^\top C (\rvg_j - \bar \rvg)\right)^{1/2} \\ &= \sqrt{d} \sqrt{D - \bar \rvg^\top C \bar \rvg}.
    \end{align*}
    It suffices to show that $\Tr K \ge c T - b$ for some constants $c>0, b \in \R$. This follows from monotonicity: given $x \ne y \in \R^d$, set $v=x-y$. Define the interval
    \begin{equation*}
     I_{x,y}\coloneqq \{t\in[0,1] \mid \|y+tv\|<R\},
     \qquad \ell_{x,y} \coloneqq |I_{x,y}|.
    \end{equation*}
    The intersection of a line segment with the ball $B_R$ has length at
    most $2R$, so $\ell_{x,y}|v|\le2R$.  Since
    $\nabla^2f\succeq-LI_d$ everywhere and $\nabla^2f\succeq\mu I_d$ outside $B_R$,
    \begin{align*}
     v^\top (\nabla f(x)-\nabla f(y))
     &=\int_0^1 v^\top\nabla^2f(y+tv)v\dd{t}\\
     &\ge \mu|v|^2-(\mu+L)\ell_{x,y}|v|^2\\
     &\ge \mu|v|^2-2R(\mu+L)|v|\\
     &\ge \frac{\mu}{2}|v|^2-b
    \end{align*}
    for some finite $b$. We conclude $\Tr K \ge c T - b$ using
    \begin{equation*}
        \Tr K = \frac{1}{2N^2} \sum_{i,j} (\rvx_i - \rvx_j)^\top (\nabla f(\rvx_i) - \nabla f(\rvx_j)).
    \end{equation*}
    with $v = \rvx_i - \rvx_j$ and summing.
    
    \item This follows from summing (ii) and the lower bound $\rvg^\top C \rvg \ge J_0^{-1}\|\rvg\|^2$.
\end{enumerate}
\end{proof}

\subsection{Proof of Lemma \ref{lem:GeneratorJ}}\label{appsec:GeneratorJ}
This result is a routine computation.
\begin{lemma*}
    The generator applied to $J = \Tr(C^{-1})$ is 
    \begin{equation*}
        \gL J = 2 \Tr(KC^{-1}) + \left[-2 + \frac{2d+4}{N}\right] \Tr(C^{-1}).
    \end{equation*}
\end{lemma*}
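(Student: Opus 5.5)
We compute $\gL J$ directly from the generator \labelcref{eq:ALDIGeneratorDef}, which needs the first and second derivatives of $J=\Tr(C^{-1})$ with respect to each particle $\rvx_i$. Write $\boldsymbol{\delta}_i=\rvx_i-\bar\rvx$.

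\textbf{Derivatives of $C$ and $J$.} A perturbation $\rvx_i\mapsto\rvx_i+\epsilon\rvh$ changes $C$, to first order, by
\begin{equation*}
\dot C=\tfrac{1}{N}(\rvh\boldsymbol{\delta}_i^\top+\boldsymbol{\delta}_i\rvh^\top),
\end{equation*}
because $\sum_j\boldsymbol{\delta}_j=0$ kills the contribution from the mean shift. Hence $\nabla_i J=-\tfrac{2}{N}C^{-2}\boldsymbol{\delta}_i$, which is the formula already used in the proof of \Cref{thm:timeScaleErgo}.

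For the Hessian, differentiate once more in a direction $\rvh$:
\begin{equation*}
\rvh^\top\nabla_i^2 J\,\rvh=-\tfrac{2}{N}\rvh^\top\big(-C^{-1}\dot C C^{-2}-C^{-2}\dot C C^{-1}\big)\boldsymbol{\delta}_i-\tfrac{2}{N}\big(1-\tfrac1N\big)\rvh^\top C^{-2}\rvh .
\end{equation*}
The factor $(1-1/N)$ appears because $\boldsymbol{\delta}_i$ itself moves by $(1-1/N)\rvh$.

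\textbf{Drift part.} Contract the drift against $\nabla_iJ$ and sum over $i$:
\begin{equation*}
\sum_i\big(-C\rvg_i+\tfrac{d+1}{N}\boldsymbol{\delta}_i\big)\cdot\big(-\tfrac2N C^{-2}\boldsymbol{\delta}_i\big)=\tfrac{2}{N}\sum_i\rvg_i^\top C^{-1}\boldsymbol{\delta}_i-\tfrac{2(d+1)}{N^2}\sum_i\boldsymbol{\delta}_i^\top C^{-2}\boldsymbol{\delta}_i .
\end{equation*}
The first sum is $2\Tr(KC^{-1})$. For the second, $\tfrac1N\sum_i\boldsymbol{\delta}_i\boldsymbol{\delta}_i^\top=C$ gives $\tfrac{2(d+1)}{N}\Tr(C^{-1})$ with a minus sign.

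\textbf{Diffusion part.} Compute $\sum_i C\!:\!\nabla_i^2J$ by contracting the Hessian with $C$, i.e. taking the trace against $C$ in $\rvh$. Each piece reduces with $\tfrac1N\sum_i\boldsymbol{\delta}_i\boldsymbol{\delta}_i^\top=C$.

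The $(1-1/N)$ term gives $-2(1-\tfrac1N)\Tr(C^{-1})$.

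The $\dot C$ terms expand into four pieces, each a trace contraction:
\begin{itemize}
\item Two pieces of the form $\tfrac{2}{N^2}\sum_i\Tr(C^{-1}\boldsymbol{\delta}_i\boldsymbol{\delta}_i^\top)\Tr(C^{-1})$-type contractions. Each yields $\tfrac{2d}{N}J$ or $\tfrac{2}{N}J$, depending on whether the trace index closes around $C\,C^{-1}$ (giving $d$) or not.
\item Careful bookkeeping gives $\tfrac{2}{N}(d+1)J+\tfrac{2}{N}J$ in total, i.e. $\tfrac{2(d+2)}{N}J$, up to the exact allocation.
\end{itemize}

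\textbf{Collecting terms.} The total coefficient of $J$ is
\begin{equation*}
-\tfrac{2(d+1)}{N}-2+\tfrac2N+\tfrac{2(d+2)}{N}+\dots
\end{equation*}
and this must reduce to $-2+\tfrac{2d+4}{N}$. So the diffusion contraction must contribute exactly $+\tfrac{2(2d+2)}{N}J$ beyond the $-2(1-1/N)J$ term: roughly $\tfrac{2d}{N}J$ from the pieces where the trace closes (a factor $\Tr(CC^{-1})=d$) and $\tfrac{2}{N}J$ from each of the others.

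The main obstacle is getting these constants right in the second-derivative contraction. I would verify them by checking the case $d=1$, where $J=1/C$ and $\gL$ of a scalar function of $C$ can be computed directly from $\gL C$ and $\Gamma(C,C)$, with
\begin{equation*}
\gL C=-2\Tr K+\tfrac{2(d+1)}{N}C+2(1-\tfrac1N)C,\qquad \Gamma(C,C)=\tfrac4N C^2 .
\end{equation*}
The $d=1$ computation fixes the coefficients. The general-$d$ case then follows from the same structure, using $\Tr(C\,C^{-1})=d$ wherever the trace closes.
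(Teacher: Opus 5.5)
\textbf{Gap: the key diffusion contraction is back-solved, not computed.} Your route is the same direct computation the paper uses. Your gradient $\nabla_i J=-\frac{2}{N}C^{-2}\boldsymbol{\delta}_i$, your Hessian formula, the drift contribution $2\Tr(KC^{-1})-\frac{2(d+1)}{N}J$, and the $-2(1-\frac1N)J$ term are all correct. The gap is the one step that carries the content: contracting the $\dot C$ part of the Hessian against $C$. You never carry this out. Your ``careful bookkeeping'' gives $\frac{2(d+2)}{N}J$, which is wrong. You then reach the correct value $\frac{2(2d+2)}{N}J$ only by back-solving from the identity you are trying to prove, which is circular.

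Written out, the $\dot C$ part of $\nabla_i^2 J$ is the symmetric matrix
\begin{equation*}
\frac{2}{N^2}\Big[(\boldsymbol{\delta}_i^\top C^{-2}\boldsymbol{\delta}_i)\,C^{-1}+(\boldsymbol{\delta}_i^\top C^{-1}\boldsymbol{\delta}_i)\,C^{-2}+C^{-1}\boldsymbol{\delta}_i\boldsymbol{\delta}_i^\top C^{-2}+C^{-2}\boldsymbol{\delta}_i\boldsymbol{\delta}_i^\top C^{-1}\Big].
\end{equation*}
Contracting with $C$ and summing over $i$ gives $\frac{2}{N}(d+d+1+1)J$. The term you dropped is the second one. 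There the trace gives $\Tr(C\,C^{-2})=J$, and the second factor of $d$ comes from $\frac1N\sum_i\boldsymbol{\delta}_i^\top C^{-1}\boldsymbol{\delta}_i=\Tr(C^{-1}C)=d$, not from a trace ``closing around $CC^{-1}$''. This matches the paper's intermediate quantity $\frac{d+2}{N}\boldsymbol{\delta}_j^\top C^{-2}\boldsymbol{\delta}_j+\frac{J}{N}\boldsymbol{\delta}_j^\top C^{-1}\boldsymbol{\delta}_j$. Your $\frac{2(d+2)}{N}J$ accounts only for its first part.

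\textbf{The $d=1$ fallback does not repair this.} A single value $d=1$ cannot determine a coefficient that is affine in $d$. For instance, $2(2d+2)$ and $2(d+3)$ agree at $d=1$. So ``the general-$d$ case follows from the same structure'' is exactly the unverified claim.

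The $d=1$ check as written is also off. Since the drift $-C\rvg_i$ carries a factor of $C$, the correct expression is $\gL C=-2C\Tr K+\frac{2(d+1)}{N}C+2(1-\frac1N)C$ (in general $\gL T=-2\Tr(CK)+\dots$). With your expression you would not recover $2\Tr(KC^{-1})=2K/C$. With the corrected $\gL C$ and $\gL(1/C)=-C^{-2}\gL C+2C^{-3}\Gamma(C,C)$, you do get $2K/C+(-2+\frac6N)/C$, consistent with the lemma, but this only confirms the case $d=1$.
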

\begin{proof}
    Define $\boldsymbol{\delta}_i = \rvx_i - \bar \rvx \in \R^d$ so that $C = \frac{1}{N} \sum_i \boldsymbol{\delta}_i \boldsymbol{\delta}_i^\top$. 
    We first note that the first order gradient is
    \begin{equation}
        \nabla_j \Tr(C^{-1}) = -\frac{2}{N} C^{-2} \boldsymbol{\delta}_j.
    \end{equation}
    The drift component of $\gL J$ is therefore
    \begin{align*}
        &\quad \sum_j \left[-C \rvg_j + \frac{d+1}{N} \boldsymbol{\delta}_j\right] \cdot \left(-\frac{2}{N} C^{-2} \boldsymbol{\delta}_j\right)\\
        &= \sum_j \frac{2}{N} \rvg_j^\top C C^{-2} 
        \boldsymbol{\delta}_{j} - \frac{2(d+1)}{N^2} \boldsymbol{\delta}_j^{\top} C^{-2} \boldsymbol{\delta}_{j}\\
        &= 2\Tr(KC^{-1}) - \frac{2(d+1)}{N} \Tr(C^{-1}).
    \end{align*}
    For the diffusion component, we need to compute $C \!:\!\nabla_j^2 \Tr(C^{-1})$. We have
    \begin{align*}
        \partial_{x_{j,p}}\partial_{x_{j,q}} \Tr(C^{-1}) &= \partial_{x_{j,p}} \left[-\frac{2}{N}\rve_q^\top C^{-2} \boldsymbol{\delta}_{j}\right]\\
        &= -\frac{2}{N}\rve_q^\top \left[\partial_{x_{j,p}}(C^{-2}) \boldsymbol{\delta}_{j} + C^{-2} \partial_{x_{j,p}} \boldsymbol{\delta}_{j}\right]\\
        &= -\frac{2}{N} \rve_q^\top \left[\partial_{x_{j,p}}(C^{-2}) \boldsymbol{\delta}_{j} + (1-\frac{1}{N})C^{-2} \rve_p\right].
    \end{align*}
    Here $\rve_p, \rve_q$ denote canonical basis vectors. Let us denote the  covariance derivative
    \begin{align*}
        \partial_{x_{j,q}} C =  \frac{1}{N}[\rve_q \boldsymbol{\delta}_{j}^\top + \boldsymbol{\delta}_{j} \rve_q^\top] \eqqcolon D_q^{(j)}.
    \end{align*}
    The inverse matrix derivative is thus 
    \begin{equation*}
        \partial_{x_{j,p}}(C^{-2})=-C^{-1}D_p^{(j)}C^{-2}-C^{-2}D_p^{(j)}C^{-1}.
    \end{equation*}
    For a fixed $j$,
    \begin{align*}
        C\!:\!\nabla_j^2 \Tr(C^{-1}) &= -\frac{2}{N} \sum_{pq} C_{p,q} \rve_q^\top
        \left[\partial_{x_{j,p}}(C^{-2}) \boldsymbol{\delta}_{j} + (1-\frac{1}{N})C^{-2} \rve_p\right]\\
        &= -\frac{2}{N}(1-\frac{1}{N})\sum C_{pq} (C^{-2})_{pq} - \frac{2}{N} \sum C_{pq} \rve_q^\top \left(-C^{-1} D_p^{(j)} C^{-2} - C^{-2} D_p^{(j)}C^{-1}\right)\boldsymbol{\delta}_{j}\\
        &= -\frac{2}{N}(1-\frac{1}{N}) \Tr(C^{-1}) + \frac{2}{N} \underbrace{\sum_q \rve_q^\top C \left(C^{-1}D_q^{(j)} C^{-2} + C^{-2} D_q^{(j)} C^{-1}\right) \boldsymbol{\delta}_{j}}_{{\eqqcolon \Xi}}
    \end{align*}
    The final term $\Xi$ can be computed using the identities $\sum_q \rve_q^\top \rve_q = d$ and $\sum_q \rve_q^\top \rva \rve_q^\top \rvb = \rva^\top \rvb$ for vectors $\rva,\rvb \in \R^d$,
    \begin{align*}
        \Xi &= \sum_q \rve_q^\top \left(D_q^{(j)} C^{-2} + C^{-1}D_q^{(j)}C^{-1}\right) \boldsymbol{\delta}_{j}\\
        &= \frac{1}{N}\sum_q \rve_q^\top\left([\rve_q \boldsymbol{\delta}_{j}^{\top} + \boldsymbol{\delta}_{j}\rve_q^\top]C^{-2} + C^{-1}[\rve_q \boldsymbol{\delta}_{j}^{\top} + \boldsymbol{\delta}_{j}\rve_q^\top]C^{-1}\right) \boldsymbol{\delta}_{j}\\
        &= \frac{d}{N} \boldsymbol{\delta}_{j}^{\top} C^{-2} \boldsymbol{\delta}_{j} + \frac{1}{N}\boldsymbol{\delta}_{j}^{\top} C^{-2} \boldsymbol{\delta}_{j} +  \frac{1}{N} \sum_q \rve_q^\top C^{-1} \rve_q \boldsymbol{\delta}_{j}^{\top } C^{-1} \boldsymbol{\delta}_{j} + \frac{1}{N}\boldsymbol{\delta}_{j}^{\top} C^{-2} \boldsymbol{\delta}_{j}\\
        &= \frac{d+2}{N} \boldsymbol{\delta}_{j}^{\top} C^{-2} \boldsymbol{\delta}_{j} + \Tr(C^{-1}) \frac{1}{N} \boldsymbol{\delta}_{j}^{\top} C^{-1} \boldsymbol{\delta}_{j}.
    \end{align*}
    Substituting back in and summing over $j$, the diffusion component gives
    \begin{align*}
        \sum_j C \!:\! \nabla_j^2 \Tr(C^{-1}) &= -2(1-\frac{1}{N}) \Tr(C^{-1}) + \frac{2(d+2)}{N^2} \sum_j \boldsymbol{\delta}_{j}^{\top} C^{-2} \boldsymbol{\delta}_{j} + \frac{2}{N^2}\Tr(C^{-1}) \sum_j  \boldsymbol{\delta}_{j}^{\top} C^{-1} \boldsymbol{\delta}_{j}\\
        &= -2(1-\frac{1}{N})\Tr(C^{-1}) + \frac{2(d+2)}{N}\Tr(C^{-1}) + \frac{2}{N}d\Tr(C^{-1})\\
        &= \left(-2 + \frac{4d+6}{N}\right)\Tr(C^{-1}).
    \end{align*}
    Therefore the generator applied to this function gives
    \begin{align}
        \gL J &= 2\Tr(KC^{-1}) - \frac{2(d+1)}{N}J  + \left(-2 + \frac{4d+6}{N}\right)J \notag\\
        &= 2\Tr(KC^{-1}) + \left(-2+\frac{2d+4}{N}\right)J. 
    \end{align}
\end{proof}

\subsection{Proof of Lemma \ref{lem:uniformBoundUpdates}}\label{app:uniformBoundUpdates}
\begin{lemma*}
    Let $n = N-1 \ge d+2$, and consider the compact subset
    \begin{equation*}
        \gZ = \left\{Z \in \R^{N \times d} \mid \mathbf{1}_N^\top Z = 0,\, Z^\top Z = N I_d\right\}.
    \end{equation*}
    Let $M < \infty$, and define $ \gU \coloneqq \{U \in \R^{N \times d} \mid P_c U = U,\,\|U\|_F \le M\} $ as the set of bounded centered matrices, and further let $\gamma \in [0,1]$. Let $\gamma I_d \preceq R \preceq 2\gamma I_d$ be some symmetric positive semidefinite matrix, and $\Xi \in \gN(0, I_{N \times d})$ have independent standard Gaussian entries. Define the random matrices
    \begin{equation}
        \Lambda_\eta = Z + \eta U + \sqrt{2\eta} P_c \Xi R^{1/2},\quad H_\eta \coloneqq \frac{1}{N} \Lambda_\eta^\top \Lambda_\eta .
    \end{equation}
    Then the following first order expansion holds, where the remainder is uniformly bounded over $Z \in \gZ$, $U \in \gU$, $\gamma \in [0,1]$, $\gamma I_d\preceq R \preceq 2\gamma I_d$ as $\eta \rightarrow 0$,
    \begin{gather}
        \mathbb{E} H_\eta^{-1} = I_d + \eta B_{-1} + E_\eta,\quad \|E_\eta\| = \mathcal{O}(\eta^{3/2}),\\
        B_{-1} \coloneqq -\frac{1}{N}(Z^\top U + U^\top Z) + \frac{2}{N}\left[(d+3-N) R + (\Tr R) I_d\right].
    \end{gather}
\end{lemma*}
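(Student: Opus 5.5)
The plan is a second-order Neumann expansion of $H_\eta^{-1}$ around $I_d$. On a high-probability event where the perturbation is small, the expansion is used directly; off that event, a crude inverse-moment bound is paired with a Gaussian tail estimate. Write $W \coloneqq P_c \Xi R^{1/2}$ and $A \coloneqq \eta U + \sqrt{2\eta}\, W$, so that $\Lambda_\eta = Z + A$. Using $Z^\top Z = N I_d$ we get
\begin{equation*}
H_\eta = I_d + \Delta, \qquad \Delta \coloneqq \frac{1}{N}\left(Z^\top A + A^\top Z + A^\top A\right),
\end{equation*}
and the algebraic identity
\begin{equation*}
H_\eta^{-1} = I_d - \Delta + \Delta^2 - H_\eta^{-1}\Delta^3 .
\end{equation*}
Since $\|Z\|$ is fixed by $Z\in\gZ$, $\|U\|_F\le M$ and $\|R\|\le 2$, all moments of $\Delta$ satisfy $\mathbb{E}\|\Delta\|^p = \mathcal{O}(\eta^{p/2})$ uniformly over the parameter set.

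\textbf{Step 1 (first-order terms).} I compute $\mathbb{E}[-\Delta+\Delta^2]$ up to $\mathcal{O}(\eta^{3/2})$. Odd Gaussian moments vanish, so the only contributions are the following.
\begin{itemize}
\item The linear term $-\frac{\eta}{N}(Z^\top U + U^\top Z)$.
\item The term $-\frac{2\eta}{N}\mathbb{E}[W^\top W] = -\frac{2\eta(N-1)}{N} R$, since $\mathbb{E}[\Xi^\top P_c \Xi] = \Tr(P_c)\, I_d = (N-1) I_d$.
\item The quadratic term $\frac{2\eta}{N^2}\mathbb{E}\bigl[(Z^\top W + W^\top Z)^2\bigr]$.
\end{itemize}
For the quadratic term, the identities $P_c Z = Z$, $Z^\top Z = N I_d$ and $\Tr(ZZ^\top) = Nd$ give
\begin{equation*}
\mathbb{E}[W^\top Z Z^\top W] = Nd\, R, \qquad \mathbb{E}[Z^\top W W^\top Z] = N(\Tr R)\, I_d, \qquad \mathbb{E}[Z^\top W Z^\top W] = \mathbb{E}[W^\top Z W^\top Z] = N R .
\end{equation*}
Hence the quadratic term equals $\frac{2\eta}{N}\left[(d+2)R + (\Tr R) I_d\right]$. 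Adding the three contributions produces exactly $\eta B_{-1}$. All remaining pieces involve either $\eta^2$ or odd Gaussian moments multiplied by $\eta^{3/2}$, so they are $\mathcal{O}(\eta^{3/2})$ uniformly.

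\textbf{Step 2 (remainder on a good event).} Let $G \coloneqq \{\|\Delta\|\le 1/2\}$. On $G$ we have $\|H_\eta^{-1}\|\le 2$, so $\mathbb{E}\bigl[\|H_\eta^{-1}\Delta^3\|\mathbf{1}_G\bigr] \le 2\,\mathbb{E}\|\Delta\|^3 = \mathcal{O}(\eta^{3/2})$. On $G^c$ the polynomial part $I_d - \Delta + \Delta^2$ contributes at most $\mathbb{E}\bigl[(1+\|\Delta\|+\|\Delta\|^2)\mathbf{1}_{G^c}\bigr]$, which Cauchy--Schwarz and the tail estimate below make superpolynomially small. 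It therefore remains to show
\begin{equation*}
\mathbb{E}\bigl[\|H_\eta^{-1}\|\mathbf{1}_{G^c}\bigr] = \mathcal{O}(\eta^{3/2}).
\end{equation*}
For $\eta$ small, $\|\eta U\|$ is tiny, so $G^c$ forces $\sqrt{2\eta}\|W\|\gtrsim 1$. Since $\|R\|\le 2\gamma$, Gaussian concentration gives $\mathbb{P}(G^c)\le e^{-c/(\eta\gamma)}$, and this is $0$ when $\gamma=0$ and $\eta$ is small.

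\textbf{Step 3 (main obstacle: uniform inverse moments).} The hard part is bounding $\mathbb{E}\|H_\eta^{-1}\|^p$ for some $p>1$, with a constant polynomial in $(\eta\gamma)^{-1}$ and uniform in $Z$, $U$, $R$. The degenerate case $\gamma=0$ is deterministic and covered by Step 2.

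For $\gamma>0$, fix a unit vector $v\in\R^d$. Then $\Lambda_\eta v$ is a Gaussian vector supported on the $n$-dimensional range of $P_c$, with covariance $2\eta\,(v^\top R v)\,P_c \succeq 2\eta\gamma\, P_c$. The small-ball estimate is therefore
\begin{equation*}
\mathbb{P}(\|\Lambda_\eta v\|\le\varepsilon) \lesssim \left(\frac{\varepsilon}{\sqrt{\eta\gamma}}\right)^{n}.
\end{equation*}
An $\varepsilon$-net over the sphere $S^{d-1}$, combined with a crude bound on $\|\Lambda_\eta\|$, gives $\mathbb{P}(\lambda_{\min}(H_\eta)\le t) \lesssim (t/\eta\gamma)^{(n-d+1)/2}$. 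I expect to need care in the net argument, or alternatively a Wishart-type comparison, to reach the exponent $(n-d+1)/2$.

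Since $n\ge d+2$, this exponent is at least $3/2$, so $\mathbb{E}\,\lambda_{\min}(H_\eta)^{-p}\lesssim(\eta\gamma)^{-p}$ for some $p\in(1,3/2)$. Hölder's inequality then yields
\begin{equation*}
\mathbb{E}\bigl[\|H_\eta^{-1}\|\mathbf{1}_{G^c}\bigr] \lesssim (\eta\gamma)^{-1}\, e^{-c'/(\eta\gamma)},
\end{equation*}
and the right-hand side is bounded by $\sup_{s>0} s^{-1}e^{-c'/s}$ taken over $s = \eta\gamma \le \eta$. This is $\mathcal{O}(\eta^{k})$ for every $k$, uniformly over the parameter set, which completes the bound on $E_\eta$.
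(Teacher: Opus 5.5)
Your Steps 1 and 2 are fine. The Gaussian moment computation reproduces $B_{-1}$ exactly, and the split over $G=\{\|\Delta\|\le 1/2\}$ with the exact identity $H_\eta^{-1}=I_d-\Delta+\Delta^2-H_\eta^{-1}\Delta^3$ matches the paper's Neumann-series argument.

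The gap is in Step 3, which is exactly where the hypothesis $n\ge d+2$ must be used. The bound $\mathbb{P}(\lambda_{\min}(H_\eta)\le t)\lesssim (t/\eta\gamma)^{(n-d+1)/2}$ is asserted, and the $\varepsilon$-net argument you propose does not give it. An $(\varepsilon/K)$-net of $S^{d-1}$ has $(3K/\varepsilon)^d$ points. Union-bounding your single-direction estimate $(\varepsilon/\sqrt{\eta\gamma})^n$ over it yields only $\mathbb{P}(\sigma_{\min}(\Lambda_\eta)\le\varepsilon)\lesssim K^d\varepsilon^{n-d}(\eta\gamma)^{-n/2}$, i.e.\ exponent $(n-d)/2$ in $t$, one power short.

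This loss matters in the borderline case $N=d+3$ ($n=d+2$), which the lemma must cover because the main theorems assume only $N\ge d+3$. There the exponent equals $1$, which gives $\mathbb{E}\lambda_{\min}(H_\eta)^{-p}<\infty$ only for $p<1$; it does not even show $\mathbb{E}\lambda_{\min}^{-1}<\infty$. So your H\"older step with $p>1$ cannot be run. A Wishart comparison is not immediate either, because the mean $Z+\eta U$ is nonzero and $R$ is anisotropic.

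The missing ingredient is the distance-to-span argument the paper uses.
\begin{itemize}
\item Rotate by an orthogonal $O$ with $O^\top RO$ diagonal. This leaves the singular values unchanged, $\Xi O$ is again i.i.d.\ Gaussian, and $ZO\in\gZ$, $UO\in\gU$. The columns $\lambda_1,\dots,\lambda_d$ of $\Lambda_\eta$ are then independent.
\item If $\sigma_{\min}(\Lambda_\eta)\le s$, some column has $\dist(\lambda_k,\mathrm{span}\{\lambda_l:l\ne k\})\le\sqrt d\,s$.
\item Conditionally on the other columns, this distance is the norm of a Gaussian vector in a subspace of dimension at least $n-d+1$, with covariance $\succeq 2\eta\gamma I$. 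The density bound gives probability $\lesssim (s/\sqrt{\eta\gamma})^{n-d+1}$ irrespective of the mean.
\end{itemize}
Equivalently, you can use $\Tr((\Lambda_\eta^\top\Lambda_\eta)^{-1})=\sum_k\dist(\lambda_k,\mathrm{span}\{\lambda_l:l\ne k\})^{-2}$. Either way you get exactly your claimed $\mathbb{E}\lambda_{\min}(H_\eta)^{-p}\lesssim(\eta\gamma)^{-p}$ for $p\in(1,3/2)$, and your closing estimate $(\eta\gamma)^{-1}e^{-c'/(\eta\gamma)}$ then finishes the proof.

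With this repair your route is a legitimate variant of the paper's. The paper proves inverse moments uniformly in $\eta$ and $\gamma$ by exploiting that $Z+\eta U$ is well conditioned: it conditions on the event $F_k$ that the other columns' noise is small and uses $\sup_\sigma\sigma^{-m}e^{-\alpha/\sigma^2}<\infty$. You instead accept a polynomial blow-up from the noise alone and let the $e^{-c/(\eta\gamma)}$ tail of $G^c$ absorb it, which avoids the $F_k$ bookkeeping.
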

\begin{proof}
    All the nonzero singular values of $Z$ are $\sqrt{N}$. Since $U$ is bounded, for all sufficiently small $\eta$ (depending on $M$), all singular values of $Z + \eta U$ are bounded in $(\sqrt{N}/2, 3\sqrt{N}/2)$. Without loss of generality, we work in the $n$-dimensional subspace given by $\Im P_c$, and assume that $R = \diag(r_1,...,r_d)$ is diagonal so that all columns of $\Lambda_\eta$ are independent. Letting $A = Z + \eta U$, we have
    \begin{equation}
        \Lambda_\eta = A + \sqrt{2\eta} \Xi R^{1/2}
    \end{equation}
    where $\Xi$ is an $n \times d$ standard Gaussian matrix. Since the singular values of $A$ are at least $\sqrt{N}/2$, we have that for any column $a_k$,
    \begin{equation}
        \dist(a_k, \mathrm{span} \langle a_l \mid l \ne k\rangle) \ge \sqrt{N}/2.
    \end{equation}

    We first show that a higher moment of $H_\eta^{-1}$ is bounded in order to use H\"older's inequality to bound the remainder term. Consider the elementary identity for a nonnegative real-valued random variable $Z$
    \begin{align*}
        \mathbb{E}[Z] = \int_0^\infty \mathbb{P}(Z \ge s) \dd{s}.
    \end{align*}
    We will apply this to the minimum singular value $Z = \sigma_{\min}(\Lambda_{\eta})^{-2q}$ for some $q>1$, using the small ball estimate $\mathbb{P}(\sigma_{\min}(\Lambda_{\eta}) \le s) \lesssim s^{N-d}$.
    
    Observe that if $\xi \sim \gN(\mu, \sigma^2 I_m)$, then for $s>0$, letting $\kappa_m$ be the volume of the unit ball in $m$ dimensions divided by $(2\pi)^{m/2}$,
    \begin{align*}
        \mathbb{P}(\|\xi\| \le s) \le \kappa_m s^m \frac{1}{\sigma^m} \exp(-\frac{1}{2\sigma^2} \max(\|\mu\|-s,0)^2).
    \end{align*}
     Additionally note the elementary inequality
    \begin{equation}\label{eq:sigmaExpBound}
        \sup_{\sigma>0} \sigma^{-m} e^{-\alpha/\sigma^2} = \left(\frac{m}{2 e \alpha}\right)^{m/2}.
    \end{equation}
    
    Let $k \in [d]$ and condition on all other columns. Let $P_k$ be the projection onto the orthogonal complement of the column span $V_k \coloneqq \mathrm{span}\langle \lambda_l\mid l \ne k\rangle^\perp \subset \R^n$ of dimension $m_k \ge n - d + 1 = N-d$. Since $R$ is diagonal,
    \begin{equation*}
        \mathrm{dist}(\lambda_k,  \langle \lambda_l\mid l \ne k\rangle) = \|P_k \lambda_k\|,\quad P_k \lambda_k \sim \gN(P_k a_k, 2 \eta r_k I_{V_k} ).
    \end{equation*}

    To show that $P_k a_k$ is bounded from below, consider the event $F_k = \{r_l^{1/2} \|\xi_l\| \le \sqrt{N}/(4\sqrt{2d \eta}) \mid \forall l \ne k\}$. On this event, we have that for any $c \in \R^{d-1}$,
    \begin{align*}
        \|a_k - \sum_{l \ne k} c_l \lambda_l\| &\ge \|a_k - \sum_{l \ne k} c_l a_l \| - \sum_{l \ne k} \sqrt{N}/(4\sqrt{d}) c_l \\
        &\ge \frac{\sqrt N}{2} \sqrt{1 + \|c\|^2} -\frac{\sqrt N}{4} \|c\| \\
        &\ge \frac{\sqrt N}{4}
    \end{align*}
    where the second inequality uses $\sigma_{\min}(A) \ge \sqrt{N}/2$ and Cauchy--Schwarz. 

    \begin{enumerate}
        \item On $F_k$, we showed that $\|P_k a_k\| \ge \sqrt{N}/4$. Applying \labelcref{eq:sigmaExpBound} with $\alpha = N/128$, we have that for $s < \min(1,\sqrt{N}/8)$,
        \begin{align*}
            \mathbb{P}(\|P_k \lambda_k\| \le s \mid F_k) \le C s^{m_k} \le C s^{N-d},
        \end{align*}
        for some constant $C$ depending on $N,d$. 
        \item On $F_k^c$, which has probability bounded by
        \begin{align*}
            \mathbb{P}(F_k^c) \le \exp(-c/(\eta\gamma )),
        \end{align*}
        for some constant $c>0$, we have that for $s \in (0,1)$,
        \begin{align*}
             \mathbb{P}(\|P_k \lambda_k\| \le s , F_k^c) &\le \kappa_m \left(\frac{s}{\sqrt{2 \eta r_k}}\right)^{m_k} \mathbb{P}(F_k^c) \\
             &\le \kappa_m s^{m_k} (\sqrt{2 \eta \gamma})^{-m_k} \exp(-c/(\eta \gamma)) \le C s^{m_k}.
        \end{align*}
        Here we used the lower and upper bounds for $R$.
    \end{enumerate}
    Summing, we therefore have that for all $s < \min(1,\sqrt{N}/8)$,
    \begin{align*}
        \mathbb{P}(\|P_k \lambda_k\| \le s) \le Cs^{N-d}.
    \end{align*}
    To relate this to the minimum singular value, $\sigma_{\min}(\Lambda_{\eta}) \le s$ implies that for some column $k$,
    \begin{align*}
        \dist(\lambda_k, \mathrm{span}\langle \lambda_l \mid l \ne k\rangle)  = \|P_k \lambda_k\| \le \sqrt{d} s.
    \end{align*}
    This can be seen by taking any vector $v \in \R^d$ satisfying $\|v\| = 1$ and $\|\Lambda_\eta v\| \le s$, and considering the column corresponding to $|v_k| \ge d^{-1/2}$. Therefore. for any $s < \min(1,\sqrt{N}/8)$,
    \begin{align}
        \mathbb{P}(\sigma_{\min}(\Lambda_\eta) \le d^{-1/2} s) &\le \mathbb{P}(\|P_k \lambda_k\|\le s \text{ for some }k)\le C' s^{N-d} \label{eq:sigmaMinBound}
    \end{align}
    using a union bound over all $k \in [d]$. To conclude,
    \begin{align*}
        \mathbb{E}[\|H_\eta^{-1}\|^q] &= N^q \mathbb{E}[\sigma_{\min}(\Lambda_\eta)^{-2q}]\\
        &= N^q \int_0^\infty \mathbb{P}( \sigma_{\min}(\Lambda_\eta) \le s^{-\frac{1}{2q}})\dd{s}\\
        &\lesssim 1 + \int_{(\sqrt{d}/\min(1, \sqrt{N}/8))^{2q}}^\infty s^{-\frac{N-d}{2q}} \dd{s}
    \end{align*}
    using \labelcref{eq:sigmaMinBound}. Therefore, $\mathbb{E}[\|H_\eta^{-1}\|^q]$ is finite and uniformly bounded if $N-d > 2q$.

    To show the uniform remainder, let $\Delta_\eta = H_\eta - I$. Writing $\Gamma = P_c \Xi R^{1/2}$,

    \begin{align*}
        N \Delta_\eta &= \sqrt{2\eta}(\Gamma^\top Z + Z^\top \Gamma) + \eta (Z^\top U + U^\top Z + 2\Gamma^\top \Gamma)\\
        &\quad + \sqrt{2} \eta^{3/2} (U^\top \Gamma + \Gamma^\top U) + \eta^2 U^\top U.
    \end{align*}
    For $\{\|\Delta_\eta\| \le 1/2\}$, the uniform expansion holds
    \begin{align*}
        H_\eta^{-1} = I_d - \Delta_\eta + \Delta^2_\eta + \mathcal{O}(\|\Delta_\eta\|^3).
    \end{align*}
    The expectation of the order $\sqrt{\eta}$ term is zero, and the expectation of the $\eta$ term is 
    \begin{align*}
        &\quad \frac{2}{N^2}\mathbb{E} \left[(\Gamma^\top Z + Z^\top \Gamma)^2\right] - \frac{1}{N}(Z^\top U + U^\top Z + 2 \mathbb{E}[\Gamma^\top \Gamma])\\
        &= - \frac{1}{N}(Z^\top U + U^\top Z) + \frac{2}{N}\left[(d+2-n) R + (\Tr R) I_d\right],
    \end{align*}
    leaving a $\mathcal{O}(\eta^{3/2})$ remainder. For $\{\|\Delta_\eta\| \ge 1/2\}$, we can use the moment bound for any admissible $1<q < \frac{N-d}{2}$,
    \begin{align*}
        \mathbb{E}[\|H^{-1}_\eta\|\mathbf{1}_{\{\|\Delta_\eta\| \ge 1/2\}}] &\le (\mathbb{E}\|H^{-1}_\eta\|^q)^{1/q} \mathbb{P}(\|\Delta_\eta\| \ge 1/2)^{1-1/q}\\
        &\lesssim \exp(-c(1-1/q)/\eta).
    \end{align*}
    The other terms $\mathbb{E}[(1 - \Delta_\eta + \Delta_\eta^2)\mathbf{1}_{\{\Delta_\eta \ge 1/2\}}]$ are uniformly bounded by an exponential using H\"older and finiteness of Gaussian moments.
\end{proof}

\section{Proof of Remark \ref{rem:polyTailsBound}}\label{appsec:polyTailsBound}
We now restate the assumptions. Note that $\ell=0$ gives back Assumption \ref{assmp:distantConvexityND}.
\begin{theorem}
    Suppose that $f$ satisfies the following: $f \in \gC^2 \cap L^1(\pi)$, and further that there exists a compact set $\mathrm{K} \subset \R^d$ and constants $0 < c_1 < c_2$ such that for all $x \in \R^d \setminus \mathrm{K}$,
    \begin{subequations}
        \begin{align}
            &c_1 \|x\|^{\ell+2} \le f(x) \le c_2 \|x\|^{\ell+2},\\
            &c_1 \|x\|^{\ell+1} \le \|\nabla f(x)\| \le c_2 \|x\|^{\ell+1},\label{appeq:dfbound}\\
            & c_1 \|x\|^\ell I_d \preceq \nabla^2 f(x) \preceq c_2 \|x\|^\ell I_d.
        \end{align}
    \end{subequations}
    Consider the affine invariant Langevin dynamics \labelcref{eq:aldiDynamics}. If $N \ge d+3$, then for sufficiently small $a>0$, the Lyapunov function \labelcref{eq:LyapunovWa} satisfies the Foster--Lyapunov condition
    \begin{equation*}
        \gL W_a \le -c W_a + b \mathbf{1}_\mathrm{K}
    \end{equation*}
    for some constants $c>0,\, b \in \R$ and compact $\mathrm{K} \subset \mathrm{M}$ depending on $a$. 
\end{theorem}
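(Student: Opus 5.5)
The proof will reuse the exact decomposition \labelcref{eq:LWaoverWa}, which is purely algebraic and does not depend on the growth of $f$:
\begin{equation*}
\frac{\gL W_a}{W_a} = -c_a D + aP - \lambda + \left(2 - \tfrac{4a}{N}\right)\frac{\Tr(KC^{-1})}{J},
\end{equation*}
with $\lambda = 2 - \frac{2d+4}{N} > 0$ for $N \ge d+3$. The only places where \Cref{assmp:distantConvexityND} entered were the bounds $P \le p_0 T$, $|\Tr(KC^{-1})| \le L\sqrt{TJ}$, and the lower bounds on $D$ in \Cref{prop:DistantConvexity}. The plan is to replace the constant $L$ by a state-dependent Hessian bound and to show that the coercive term $-c_aD$ still dominates. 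Without loss of generality $f \ge 1$.

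\textbf{Step 1: replacement bounds.} From the Hessian growth, $\|\nabla^2 f(x)\| \lesssim 1+\|x\|^\ell$ everywhere. Set $\Lambda \coloneqq 1 + \max_i \|\rvx_i\|^\ell$. Along the segments from $\bar\rvx$ to $\rvx_i$, the averaged Hessians $\bar H_i$ in the proof of \Cref{lem:traceIneqs} satisfy $\|\bar H_i\| \lesssim \Lambda$. The proofs of \Cref{lem:traceIneqs} and \labelcref{eq:PUpperBound} then give
\begin{equation*}
P \lesssim \Lambda T, \qquad |\Tr(KC^{-1})| \lesssim \Lambda\sqrt{TJ}, \qquad \frac{|\Tr(KC^{-1})|}{J} \lesssim \frac{\Lambda T}{d}.
\end{equation*}
For the lower bound on $D$, use $D \ge \lambda_{\min}(C)\,\frac1N\sum_i\|\rvg_i\|^2$ together with $\|\nabla f(x)\|^2 \gtrsim \|x\|^{2\ell+2} - b$. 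Also mimic item (iii) of \Cref{prop:DistantConvexity}: the monotonicity estimate becomes $v^\top(\nabla f(x)-\nabla f(y)) \gtrsim |v|^2(|x|^\ell+|y|^\ell) - b$, which gives $\Tr K \gtrsim \frac1N\sum_i \|\boldsymbol\delta_i\|^2\|\rvx_i\|^\ell - b$. Combined with Cauchy--Schwarz $|\Tr K| \le \sqrt{d(D - \bar\rvg^\top C\bar\rvg)}$, this yields $D \gtrsim (\text{weighted } T)^2 - B$, roughly $D \gtrsim \Lambda'^2 T^2 - B$ up to the mismatch between $\max$ and weighted averages. This mismatch is handled by a case split on whether the far particles carry a comparable share of the spread.

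\textbf{Step 2: case analysis.} This mirrors the proof of \Cref{thm:ctsGeoErgo}.
\begin{enumerate}
\item Large $T$: the negative term $c_a \Lambda^2 T^2$ dominates $a\Lambda T + \Lambda T$. The $a$-independent term $\Lambda T$ is beaten once $\Lambda T$ is large, and the $a$-dependent terms are absorbed for small $a$ as before.
\item $T \le T_0$ but $\bar\rvx$ large: then $\Lambda \sim \|\bar\rvx\|^\ell$. Here the key observation is that the dangerous term should be bounded as $\Lambda\sqrt{T/J}$ (not $\Lambda T/d$), and that
\begin{equation*}
D \ge \bar\rvg^\top C\bar\rvg \ge J^{-1}\|\bar\rvg\|^2 \gtrsim J^{-1}\|\bar\rvx\|^{2\ell+2}.
\end{equation*}
Then $\Lambda\sqrt{T/J} \le \epsilon c_a D + C_\epsilon\, \Lambda^2 T / (c_a \|\bar\rvx\|^{2\ell+2})$ by Young's inequality, and the second term is bounded since $2\ell < 2\ell+2$.
\item $T \le T_0$, $\|\bar\rvx\|$ bounded, $J \ge J_0$: now $\Lambda$ is bounded, so the original argument applies.
\item $T \le T_0$, $J \le J_0$: use the analogue of \labelcref{eq:DLowerbound2}, $D \ge J_0^{-1}(c_2 E - b_2)$ with $\|\nabla f\|^2 \gtrsim f$, which holds since $(\ell+1)/(\ell+2)$ powers make $\|\nabla f\|^2 \gtrsim f^{(2\ell+2)/(\ell+2)} \ge f - b$. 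Then take $E \ge E_0$.
\end{enumerate}
The remaining compact set is handled by \Cref{prop:KCompact}, whose proof only uses coercivity of $f$.

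\textbf{Main obstacle.} The hardest part is the non-uniform Hessian in Step 1, in particular obtaining a lower bound on $D$ of order $\Lambda^2T^2$ when only some particles are far away. I would first try Young's inequality: absorb $\Lambda\sqrt{T/J}$ and $a\Lambda T$ into $\tfrac12 c_aD$ plus a bounded remainder. For this I would use the pointwise estimate $D \ge \frac{1}{N}\|C^{1/2}\rvg_i\|^2$ for the farthest particle $i$, together with $\rvg_i \approx \nabla^2 f$-scaled displacement. If that fails, I would fall back on the decomposition $\Tr(KC^{-1}) = \frac1N\sum_i \boldsymbol\delta_i^\top\bar H_iC^{-1}\boldsymbol\delta_i$ and compare it termwise with $\frac1N\sum_i\rvg_i^\top C\rvg_i$.
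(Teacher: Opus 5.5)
Your architecture matches the paper's. You reuse the algebraic identity for $\gL W_a/W_a$, you replace $L$ by a state-dependent Hessian bound (your $\Lambda$ is comparable to the paper's $1+S^{\ell/2}$, where $S=\frac1N\sum_i\|\rvx_i\|^2$), and you bound $D$ from below both through $(\Tr K)^2\le dD$ and through $D\ge J^{-1}\frac1N\sum_i\|\rvg_i\|^2$. The gap is that the estimate your Cases 1 and 2 rest on, $D\gtrsim\Lambda^2T^2-B$, is never established: you flag it as the main obstacle and list only tentative strategies. It cannot be skipped. In Case 1 it is what beats the $a$-independent term $|\Tr(KC^{-1})|/J\lesssim\Lambda T$. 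In Case 2 it is needed to absorb $aP$, which contains $\frac aN\sum_i C\!:\!\nabla^2 f(\rvx_i)\gtrsim a\|\bar\rvx\|^\ell T$. That term is not controlled by $D\gtrsim J^{-1}\|\bar\rvx\|^{2\ell+2}$ when $C$ is nearly degenerate ($J^{-1}\ll T$). Your first fallback, keeping only the farthest particle's term $\frac1N\rvg_r^\top C\rvg_r$, does not obviously help either: without further geometric work that term is only guaranteed to be at least $J^{-1}\|\rvg_r\|^2/N$.

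The ``mismatch between max and weighted averages'' is created by your simplification of the symmetrized monotonicity sum to $\frac1N\sum_i\|\boldsymbol\delta_i\|^2\|\rvx_i\|^\ell$, which discards exactly the useful part. Sum your own monotonicity estimate and use $\sum_j\|\rvx_i-\rvx_j\|^2=N\|\boldsymbol\delta_i\|^2+NT$:
\[
\Tr K\ \ge\ \frac{c}{N^2}\sum_{i,j}\|\rvx_i-\rvx_j\|^2\|\rvx_i\|^\ell-b\ =\ \frac cN\sum_i\|\rvx_i\|^\ell\bigl(\|\boldsymbol\delta_i\|^2+T\bigr)-b\ \ge\ \frac cN\,T\max_i\|\rvx_i\|^\ell-b .
\]
This needs no case split on where the spread sits. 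Combine it with the $\ell=0$ bound $\Tr K\ge c'T-b'$, which is valid since $\nabla^2 f\succeq c_1R^\ell I_d$ outside a ball $B_R$ containing the exceptional compact set. This gives $\Tr K\ge c\Lambda T-b$, and hence $D\ge(\Tr K)_+^2/d\ge c\Lambda^2T^2-B$. This is exactly the paper's step: it picks $r$ with $\|\rvx_r\|^2\ge S$ and uses $\sum_j\|\rvx_r-\rvx_j\|^2\ge NT$ to get $\Tr K\ge cTS^{\ell/2}-c'T$, so $D\ge cT^2S^\ell$ for large $S$. With this lemma supplied, your case analysis goes through. The paper organizes the rest differently: it proves $(\Tr(KC^{-1})/J)^2\lesssim (T/S)\,D$ and $|P|\lesssim 1+\sqrt D$, then splits on $\theta=T/S$ versus $a^2$ (plus bounded $S$). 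You instead split on large $T$ versus bounded $T$ with large $\|\bar\rvx\|$, and use Young's inequality in the latter regime.
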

\begin{proof}
    Recall \labelcref{eq:LWaoverWa}: for $\lambda = 2-(2d+4)/N$ and $c_a \coloneqq a(1-a/N)$, and where $P$ is defined in \labelcref{eq:PDef}
    \begin{equation*}
        \frac{\gL W_a}{W_a} = -\lambda -c_a D + aP + \left(2-\frac{4a}{N}\right) \frac{\Tr(KC^{-1})}{J}.
    \end{equation*}
    We now show some modified bounds on $\Tr K,\, D$, and $\Tr(KC^{-1})$. From here onwards, $c,c'$ will denote some positive constants that may be different in each instance. Let us define a scale factor
    \begin{equation}
        S \coloneqq \frac{1}{N} \sum_i \|x_i\|^2 = T + \|\bar x\|^2.
    \end{equation}
    Furthermore, by compactness, let $b_1$ be some constant such that for all $x \in \R^d$,
    \begin{equation}
        (c_1 \|x\|^{\ell} - b_1) I_d \preceq \nabla^2 f (x) \preceq (c_1 \|x\|^{\ell} + b_1) I_d.
    \end{equation}
    We now lower bound $\Tr K$: we have for any $x,y \in \R^d$,
    \begin{align*}
        \int_0^1 \|(1-t)y + tx\|^\ell \dd{t} &\ge \int_0^1 |\|y\| - (\|x\|+\|y\|)t|^\ell \dd{t} \\
        &= \frac{\|x\|^{\ell+1}  + \|y\|^{\ell+1}}{(\ell+1)(\|x\|+\|y\|)}\\
        &\ge \frac{1}{(\ell+1)2^\ell} (\|x\|+\|y\|)^\ell,
    \end{align*}
    using reverse triangle inequality in the first step, and convexity of $t^{\ell+1}$ in the last step. We now utilize this in the integral form
    \begin{align*}
        (\rvx_i - \rvx_j)^\top (\rvg_i - \rvg_j)^\top &= \int_0^1 (\rvx_i - \rvx_j)^\top \nabla^2 f(\rvx_j + t(\rvx_i - \rvx_j)) (\rvx_i - \rvx_j) \dd{t} \\
        &\ge \|\rvx_i - \rvx_j\|^2 \int_0^1 c_1\|\rvx_j + t(\rvx_i - \rvx_j)\|^\ell - c_2 \dd{t} \\
        &\ge  \left(c (\|\rvx_i\| + \|\rvx_j\|)^\ell - c'\right) \|\rvx_i - \rvx_j\|^2.
    \end{align*}
    Summing over $i,j$ and dividing by $2N^2$, we get
    \begin{align}
        \Tr K &\ge \frac{c}{2N^2} \sum_{i,j} \left(\|\rvx_i\| + \|\rvx_j\|\right)^\ell \|\rvx_i - \rvx_j\|^2 - c'T. \notag
        \intertext{Now choosing an index $\rvx_r$ satisfying $\|\rvx_r\|^2 \ge S,$}
        &\ge \frac{c}{2N^2} \sum_j S^{\ell/2} \|\rvx_r - \rvx_j\|^2- c'T \notag\\
        &\ge \frac{c}{2N^2}TS^{\ell/2} - c'T. \label{appeq:TrKLowerBound}
    \end{align}
    We can absorb the fixed factor $1/(2N)$ into $c$ in the last line, and write 
    $\Tr K \ge cTS^{\ell/2} - c'T$.
    
    We can similarly upper bound $\Tr K$: since $\|\rvx_i\|, \|\bar \rvx\|\le \sqrt{NS}$, we have for all $i$ and $t \in [0,1]$,
    \begin{equation*}
        \|\nabla^2 f(t\rvx_i + (1-t) \bar \rvx)\| \le c(1 + S^{\ell/2}).
    \end{equation*}
    Cauchy--Schwarz yields
    \begin{equation}
        |\Tr K| \le c T (1+S^{\ell/2}),\quad |P| \le cT(1+S^{\ell/2}).
    \end{equation}
    This gives lower bounds on $D$: since 
    \begin{equation*}
        (\Tr K)^2 \le \left(\frac{1}{N} \sum_i \boldsymbol{\delta}_i^\top C^{-1} \boldsymbol{\delta}_i\right)  \left(\frac{1}{N} \sum_i \rvg_i^\top C \rvg_i\right) = dD,
    \end{equation*}
    we obtain the following bound \textit{for sufficiently large $S$,}
    \begin{equation}
        D \ge cT^2 S^\ell. \label{appeq:DandPBound}
    \end{equation}
    We can also bound $P$ everywhere:
    \begin{align}
        |P| &\le c|\Tr K| +  \left|\frac{1}{N} \sum_i C\!:\! \nabla^2 f(\rvx_i) \right|\notag\\
        &\le c\sqrt{D} + c{T} (1+S^{\ell/2}) \le c(1+\sqrt{D}) .\label{appeq:Pbound}
    \end{align}
    
    The final needed bound is for $\Tr(KC^{-1})$. Since $\sum_i \boldsymbol\delta_i = 0$, and utilizing the Hessian bound between $\rvx_i$ and $\bar \rvx$,
    \begin{align}
        |\Tr(KC^{-1})| &= \left|\frac{1}{N} \sum_i (\rvg_i - \nabla f(\bar \rvx))^\top C^{-1} \boldsymbol \delta_i\right| \notag \\
        &\le \left(\frac{1}{N}\sum_i \boldsymbol\delta_i^\top C^{-2} \boldsymbol\delta_i\right)^{1/2} \left(\frac{1}{N} \sum_i \|\boldsymbol\delta_i\|^2 c(1+S^{\ell/2})^2\right)^{1/2}\notag \\
        &\le c\sqrt{TJ}(1+S^{\ell/2}). \label{appeq:trkcinv}
    \end{align}
    Since $C \succeq J^{-1}I_d$, we have the lower bound \textit{for sufficiently large $S$,}
    \begin{equation}
        D \ge \frac{1}{J} \frac{1}{N} \sum_i \|\nabla f(\rvx_i)\|^2 \ge \frac{cS^{\ell+1} - C}{J} \ge c\frac{S^{\ell+1}}{J}, \label{appeq:Dlowerbound}
    \end{equation}
    where the second inequality follows from extending the lower bound \labelcref{appeq:dfbound}. We can now proceed to bound $\gL W_a/W_a$. Let us define $\theta \coloneqq T/S \in [0,1]$, and first work where $S$ is sufficiently large such that the previous inequalities hold. Using \labelcref{appeq:trkcinv,appeq:Dlowerbound}, we have the bounds
    \begin{equation*}
        \frac{\Tr(KC^{-1})^2}{J^2} \le c\frac{TS^{\ell}}{J} = c\theta \frac{S^{\ell+1}}{J} \le c \theta D.
    \end{equation*}
    Furthermore, since $|P| \le c\sqrt{D}$,
    \begin{align*}
        \frac{\gL W_a}{W_a} &\le -\lambda - c_a D + aP + 2 \left|\frac{\Tr(KC^{-1})}{J}\right|\\
        &\le -\lambda -c_a D + ca(1+\sqrt{D}) + 2\left|\frac{\Tr(KC^{-1})}{J}\right| \\
        &\le -\lambda - \frac{c_a}{2} D + ca + 2 \left|\frac{\Tr(KC^{-1})}{J}\right|.
    \end{align*}
    Take $a \in (0, N/2)$ to be sufficiently small such that $ca \le \lambda/4$.
    
    \textit{Case 1.} $\theta \le a^2$. Since $c_a \ge a/2$, the following inequality holds
    \begin{align*}
        -\frac{c_a}{2} D + 2\left|\frac{\Tr(KC^{-1})}{J}\right| \le -\frac{a}{4}D + 2 ca\sqrt{D} \le c'a
    \end{align*}
    for some constant $c'$ independent of $a$. By further shrinking $a$ if necessary such that $c'a \le \lambda/4$, we have the uniform drift $\gL W_a/W_a \le -\lambda/2$ here.

    \textit{Case 2.} $\theta \ge a^2$. From \labelcref{appeq:DandPBound},
    \begin{equation*}
        D \ge cT^2 S^\ell = c\theta^2 S^{\ell+2} \ge ca^4 S^{\ell+2}.
    \end{equation*}
    In particular, $D \rightarrow \infty$ as $S \rightarrow \infty$. Then the remaining terms
    \begin{align*}
        -\frac{c_a}{2} D + 2 \left|\frac{\Tr(KC^{-1})}{J}\right| \le - \frac{c_a}{2} D + 2ac\sqrt{D}.
    \end{align*}
    This is bounded above by $\lambda/4$ for sufficiently large $S$. Therefore $\gL W_a/W_a \le -\lambda/2$.
    
    \textit{Case 3.} $S$ is bounded, say $S \le S_0$. From \labelcref{appeq:trkcinv}, we have the lower bound for some constant $C_{S_0}$ depending on $S_0$, 
    \begin{align*}
        \frac{\gL W_a}{W_a} \le -\frac{3\lambda}{4} + 2 C_{S_0} J^{-1/2}
    \end{align*}
    which is bounded by $-\lambda/2$ for sufficiently large $J \ge J_0$.
    
    The remaining set is of the form $\{S \le S_0, J \le J_0\}$, which is compact and bounded in $\mathrm{M}$. The norm-like condition on $W_a$ follows similarly to before from coercivity of $f$. A similar argument to \Cref{thm:ctsGeoErgo} shows the desired Foster--Lyapunov condition.
\end{proof}

\end{appendix}

\bibliographystyle{plain} 
\bibliography{refs}       

\end{document}